\begin{document}

\title{Interpolation and vector bundles on curves}
\author{Atanas Atanasov}
\date{}

\maketitle

\begin{abstract}
  We define several notions of interpolation for vector bundles on curves and discuss their relation to slope stability. The heart of the paper demonstrates how to use degeneration arguments to prove interpolation. We use these ideas to show that a general connected space curve of degree $d$ and genus $g$ satisfies interpolation for $d \geq g+3$ unless $d = 5$ and $g = 2$. As a second application, we show that a general elliptic curve of degree $d$ in $\Pbb^n$ satisfies weak interpolation when $d \geq 7$, $d \geq n+1$, and the remainder of $2d$ modulo $n-1$ lies between $3$ and $n-2$ inclusive. Finally, we prove that interpolation is equivalent to the---a priori stricter---notion of strong interpolation. This is useful if we are interested in incidence conditions given by higher dimensional linear spaces.  
\end{abstract}


\tableofcontents


\section{Introduction}
\label{S:introduction}

The study of moduli spaces of curves is a very rich branch of algebraic geometry. There is a long history of interaction between curves in the abstract (e.g., the Deligne-Mumford compactification $\overline{\Mc}_g$) and concrete realizations of curves (e.g., Hilbert schemes and Kontsevich spaces). Understanding of the geometry on one side could lead to a result on the other and vice versa. For example, many results about the birational geometry of $\overline{\Mc}_g$ use trigonal curves or curves on K3s and other surfaces. Interpolation properties help us understand subvarieties of the Hilbert scheme given by incidence conditions. We will use this section to give a more precise meaning of this statement and explain how these ideas can eventually be used to study the birational geometry of $\overline{\Mc}_g$.

Our study of interpolation was inspired by an interest in constructing moving curves in $\overline{\Mc}_g$. Harris-Morrison \cite{Harris-Morrison-slopes} produced such curves by studying ramified covers over $\Pbb^1$ and used them to bound the slopes of effective divisors on $\overline{\Mc}_g$. Fedorchuk \cite{Fedorchuk-thesis} continued this line of work by using curves in $\Pbb^2$. A thorough overview of this topic can be found in \cite{Chen-Farkas-Morrison}. Instead of attacking the construction question directly, our goal is to create a broader framework for studying the deformation theory of curves in projective space. Constructing moving curves is only one of several possible applications.

We proceed to outline how Hilbert schemes of curves could be used to produce moving curves in $\overline{\Mc}_g$ and motivate the definition of interpolation. The Hilbert scheme $\Hc_{d,g,n}$ parametrizes degree $d$ and arithmetic genus $g$ curves in $\Pbb^n$. The restricted Hilbert scheme $\Ic_{d,g,n}$ is defined as the closure of the locus of smooth irreducible curves in $\Hc_{d,g,n}$. For every triple $(d,g,n)$ there is a rational map $\phi_{d,g,n} \cn \Ic_{d,g,n} \drarr \overline{\Mc}_g$ from the restricted Hilbert scheme to the Deligne-Mumford moduli space of curves. Given a fixed $g$, it is always possible to find $d$ and $n$ such that $\phi_{d,g,n}$ is dominant. If we can construct a moving curve in $\Ic_{d,g,n}$ which does not map to a point, then it maps to a moving curve in $\overline{\Mc}_g$.

Let $C \subset \Pbb^n$ be a smooth irreducible curve and $[C] \in \Ic_{d,g,n}$ the corresponding point. The tangent space $T_{[C]} \Ic_{d,g,n}$ can be identified with $\H^0(N_C)$. If $\H^1(N_C) = 0$, then $[C]$ is a smooth point of $\Ic_{d,g,n}$. Pick a point $p \in C$ and consider the subvariety $\Jc(p) \subset \Ic_{d,g,n}$ consisting of all curves containing $p$. By construction $[C] \in \Jc(p)$ and we can identify the tangent space $T_{[C]}\Jc(p)$ with $\H^0(N_C(-p))$. Since the normal bundle $N_C$ has rank $n-1$, the expectation is that $\Jc(p)$ has codimension $n-1$ in $\Ic_{d,g,n}$. After choosing a second point $q \in C \setminus \{p\}$, we can define analogously the variety $\Jc(p,q)$ which satisfies $T_{[C]} \Jc(p,q) \cong \H^0(N_C(-p-q))$ and the process continues. The idea is to use just enough points so that $\Jc(p_1, \dots, p_m)$ is a curve. By allowing the $p_i$ to vary, we have constructed a moving curve in $\Ic_{d,g,n}$.

While very instructive, there are several issues with this naive idea. First, we would expect $\Jc(p_i)$ to be a curve only if $\H^0(N_C) - 1$ is a multiple of $n-1$. Fortunately, there is a simple generalization which circumvents this restriction. Suppose $L \subset \Pbb^n$ is a linear space which intersects $C$ transversely in a point $p$. Consider the space $\Jc(L)$ of curves incident to $L$, that is, they intersect $L$ nontrivially. The tangent space $T_{[C]} \Jc(L)$ can be identified with $\H^0(N')$ where $N'$ is a vector bundle defined by the short exact sequence
\[\xymatrix{
  0 \ar[r] &
  N' \ar[r] &
  N_C \ar[r] &
  N_{C,p} / [T_p L] \ar[r] &
  0.
}\]
The third term is the quotient of $N_{C, p}$ by $(T_p L + T_p C)/T_p C$, the image of $T_p L$ in $N_{C,p}$. We expect that $\Jc(L)$ has codimension
\[
\dim N_{C,p}/[T_pL] =
n - 1 - \dim L.
\]
The construction extends to multiple linear spaces $L_1, \dots, L_m$ and the tangent space to $\Jc(L_1, \dots, L_m)$ at $[C]$ is controlled by a vector bundle $N' \subset N_C$ whose cokernel is supported on $C \cap \bigcup_i L_i$. We can always choose multiple points and a single higher dimensional linear space such that the incident curves form a variety whose expected dimension is $1$.

The second, and more pressing, issue lies with the difference between expectation and reality. Computing the actual dimension of a variety $\Jc(L_1, \dots, L_m)$ often requires some significant understanding of the curves we are dealing with and their deformation theory. Intuitively speaking, the notion of interpolation formalizes the expected behavior we have been discussing.

Given a triple of integers $d$, $g$ and $n$, we ask whether the general smooth curve in $\Ic_{d,g,n}$ satisfies interpolation. The goal of this paper is to develop the notion of interpolation rigorously and demonstrate how to use degeneration techniques in order to approach our question for certain triples $(d,g,n)$. For example, the union of a rational curve and a secant line can be treated as an elliptic curve. This degeneration plays a key role in our proof that a general elliptic curve with $d \geq n + 1$ and $d \geq 7$ satisfies interpolation under an additional congruence condition. More generally, the addition of $g$ secant lines to a rational curve produces a genus $g$ curve. We combine this idea with a careful analysis of normal bundles to show that a general smooth curve in $\Pbb^3$ satisfies interpolation as long as $d \geq g + 3$ and $(d,g) \neq (5,2)$.

\begin{named}{Summary}
  \Cref{S:elementary-modifications} introduces elementary modifications which are later used in the definition of interpolation. In \cref{S:interpolation-properties}, we discuss the relation between interpolation, slope stability, and section stability. We illustrate these notions using vector bundles on the projective line. Once the basic language is established, we discuss the use of degenerate curves in interpolation arguments. \Cref{S:nodal-curves} establishes two results which respectively allow us to attach two curves in a single point and attach a secant line to a curve. We show interpolation is an open property and make other remarks about Hilbert schemes in \cref{S:Hilbert-schemes}. The framework we constructed is applied to elliptic curves in \cref{S:elliptic-curves} and curves in $\Pbb^3$ in \cref{S:curves-in-P3}. The birational geometry of Hirzebruch surfaces plays a central role in our study of space curves. Finally, we prove the equivalence of strong and regular interpolation in section \cref{S:strong-regular}.
\end{named}

\begin{named}{Conventions}
  Unless otherwise noted, we will consistently make the following conventions.
  \begin{itemize}
  \item
    We will work over an algebraically closed field $K$ of characteristic $0$.
  \item
    All varieties are reduced, separated, finite type schemes over $K$.
  \item
    All curves are connected and locally complete intersection (lci); all families of curves have connected lci fibers.
  \item
    All vector bundles are locally free sheaves of finite constant rank.
  \item
    A subbundle refers to a vector subbundle with locally free quotient.
  \item
    All divisors are Cartier.
  \item
    We will call a vector bundle nonspecial if it has no higher cohomology.
  \end{itemize}
\end{named}

\begin{named}{Acknowledgments}
  First and foremost, I would like to thank Joe Harris who introduced me to the unparalleled world of algebraic geometry. He suggested a number of guiding questions which led to this work.

  I am grateful to Gabriel Bujokas, Alex Perry, and Eric Larson who read an early draft, and provided numerous insightful comments. Thanks also go to Edoardo Ballico who made several constructive suggestions after reading a previous version of the paper.

  I would also like to acknowledge the generous support of National Science Foundation grant DMS-1308244, for ``Nonlinear Analysis on Sympletic, Complex Manifolds, General Relativity, and Graphs.''
\end{named}


\section{Elementary modifications of vector bundles}
\label{S:elementary-modifications}

Let $E$ be a vector bundle on an algebraic variety $X$. Consider a quotient of the form
\[\xymatrix{
  E \ar[r]^-{q} &
  Q \ar[r] &
  0
}\]
where $Q$ is a locally free $\Oc_D$-sheaf for a Cartier divisor $D \subset X$.  We will refer to the kernel of $q$ as the \emph{(elementary) modification of $E$ corresponding to $q$} and denote it by $M(E, q)$. It can be shown that all such modifications are vector bundles.

While the definition of an elementary modification is quite general, the applications we care about assume $X = C$ is a curve and $D = \{ p_1, \dots, p_m \}$ is a reduced Cartier divisor. If we take $Q = E|_D = \bigoplus_i E_{p_i}$, then
\[
M(E, q) = E(-D).
\]
Following the long exact sequence in cohomology, the global sections of this bundle admit a convenient presentation in terms of the global sections of $E$:
\begin{align*}
  \H^0(E(-D))
  &= \{ \sigma \in \H^0(E) \;|\; \sigma|_D = 0 \} \\
  &= \{ \sigma \in \H^0(E) \;|\; \sigma(p_i) = 0 \textrm{ for all $i$} \}.
\end{align*}

In a generalization of the above example, we keep $D = \{ p_1, \dots, p_m \}$ and consider a subspace $V_i \subset E_{p_i}$ for each $1 \leq i \leq m$. We can assemble a sheaf by taking the quotient by $V_i$ at each of the corresponding fibers:
\[
Q = \bigoplus_i E_{p_i} / V_i.
\]
By composing the evaluation at $D$ with the natural quotient morphism
\[\xymatrix{
  E \ar[r] &
  \bigoplus_i E_{p_i} \ar[r] &
  Q = \bigoplus_i E_{p_i}/V_i,
}\]
we construct a modification which we will denote by $M(E, V_i) = M(E, V_1, \dots, V_m)$. The points $p_i$ are implicitly understood. Its global sections admit an analogous presentation:
\begin{align*}
  \H^0(M(E, V_i))
  &= \{ \sigma \in \H^0(E) \;|\; \sigma(p_i) \in V_i \textrm{ for all $i$} \}.
\end{align*}
Setting all $V_i = 0$ recovers $E(-D)$ as above.

\begin{remark}
  \label{T:Cartier-smooth-locus}
  Note that we require the divisor $D \subset C$ to be reduced and Cartier. This means that $D$ is contained in the smooth locus $C_\sm$ of $C$.
\end{remark}


\section{Interpolation and related properties}
\label{S:interpolation-properties}

Let $C$ be a curve and $E$ a nonspecial vector bundle on it. Consider a point $p \in C_\sm$ and a subspace of the fiber $V \subset E_p$. Comparing $\H^0(M(E, V))$ with $\H^0(E)$, we expect the dimension of the former is $\codim(V, E_p)$ less than the dimension of the latter. However, this is not always the case. For example, if $\H^0(E) = 0$, it is impossible to have $\h^0(M(E, V)) < 0$. More generally, one has to look at the long exact sequence
\[\xymatrix{
  0 \ar[r] &
  \H^0(M(E, V)) \ar[r] &
  \H^0(E) \ar[r] &
  E_p/V \ar[r] &
  \H^1(M(E, V)) \ar[r] &
  0.
}\]

We are interested in studying the conditions under which the expected dimension is achieved. It may be the case that a specific pair $(p, V)$ fails to satisfy the proposed condition but that behavior is isolated. It is more meaningful to ask the same question for a general point $p \in C_\sm$ and a general subspace $V \subset E_p$ of a fixed dimension. The notion of interpolation formalizes this idea.

Fix a curve $C$ and a nonspecial vector bundle $E$ on it. Let
\[
\lambda = (\lambda_1, \dots, \lambda_m)
\]
be a weakly decreasing sequence of integers such that $0 \leq \lambda_i \leq \rank E$ for all $i$, and $\sum_i \lambda_i \leq \h^0(E)$. Let $\Delta \subset C_\sm^m$ denote the big diagonal. A point of the Grassmannian bundle
\[\xymatrix{
  \prod_i \Gr(\rank E - \lambda_i, E) \ar[d] \\
  C_\sm^m \setminus \Delta
}\]
corresponds to the datum
\[
( p_i \in C_\sm, V_i \in \Gr(\rank E - \lambda_i, E_{p_i}) )
\]
such that all $p_i$ are distinct. Given this information, we can construct an elementary modification $M(E, V_i)$.

\begin{definition}[$\lambda$-interpolation]
  \label{T:interpolation-lambda}
  We say $E$ satisfies \emph{$\lambda$-interpolation} if it is nonspecial and there exists a point on the above Grassmannian bundle such that the corresponding elementary modification $E'$ satisfies
  \[
  \h^0(E) - \h^0(E') = \sum_i \lambda_i.
  \]
\end{definition}

\begin{remark}
  \label{T:smooth-components}
  Note that the definition of interpolation requires the inequality to hold for only a single point of the Grassmannian bundle. By upper semi-continuity of $\h^0$, the notion of $\lambda$-interpolation is not strengthened if we require the equality $\h^0(E) - \h^0(E') = \sum_i \lambda_i$ to hold for a general point of one of the components of the Grassmannian bundle.

  The (connected) components of the Grassmannian bundle correspond to (connected) components of the base $C_\sm^m \setminus \Delta$. If the curve $C$ is reducible, then $C_\sm$ is not connected.
\end{remark}

\begin{definition}[Interpolation properties]
  \label{T:interpolation-properties}
  Let us write $\h^0(E) = q \cdot \rank E + r$ where $0 \leq r < \rank E$. The following table describes, in increasing strength, all interpolation-type properties we will use.
  \begin{center}
    \begin{tabular}{ll}
      \toprule
      $\lambda$ & Term \\
      \midrule
      $((\rank E)^q)$ & weak interpolation \\
      $((\rank E)^q, \lambda_{\textrm{tail}})$ & weak interpolation with tail $\lambda_{\textrm{tail}}$ \\
      $((\rank E)^q, r)$ & (regular) interpolation \\
      all admissible tuples $\lambda$ & strong interpolation \\
      \bottomrule
    \end{tabular}
  \end{center}
\end{definition}

The notion of section stability is related to interpolation.

\begin{definition}[Section stability]
  We call a vector bundle $E$ \emph{section-semistable} if all subbundles $F \subset E$ satisfy the inequality
  \[
  \frac{\h^0(F)}{\rank F} \leq \frac{\h^0(E)}{\rank E}.
  \]
  We will further call $E$ \emph{section-stable} if the strict version of the inequality above holds for all proper $F \subset E$.
\end{definition}

Contrast the notion of section stability with the more classical slope stability.

\begin{definition}[Slope stability]
  A vector bundle $E$ is called \emph{slope-semistable} if all subbundles $F \subset E$ satisfy
  \[
  \frac{c_1(F)}{\rank F} \leq \frac{c_1(E)}{\rank E}.
  \]
  If the strict version of the inequality holds for all proper $F \subset E$, then we call $E$ \emph{slope-stable}.

  The slope of a vector bundle refers to the ratio
  \[
  \mu(E) = \frac{c_1(E)}{\rank E}.
  \]
\end{definition}

\begin{remark}
  As a simple application of Riemann-Roch, we can replace $c_1$ in the definition of slope stability with the Euler characteristic $\chi$ without changing its meaning.
\end{remark}

\begin{remark}
  Let $C$ be a curve embedded in a smooth ambient space $X$. We will use its normal bundle $N_{C/X}$ to extend all properties we defined so far to $C$. For example, we will say that $C$ satisfies interpolation if its normal bundle does so.
\end{remark}

\begin{lemma}
  \label{T:h1-section-slope}
  If $E$ satisfies section (semi)stability and $\h^1(E) = 0$, then $E$ also satisfies slope (semi)stability.
\end{lemma}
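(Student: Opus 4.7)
The plan is to pass from $c_1$ to the Euler characteristic via Riemann--Roch, and then exploit the sandwich $\chi(F) \leq \h^0(F)$ and $\chi(E) = \h^0(E)$ that is available because $\h^1(E) = 0$.

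First, I invoke the remark immediately preceding the lemma: by Riemann--Roch on a curve, $\chi(F) = c_1(F) + \rank(F)(1-g)$, so comparing the ratios $c_1/\rank$ is the same as comparing the ratios $\chi/\rank$, up to the common additive constant $1-g$. Thus slope (semi)stability is equivalent to the assertion that $\chi(F)/\rank F \leq \chi(E)/\rank E$ (resp.\ with strict inequality) for every (proper) subbundle $F \subset E$.

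Next, I use the cohomological hypothesis. Since $\h^1(E) = 0$, we have $\chi(E) = \h^0(E)$, so
\[
\frac{\chi(E)}{\rank E} = \frac{\h^0(E)}{\rank E}.
\]
For any subbundle $F \subset E$, we have $\chi(F) = \h^0(F) - \h^1(F) \leq \h^0(F)$, which gives
\[
\frac{\chi(F)}{\rank F} \leq \frac{\h^0(F)}{\rank F}.
\]

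The lemma now follows by chaining these with the section-stability hypothesis: for the semistable case,
\[
\frac{\chi(F)}{\rank F} \leq \frac{\h^0(F)}{\rank F} \leq \frac{\h^0(E)}{\rank E} = \frac{\chi(E)}{\rank E},
\]
and for the stable case the middle inequality is strict (applied to proper $F$), so the composite inequality is strict as well. There is no real obstacle here; the only thing to be careful about is making sure that strictness in section-stability transfers cleanly to strictness in slope-stability, which it does because the outer two inequalities are non-strict while the hypothesis supplies the strict one in the middle.
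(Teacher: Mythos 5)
Your proof is correct and is essentially the paper's own argument: the paper's one-line proof is exactly the chain $\chi(F)/\rank F \leq \h^0(F)/\rank F \leq \h^0(E)/\rank E = \chi(E)/\rank E$, with the identification of slope stability in terms of $\chi$ via Riemann--Roch left implicit. Your write-up just makes each link of that chain, and the transfer of strictness, explicit.
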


\begin{proof}
  The result is immediate from the inequality
  \[
  \frac{\chi(F)}{\rank F} \leq
  \frac{\h^0(F)}{\rank F} \leq
  \frac{\h^0(E)}{\rank E} =
  \frac{\chi(E)}{\rank E}.
  \qedhere
  \]
\end{proof}

Our next goal is to illustrate the definitions of interpolation, slope stability, and section stability using $\Pbb^1$.

\begin{example}[Vector bundles on $\Pbb^1$]
  Vector bundles on the projective line are simple enough that we can characterize their stability and interpolation properties very concretely. Let us fix a rank $n$ vector bundle $E = \bigoplus_{j=1}^n \Oc_{\Pbb^1}(a_j)$.

  Without loss of generality, we assume the $a_j$ are in weakly decreasing order
  \[
  a_1 \geq \cdots \geq a_n.
  \]
  Considering the subbundle $F = \bigoplus_{j < n} \Oc_{\Pbb^1}(a_j) \subset E$, we see that slope semistability implies all $a_j$ are equal. It is easy to show this is a necessary and sufficient condition. Furthermore, the only slope-stable vector bundles have rank $1$.

  Section semistability is equally easy to analyze. Since
  \[
  \h^0(\Oc_{\Pbb^1}(a)) = \max\{ 1 + a, 0 \},
  \]
  the condition we are interested in can be stated as follows: for all subsets $J \subset \{ 1, \dots, n \}$, we have
  \[
  \frac{1}{|J|} \sum_{j \in J} \max\{ a_j + 1, 0 \} \leq
  \frac{1}{n} \sum_j \max\{ a_j + 1, 0 \}.
  \]
  If $E$ has sections and there exists $j'$ such that $a_{j'} < -1$, taking $J = \{ j \neq j' \}$ would violate the inequality. It follows that $a_j \geq -1$ for all $j$, and then we can replace $\h^0(E)$ with the Euler characteristic $\chi(E)$. By Riemann-Roch, this is equivalent to slope semistability. In conclusion, $E = \bigoplus_j \Oc_{\Pbb^1}(a_j)$ is section-semistable if and only if all $a_j$ are no greater than $-1$ or all $a_j$ are equal and no less than $-1$. Equivalently, we can also say that either $E$ has no sections or $E$ is slope-semistable and $\H^1(E) = 0$.
  
  We are ready to analyze interpolation for bundles on $\Pbb^1$. Assume that $\h^1(E) = 0$ which is equivalent to $a_j \geq -1$ for all $j$. We note that a general $(\lambda_1, \dots, \lambda_m)$-modification is the same as a general $(\lambda_2, \dots, \lambda_m)$-modification performed on a general $(\lambda_1)$-modification of $E$.

  Let us consider the $a_j$ in $E = \bigoplus_j \Oc_{\Pbb^1}(a_j)$ as a tableau of height $\rank E$. A general $(\lambda_1)$-modification of $E$ has the effect of removing a box from each of the top $\lambda_1$ rows. Consider the following example with $\lambda_1 = 3$.
  \[
  \young(\ysp\ysp\ysp\x,\ysp\ysp\x,\ysp\x,\ysp\ysp)
  \]
  Note that we may need to rearrange the rows of the result to arrive at a tableau. The number of sections drops by $\lambda_1$ as long as no row in the new tableau has value less than $-1$. The vector bundle $E$ satisfies $(\lambda_1)$-interpolation as long as all $a_j \geq 0$ for $1 \leq j \leq \lambda_1$.

  There is another way to state the condition for $(\lambda_1)$-interpolation. Consider the tableau $(b_i) = (a_j + 1)^T$, that is, we add a column of width $1$ and height $n = \rank E$, and then take the transpose of the result. We illustrate this operation through an example; the new cells are marked with bullet points.
  \begin{align*}
    (a_j) &= \yng(3,3,2,2,2) &
    (b_i) = (a_j + 1)^T &=
    \young(\bullet\bullet\bullet\bullet\bullet,\ysp\ysp\ysp\ysp\ysp,\ysp\ysp\ysp\ysp\ysp,\ysp\ysp)
  \end{align*}
  In these terms $(\lambda_1)$-interpolation is equivalent to the inequality $b_1 \geq \lambda_1$. Extending our logic, it is not too hard to state a condition for $(\lambda_1, \lambda_2)$-interpolation. It turns out we need two inequalities: $\lambda_1 \leq b_1$ and $\lambda_1 + \lambda_2 \leq b_1 + b_2$. Using induction, one can show that $E$ is $\lambda$-stable if and only if
  \[
  \sum_{j' \leq j} \lambda_{j'} \leq
  \sum_{j' \leq j} b_{j'}
  \]
  for all $1 \leq j \leq \rank E$.  

  Our analysis suggests we should introduce the following partial order.
  \begin{definition}[Dominance order]
    Let $\lambda = (\lambda_i)$ and $\lambda' = (\lambda_i')$ be two tableaux. If $\sum_{j \leq i} \lambda_j' \leq \sum_{j \leq i} \lambda_j$ for all $i$, we will say that $\lambda'$ is \emph{dominated} by $\lambda$ and write $\lambda' \trianglelefteq \lambda$.
  \end{definition}

  We will later see that $\lambda$-interpolation implies $\lambda'$-interpolation if $\lambda' \trianglelefteq \lambda$ (see \cref{T:interpolation-implication}). For now, we restate our result for bundles over $\Pbb^1$.
  \begin{proposition}
    \label{T:interpolation-P1}
    Let $E = \bigoplus \Oc_{\Pbb^1}(a_j)$ and $a = (a_j)$. The vector bundle $E$ satisfies $\lambda$-interpolation if and only if $\lambda \trianglelefteq (1+a)^T$.
  \end{proposition}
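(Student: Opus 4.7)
The plan is to induct on the length $m$ of $\lambda$, combining an explicit description of a general $(\lambda_1)$-elementary modification of $E$ with a combinatorial lemma relating the dominance conditions before and after such a modification. Throughout write $b = (1+a)^T$ and recall the standing assumption $a_1 \geq \cdots \geq a_n \geq -1$.

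The first step is to identify a general $(\lambda_1)$-modification explicitly: assuming $1 \leq \lambda_1 \leq b_1$, for a general pair $(p, V)$ with $V \subset E_p$ of codimension $\lambda_1$, the modification $M(E, V)$ is isomorphic to $\bigoplus_j \Oc_{\Pbb^1}(\tilde a_j)$, where $\tilde a_j = a_j - 1$ for $j \leq \lambda_1$ and $\tilde a_j = a_j$ otherwise. I would verify this by computing $\h^0(M(E, V)(-m))$ for every $m \in \Zbb$ via the long exact sequence of the defining short exact sequence twisted by $\Oc(-m)$. The image of the evaluation $\H^0(E(-m)) \to E_p$ is the $b_{m+1}$-dimensional subspace $\bigoplus_{a_j \geq m} \Oc(a_j)_p$, so for general $V$ its image in $E_p/V$ has dimension $\min(\lambda_1, b_{m+1})$; a direct count shows this matches $\h^0\bigl(\bigoplus_j \Oc(\tilde a_j - m)\bigr)$, and Grothendieck's splitting theorem on $\Pbb^1$ forces the claimed isomorphism.

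The inductive step then follows: as already observed, a general $\lambda$-modification may be realized as a general $(\lambda_2, \dots, \lambda_m)$-modification of a general $(\lambda_1)$-modification, so $E$ satisfies $\lambda$-interpolation if and only if $\lambda_1 \leq b_1$ and $E' := \bigoplus_j \Oc(\tilde a_j)$ satisfies $(\lambda_2, \dots, \lambda_m)$-interpolation. By the inductive hypothesis this last condition is equivalent to $(\lambda_2, \dots, \lambda_m) \trianglelefteq b'$, where $b' = (1+\tilde a)^T$. It therefore suffices to prove the purely combinatorial claim that $\lambda \trianglelefteq b$ is equivalent to ``$\lambda_1 \leq b_1$ and $(\lambda_2, \dots, \lambda_m) \trianglelefteq b'$.'' Setting $s_J = \sum_{i \leq J} b_i$ and $s'_J = \sum_{i \leq J} b'_i$, a direct count on Young diagrams yields the key identity $s_J - s'_{J-1} = \max(\lambda_1, b_J)$ for $J \geq 1$, so the condition $(\lambda_2, \dots, \lambda_m) \trianglelefteq b'$ at index $J-1$ becomes $\sum_{i \leq J} \lambda_i \leq s_J - \max(0, b_J - \lambda_1)$. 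When $b_J \leq \lambda_1$ this is exactly $\lambda \trianglelefteq b$ at index $J$; when $b_J > \lambda_1$ it reduces to $\sum_{i \leq J} \lambda_i \leq \lambda_1 + s_{J-1}$, which follows from $\lambda \trianglelefteq b$ at index $J-1$ together with $\lambda_J \leq \lambda_1$.

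The main obstacle I anticipate is this final combinatorial step: the case split $b_J \leq \lambda_1$ versus $b_J > \lambda_1$ appears unavoidable, and the reduction in the second case depends crucially on $\lambda_J \leq \lambda_1$, i.e., on the weakly decreasing hypothesis on $\lambda$. Without this hypothesis $\lambda \trianglelefteq b$ would generally be strictly weaker than the inductive condition, and the induction would collapse.
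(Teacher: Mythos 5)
Your proposal is correct and follows essentially the same route as the paper, which only sketches this argument inside the $\Pbb^1$ example: peel off a general $(\lambda_1)$-modification, observe it removes a box from each of the top $\lambda_1$ rows of the tableau $(a_j)$, and induct on the length of $\lambda$. You supply the two details the paper omits — the verification of the splitting type via $\h^0(M(E,V)(-m))$ for all twists together with Grothendieck's theorem, and the combinatorial identity $s_J - s'_{J-1} = \max(\lambda_1, b_J)$ driving the induction — and both check out.
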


  We are now ready to analyze weak, regular and strong interpolation. As before, we are assuming $\h^1(E) = 0$. Start by expressing $\h^0(E)$ as
  \[
  \h^0(E) =
  n + \sum_j a_j =
  n \cdot q + r
  \]
  where $0 \leq r < n$. Weak interpolation means that $E$ satisfies $(n^q)$-interpolation, so $a_j + 1 \geq q$ for all $j$. More strongly, regular interpolation is defined as $(n^q, r)$-interpolation which implies
  \[
  a_j =
  \begin{cases}
    q & \textrm{if } 1 \leq j \leq r, \\
    q - 1 & \textrm{if } r+1 \leq j \leq n.
  \end{cases}
  \]
  In other words, $E$ is as balanced as possible. It is not hard to check that strong interpolation does not impose any additional constraints on $E$.

  \begin{remark}
    \label{T:interpolation-distinct}
    Vector bundles on $\Pbb^1$ lead to another interesting observation, namely, that $\lambda$- and $\lambda'$-interpolation are equivalent notions if and only if $\lambda = \lambda'$. It is not difficult to use this strategy and construct similar examples over a curve of any genus.
  \end{remark}

  This completes our account of interpolation properties over $\Pbb^1$. We return to the general case in order to prove several other statements.
\end{example}

\begin{proposition}[Reducing $\lambda$]
  \label{T:reducing-lambda}
  If $\lambda$ and $\lambda'$ are such that $\lambda_i \geq \lambda_i'$ for all $i$, then $\lambda$-interpolation implies $\lambda'$-interpolation.
\end{proposition}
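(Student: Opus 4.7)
The plan is to build witnesses for $\lambda'$-interpolation directly from the witnesses that $\lambda$-interpolation provides, by enlarging the subspaces used in the elementary modification. The key observation is that the numerical equality in \cref{T:interpolation-lambda} is equivalent, via the long exact sequence in cohomology, to surjectivity of the evaluation map $\H^0(E) \to \bigoplus_i E_{p_i}/V_i$, and surjectivity is preserved when we pass to a further quotient of the target.

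In detail, I would start by applying $\lambda$-interpolation to produce distinct points $p_1, \dots, p_m \in C_\sm$ and subspaces $V_i \subset E_{p_i}$ with $\codim(V_i, E_{p_i}) = \lambda_i$ such that the modification $E' = M(E, V_i)$ satisfies $\h^0(E) - \h^0(E') = \sum_i \lambda_i$. After possibly padding $\lambda'$ with zeros so that it has the same length as $\lambda$ (a tail of zeros corresponds to trivial modifications and does not change the bundle), the hypothesis $\lambda_i' \leq \lambda_i$ allows me to choose subspaces $V_i' \subset E_{p_i}$ with $V_i \subset V_i'$ and $\codim(V_i', E_{p_i}) = \lambda_i'$. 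Set $E'' = M(E, V_i')$.

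The inclusions $V_i \subset V_i'$ give a commutative diagram of short exact sequences in which the right-hand vertical map is the surjection $\bigoplus_i E_{p_i}/V_i \twoheadrightarrow \bigoplus_i E_{p_i}/V_i'$. Taking the long exact sequence in cohomology for $E''$, together with $\H^1(E) = 0$ (from nonspeciality of $E$), reduces the claim to showing that $\H^0(E) \to \bigoplus_i E_{p_i}/V_i'$ is surjective. But this map factors through $\H^0(E) \to \bigoplus_i E_{p_i}/V_i$, which is surjective by the $\lambda$-interpolation hypothesis; composing with a surjection preserves surjectivity. Hence $\h^0(E) - \h^0(E'') = \sum_i \lambda_i'$, and the $(p_i, V_i')$ witness $\lambda'$-interpolation.

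There is essentially no main obstacle here; the only mild subtlety is the bookkeeping of matching the lengths of $\lambda$ and $\lambda'$ and recording that $E''$ is automatically nonspecial (which also drops out of the same long exact sequence, since surjectivity of $\H^0(E) \to \bigoplus_i E_{p_i}/V_i'$ forces $\H^1(E'') = 0$).
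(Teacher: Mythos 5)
Your proof is correct and follows essentially the same route as the paper: both arguments enlarge the witnesses $V_i$ to superspaces $V_i'$ of codimension $\lambda_i'$ at the same points and deduce that the larger modification still drops $\h^0$ by the full amount. The paper packages this via a Snake Lemma diagram and the vanishing of $\H^1$ along the column $0 \to M(E,V_i) \to M(E,V_i') \to \bigoplus_i V_i'/V_i \to 0$, which is just another phrasing of your observation that surjectivity of $\H^0(E) \to \bigoplus_i E_{p_i}/V_i$ persists after composing with the further quotient.
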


\begin{proof}
  Consider a vector bundle $E$ on $C$ which satisfies $\lambda$-interpolation. Suppose the interpolation condition holds for a specific set of points $p_i \in C$ and subspaces $V_i \subset E_{p_i}$. For each $i$, we pick a superspace $V_i \subset V_i' \subset E_{p_i}$ such that $\dim V_i' = \rank E - \lambda_i'$. The Snake Lemma implies the modifications $F = M(E, V_i)$ and $F' = M(E, V_i')$ fit in the following diagram.
  \[\xymatrix{
    &&& 0 \ar[d] \\
    & 0 \ar[d] && \bigoplus_i V_i'/V_i \ar[d] \\
    0 \ar[r] & F \ar[r] \ar[d] & E \ar[r] \ar@{=}[d] & \bigoplus_i E_{p_i}/V_i \ar[r] \ar[d] & 0 \\
    0 \ar[r] & F' \ar[r] \ar[d] & E \ar[r] & \bigoplus_i E_{p_i}/V_i' \ar[r] \ar[d] & 0 \\
    & \bigoplus_i V_i'/V_i \ar[d] && 0 \\
    & 0
  }\]
  The $\lambda$-interpolation of $E$ implies $\h^1(F) = \h^1(E) = 0$. The long exact sequence of the first column implies $\h^1(F') = 0$. Putting these observations together, we get
  \begin{align*}
    \h^0(F')
    &=
    \h^0(E) - \dim \bigoplus_i E_{p_i}/V_i' =
    \h^0(E) - \sum_i \lambda_i'.
    \qedhere
  \end{align*}
\end{proof}

\begin{proposition}
  \label{T:interpolation-inequality}
  Let $E$ be a vector bundle for which we write $n = \rank E$ and $\h^0(E) = n \cdot q + r$ for $0 \leq r < n$. Suppose $E$ satisfies weak interpolation. If $F \subset E$ is a subbundle, then
  \[
  0 \leq
  \frac{\h^0(F)}{\rank F} \leq
  \frac{\h^0(E)}{\rank E} + r \left( \frac{1}{\rank F} - \frac{1}{\rank E} \right).
  \]
  If $E$ further satisfies interpolation, then
  \[
  0 \leq
  \frac{\h^0(F)}{\rank F} \leq
  \frac{\h^0(E)}{\rank E} + \min\left\{ 1, \frac{r}{\rank F} \right\} - \frac{r}{\rank E}.
  \]
\end{proposition}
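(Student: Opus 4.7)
The lower bound $\h^0(F)/\rank F \geq 0$ is trivial, so the content lies entirely in the upper bound. The plan is to intersect the subbundle $F$ with an elementary modification $E'$ of $E$ provided by the interpolation hypothesis, so that $F \cap E'$ is a subsheaf of $E'$ whose $\h^0$ is controlled by $\h^0(E')$, while the cokernel $F/(F \cap E')$ is torsion of length computable directly from the linear-algebra data of the modification. This reduces the problem to two routine steps: quoting the section count of $E'$ from interpolation, and computing the length of the cokernel in terms of $\rank F$ and $r$.

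For the weak interpolation inequality, the natural choice is $E' = E(-D)$ where $D = p_1 + \cdots + p_q$ comes from $(n^q)$-interpolation, so that $\h^0(E') = r$. The intersection $F \cap E'$ is just $F(-D)$, which satisfies $\h^0(F(-D)) \leq \h^0(E') = r$, and the exact sequence $0 \to F(-D) \to F \to F|_D \to 0$ gives $\h^0(F) \leq r + q \cdot \rank F$. Substituting $q = (\h^0(E) - r)/n$ and dividing by $\rank F$ rearranges to the stated bound.

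For the stronger inequality under regular interpolation, the right modification is $E'' = M(E, 0, \ldots, 0, V)$ with $V \subset E_{p_{q+1}}$ of codimension $r$, so that $\h^0(E'') = 0$. Because $F \subset E$ is a subbundle, the image of $F$ in $\bigoplus_i E_{p_i}/V_i$ splits as a direct sum whose length is
\[
q \cdot \rank F + \dim \bigl( F_{p_{q+1}}/(F_{p_{q+1}} \cap V) \bigr).
\]
The key observation is that the last summand is bounded by $\min(\rank F, r)$: by $\rank F$ trivially, and by $r$ because that is the codimension of $V$ in the ambient fiber $E_{p_{q+1}}$. Combined with $\h^0(F \cap E'') \leq \h^0(E'') = 0$, this yields $\h^0(F) \leq q \cdot \rank F + \min(\rank F, r)$, which rearranges into the claimed form.

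The argument is essentially mechanical once the modifications are identified; the only step that deserves attention is the $\min(\rank F, r)$ bound, which is precisely what sharpens the weak inequality to the regular one. There is no genuine obstacle here: the generality of the points and subspace plays no role beyond what the interpolation hypothesis already guarantees, and the fact that $F$ is a subbundle (rather than a general subsheaf) is used only in identifying the image in $\bigoplus_i E_{p_i}/V_i$.
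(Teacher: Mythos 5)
Your proposal is correct and follows essentially the same route as the paper: intersect $F$ with the modification of $E$ furnished by the interpolation hypothesis (the paper writes this as $F' = M(F(-D), W)$ with $W = V \cap F_p$, which is your $F \cap E''$), bound $\h^0$ of that intersection by $\h^0$ of the modified bundle, and bound the length of the torsion cokernel by $q \cdot \rank F + \min\{\rank F, r\}$. The only cosmetic difference is that you spell out the weak case explicitly, whereas the paper treats it as an easier variant of the second argument.
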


\begin{proof}
  The proof of the first statement is analogous to that of the second but uses an easier construction, so we will allow ourselves to only present an argument for the second.

  The hypothesis on $E$ implies that there exists an effective divisor $D$ of degree $q$ such that $\h^0(E(-D)) = r$. Furthermore, there exists a point $p \in C \setminus D$ and a subspace $V \subset E_p$ of codimension $r$ such that the modification $E' = M(E(-D), V)$ has no sections. We construct the modification $F' = M(F(-D), W)$ for $W = V \cap F_p$. A basic fact about dimension of intersections implies $\max\{ 0, \rank F - r \} \leq \dim W$, so
  \[
  \dim(F_p/W) \leq \min\{ \rank F, r \}.
  \]
  
  The constructions above ensure that there is a injection $F' \rarr E'$, hence $\h^0(F') \leq \h^0(E') = 0$. Furthermore, the short exact sequence
  \[\xymatrix{
    0 \ar[r] &
    F' \ar[r] &
    F \ar[r] &
    F|_D \oplus F_p/W \ar[r] &
    0
  }\]
  implies
  \[
  \h^0(F) \leq
  \h^0(F') + \h^0( F|_D \oplus F_p/W ) =
  q \cdot \rank F + \min\{ \rank F, r \}.
  \]
  Dividing by $\rank F$, we arrive at the desired inequality.
\end{proof}

Setting $r = 0$ in the proof of \cref{T:interpolation-inequality}, we obtain the following result.

\begin{corollary}
  If $E$ satisfies weak interpolation and its rank divides $\h^0(E)$, then $E$ is section-semistable.
\end{corollary}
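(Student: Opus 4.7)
The plan is to read this off directly from \cref{T:interpolation-inequality}. Under the hypothesis that $\rank E$ divides $\h^0(E)$, the decomposition $\h^0(E) = n \cdot q + r$ with $0 \leq r < n$ forces $r = 0$. Applying the first inequality of the proposition (which only requires weak interpolation) to an arbitrary subbundle $F \subset E$ gives
\[
\frac{\h^0(F)}{\rank F} \leq \frac{\h^0(E)}{\rank E} + r \left( \frac{1}{\rank F} - \frac{1}{\rank E} \right) = \frac{\h^0(E)}{\rank E},
\]
since the error term vanishes when $r = 0$. This is exactly the inequality defining section-semistability, so the conclusion follows.

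There is no genuine obstacle: the substantive content lives in \cref{T:interpolation-inequality}, whose proof constructs a divisor $D$ of degree $q$ and a codimension-$r$ subspace $V \subset E_p$ witnessing weak interpolation, and then restricts the modification to $F$. When $r = 0$, the construction simplifies — the fiberwise subspace $V$ is trivial and the modification is just $F(-D) \hookrightarrow E(-D)$ — and the bookkeeping collapses to the desired bound. Thus the corollary is truly immediate once one specializes $r = 0$ in the statement already established.
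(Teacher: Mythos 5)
Your proof is correct and matches the paper's, which likewise obtains the corollary by setting $r = 0$ in \cref{T:interpolation-inequality} (the paper phrases it as specializing the proof, but the content is identical). Nothing further is needed.
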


\begin{lemma}
  \label{T:global-sections-subbundle}
    If $E$ is a vector bundle whose global sections span a subbundle of rank $r$, then $E$ satisfies $(r)$-interpolation.
\end{lemma}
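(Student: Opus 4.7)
The plan is to exhibit a single point $p$ and codimension-$r$ subspace $V$ that witness the $(r)$-interpolation dimension count. Let $F \subset E$ be the rank $r$ subbundle spanned by the global sections of $E$, i.e., the image of the evaluation map $\H^0(E) \otimes \Oc_C \to E$. By construction $F$ is globally generated, and since every section of $E$ factors through $F$, the inclusion $F \hookrightarrow E$ induces an equality $\H^0(F) = \H^0(E)$.

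For the construction, I would pick any $p \in C_\sm$ and any subspace $V \subset E_p$ of dimension $\rank E - r$ that is complementary to the fiber $F_p$, so that $V \oplus F_p = E_p$ (such $V$ exists because $\rank F = r$, so a generic choice of the right codimension is transverse to $F_p$). Using the description of sections of a modification recalled in \cref{S:elementary-modifications},
\[
\H^0(M(E,V)) = \{ \sigma \in \H^0(E) : \sigma(p) \in V \}.
\]
Because every $\sigma \in \H^0(E)$ already lies in $\H^0(F)$, its value $\sigma(p)$ belongs to $F_p$; the complementarity of $V$ and $F_p$ then forces $\sigma(p) = 0$. Hence $\H^0(M(E,V)) = \H^0(F(-p))$.

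To finish, I would extract the dimension count from the short exact sequence $0 \to F(-p) \to F \to F_p \to 0$: global generation of $F$ makes the evaluation $\H^0(F) \to F_p$ surjective, so $\h^0(F(-p)) = \h^0(F) - r = \h^0(E) - r$, which is precisely the $(r)$-interpolation equality. There is no real obstacle in this argument; the only thing to verify is that sections of $E$ are genuinely trapped inside $F$, which is built into the definition of $F$ as the subbundle spanned by global sections. (Implicit in the statement is that $E$ is nonspecial so that $(r)$-interpolation is defined; this is used only to invoke the definition and does not enter the calculation.)
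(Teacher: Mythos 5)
Your proposal is correct and follows essentially the same route as the paper: choose $V$ complementary to $F_p$, observe that every section of $E$ takes values in $F$, and conclude that the evaluation to $E_p/V$ is surjective. The only slip is the claim that \emph{any} $p$ works because ``$F$ is globally generated'': since $F$ is really the saturation of the image of $\H^0(E)\otimes\Oc_C\to E$ (so that it is a genuine subbundle with $F_p\hookrightarrow E_p$), the evaluation $\H^0(F)\to F_p$ is only guaranteed to be surjective at a \emph{general} point $p$, which is exactly what the paper uses and is all that the definition of $(r)$-interpolation requires.
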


\begin{proof}
  Let $F \subset E$ be the subbundle spanned by global sections (the saturation of the subsheaf generated by global sections). If $p$ is a general point on the underlying curve, then the evaluation morphism $F \rarr F_p$ is surjective on global sections. By picking a subspace $V \subset E_p$ complimentary to $F_p$ we arrive at the following diagram with two exact rows.
  \[\xymatrix{
    0 \ar[r] &
    F(-p) \ar[r] \ar[d] &
    F \ar[r] \ar[d] &
    F_p \ar[r] \ar[d]^-{\cong} &
    0 \\
    0 \ar[r] &
    M(E,V) \ar[r] &
    E \ar[r] &
    E_p/V \ar[r] &
    0
  }\]
  By construction $F \rarr E$ is an isomorphism on global sections, so $E \rarr E_p/V$ must be surjective on global sections which proves the required claim.
\end{proof}

\begin{lemma}
  \label{T:adding-1}
  If $E$ satisfies $\lambda$-interpolation, then $E$ also satisfies $(\lambda, 1^k)$-interpolation for $k = \h^0(E) - \sum_i \lambda_i$.
\end{lemma}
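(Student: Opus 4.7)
The plan is to proceed by induction on $k$. The base case $k = 0$ is simply the hypothesis of $\lambda$-interpolation. The strategy for the inductive step is to kill one remaining section at a time by imposing a codimension-one condition at a fresh general point, using \cref{T:global-sections-subbundle} and \cref{T:reducing-lambda} as the engine.

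In detail, for the inductive step I would first choose points $p_i \in C_\sm$ and subspaces $V_i \subset E_{p_i}$ of codimension $\lambda_i$ realizing $\lambda$-interpolation, and set $E' = M(E, V_1, \dots, V_m)$, so that $\h^0(E') = k$ by hypothesis. The defining short exact sequence gives $\chi(E) - \chi(E') = \sum_i \lambda_i$, which combined with $\h^0(E) - \h^0(E') = \sum_i \lambda_i$ and the nonspeciality of $E$ forces $\h^1(E') = 0$. Thus $E'$ is itself nonspecial, so the interpolation machinery applies to it. Assuming $k \geq 1$, the global sections of $E'$ span a subbundle of some rank $r' \geq 1$, so \cref{T:global-sections-subbundle} gives $(r')$-interpolation for $E'$, and \cref{T:reducing-lambda} then gives $(1)$-interpolation. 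This supplies a point $q \in C_\sm$ and a hyperplane $W \subset E'_q$ with $\h^0(M(E', W)) = k - 1$. By upper semi-continuity we may take $q$ away from the $p_i$, in which case $M(E', W)$ equals $M(E, V_1, \dots, V_m, W)$, realizing $(\lambda, 1)$-interpolation for $E$. Applying the inductive hypothesis to the pair $(E, (\lambda, 1))$ (whose deficit from $\h^0(E)$ is $k - 1$) yields $(\lambda, 1, 1^{k-1}) = (\lambda, 1^k)$-interpolation.

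The only real obstacle is the bookkeeping: one must check that the intermediate modification $E'$ remains nonspecial so that the cited lemmas apply, and that the new point $q$ can be taken distinct from the previous $p_i$ so that the composite modification genuinely comes from a single point on the Grassmannian bundle of \cref{T:interpolation-lambda}. Both are transparent from the setup above, the former by Euler characteristic bookkeeping and the latter by \cref{T:smooth-components} together with upper semi-continuity. I do not anticipate any deeper difficulty; the substance of the lemma is the simple observation that a nonspecial bundle with sections admits a codimension-one general modification at a new point that drops $\h^0$ by exactly one.
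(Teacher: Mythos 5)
Your proof is correct and follows essentially the same route as the paper: reduce by induction to showing that the (nonspecial) $\lambda$-modification, if it still has a section, satisfies $(1)$-interpolation via \cref{T:global-sections-subbundle} and \cref{T:reducing-lambda}. The paper's version is simply terser, leaving implicit the bookkeeping you spell out (nonspeciality of the intermediate modification and the choice of a fresh point).
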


\begin{proof}
  If $F$ denotes a general $\lambda$-modification of $E$, then it suffices to show $F$ satisfies $(1^{\h^0(F)})$-interpolation. Furthermore, it suffices to show that if $h^0(F) \geq 1$, then $F$ satisfies $(1)$-interpolation. By considering a non-zero section of $F$, it follows that the subbundle spanned by global sections has rank at least $1$ and we can apply \cref{T:global-sections-subbundle}.
\end{proof}

\begin{corollary}
  \label{T:strong-interpolation-2}
  For rank $2$ bundles, weak, regular and strong interpolation are equivalent notions.
\end{corollary}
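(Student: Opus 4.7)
The plan is to reduce the statement to a direct combination of the two preceding tools: \cref{T:reducing-lambda} (Reducing $\lambda$) and \cref{T:adding-1} (Adding $1$). Since $\rank E = 2$, write $\h^0(E) = 2q + r$ with $r \in \{0, 1\}$. Because strong interpolation ranges over all admissible weakly decreasing $\lambda$ with $\lambda_i \leq \rank E = 2$ and $\sum_i \lambda_i \leq \h^0(E)$, every such $\lambda$ has the shape $\lambda = (2^a, 1^b)$ for integers $a, b \geq 0$ with $2a + b \leq 2q + r$ (in particular $a \leq q$). Thus it suffices to show that weak interpolation implies $(2^a, 1^b)$-interpolation for every such pair $(a,b)$.

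Assume $E$ satisfies weak interpolation, i.e., $(2^q)$-interpolation. First, for any $0 \leq a \leq q$, apply \cref{T:reducing-lambda} with $\lambda = (2^q)$ and $\lambda' = (2^a, 0^{q-a})$ to obtain $(2^a)$-interpolation. Next, apply \cref{T:adding-1} to the tuple $(2^a)$; since $\sum_i \lambda_i = 2a$, this produces $(2^a, 1^{2q + r - 2a})$-interpolation. Finally, apply \cref{T:reducing-lambda} once more to trim the tail of ones down to any desired length $b \leq 2q + r - 2a$, yielding $(2^a, 1^b)$-interpolation. This exhausts the admissible tuples, so $E$ satisfies strong interpolation. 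Specializing to $(a,b) = (q,r)$ recovers regular $(2^q, r)$-interpolation. Since strong interpolation trivially implies regular, and regular trivially implies weak, all three notions are equivalent.

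There is no real obstacle beyond the bookkeeping step of identifying the shape of an admissible $\lambda$ in rank $2$; once that is in hand, the argument is a two-line composition of the preceding lemmas. The only subtle point is ensuring that \cref{T:reducing-lambda} is applied to the correct padded tuples so that the hypothesis $\lambda_i \geq \lambda_i'$ holds term by term, which is immediate after padding with zeros.
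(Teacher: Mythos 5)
Your proof is correct and follows essentially the same route as the paper: reduce to tuples of the shape $(2^a,1^b)$, get $(2^a)$-interpolation from \cref{T:reducing-lambda}, and append the tail of ones via \cref{T:adding-1}. The only difference is that you make explicit the final trimming of the tail with a second application of \cref{T:reducing-lambda}, which the paper leaves implicit.
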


\begin{proof}
  Weak interpolation means that $E$ satisfies $(2^q)$-interpolation where $q = \lfloor \h^0(E)/2 \rfloor$. To demonstrate strong interpolation, we need to consider all tableaux $(2^a, 1^b)$ such that $2a + b \leq \h^0(E)$. Since $a \leq q$, \cref{T:reducing-lambda} implies that $E$ satisfies $(2^a)$-interpolation. To arrive at $(2^a, 1^b)$ we apply \cref{T:adding-1}.
\end{proof}

In fact, the equivalence of strong and regular interpolation extends to arbitrary rank vector bundles; we postpone the discussion of this result to \cref{S:strong-regular}.

The following two diagrams summarize the implications we described so far.

\vspace{\baselineskip}
\begin{centering}
  \begin{tabular}{p{3in}p{3in}}
    \textbf{In genus $0$} &
    \textbf{In general} \\[10pt]
    $\xymatrix@C=0.07in{
      \textrm{strong interpolation} \ar@{<=>}[d] &
      \textrm{slope semistability} \ar@{=>}@/^/[d]^-{\h^1 = 0} \\
      \textrm{interpolation} \ar@{=>}[d] \ar@{=>}[r] &
      \textrm{section semistability} \ar@{=>}@/^/[u]^-{\h^0 > 0} \\
      \textrm{weak interpolation}
    }$
    &
    $\xymatrix@C=0.07in{
      \textrm{strong interpolation} \ar@{<=>}[d]_-{\cref{T:strong-interpolation}} &
      \textrm{slope semistability} \ar@{<=}[d]^-{\h^1 = 0} \\
      \textrm{interpolation} \ar@/^/@{=>}[d] &
      \textrm{section semistability} \\
      \textrm{weak interpolation} \ar@{=>}[ur]_-{\rank | \h^0} \ar@{=>}@/^/[u]^-{\rank = 2}
    }$
  \end{tabular}
\end{centering}


\section{Interpolation via nodal curves}
\label{S:nodal-curves}

We aim to prove the following two results.

\begin{theorem}
  \label{T:two-curves-interpolation}
  Let $C$ and $D$ be curves in some ambient projective space $\Pbb^n$ which meet transversely in a single point $p$, smooth in each curve. Assume
  \begin{enumeratea}
  \item $C$ satisfies $\lambda$-interpolation,
  \item $D$ satisfies $(n-1, \mu)$-interpolation.
  \end{enumeratea}
  Then, up to changing the point of attachment on $D$ through a rigid motion (via the action of $\PGL(n+1)$ on $\Pbb^n$), the union curve $X = C \cup D$ satisfies $(\lambda, \mu)$-interpolation.
\end{theorem}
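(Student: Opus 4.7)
My approach deduces interpolation for the nodal union $X$ by combining the hypotheses on $C$ and $D$ via the node-compatibility structure of $N_X$. A section of $N_X$ is a pair $(\sigma_C, \sigma_D)$ of sections on the two branches (of the restrictions $N_X|_C$ and $N_X|_D$, each a bundle on the corresponding curve agreeing with $N_C$, $N_D$ away from $p$) satisfying a compatibility condition at $p$; this compatibility lives in a node-cokernel $Q_p$ supported at $p$, and the construction is functorial under elementary modifications taken at points away from $p$.

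The first step is to extract from the $(n-1, \mu)$-interpolation on $D$ a distinguished point $p' \in D$ together with general data $(r_j, V_j)$ realizing
\[
\h^0\bigl(M(N_D(-p'), V_j)\bigr) = \h^0(N_D) - (n-1) - \sum_j \mu_j.
\]
Comparing with the twist sequence $0 \to M(N_D(-p'), V_j) \to M(N_D, V_j) \to N_{D, p'} \to 0$ then shows that the evaluation $\h^0(M(N_D, V_j)) \twoheadrightarrow N_{D, p'}$ is surjective and that $D$ satisfies plain $\mu$-interpolation with the same data. Using the $\PGL(n+1)$ freedom granted by the theorem, I move $D$ so that $p'$ coincides with the attachment point $p$ while preserving transversality and the generality of the $r_j$. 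Separately, $\lambda$-interpolation on $C$ provides $(q_i, W_i)$ with $\h^0(M(N_C, W_i)) = \h^0(N_C) - \sum_i \lambda_i$, chosen disjoint from the data on $D$.

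Now form $E' = M(N_X, W_i \text{ at } q_i, V_j \text{ at } r_j)$. Applying the node-compatibility sequence to $E'$ yields
\[
0 \to E' \to M(N_X|_C, W_i) \oplus M(N_X|_D, V_j) \to Q_p \to 0,
\]
and the key claim is that the induced map on global sections is surjective onto $Q_p$. This follows because $Q_p$ is a quotient of $(N_X|_D)|_p$ through $N_{D,p}$, and the surjectivity of $\h^0(M(N_D, V_j)) \twoheadrightarrow N_{D,p}$ established above already forces surjectivity of $\h^0(M(N_X|_D, V_j)) \to Q_p$ on the $D$-summand alone. The analogous sequence for the unmodified $N_X$ has the same surjective image in $Q_p$, since the interior modifications do not alter section values at $p$. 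Subtracting the two $\h^0$ expressions read off from the sequences cancels the common $\dim Q_p$, giving
\[
\h^0(N_X) - \h^0(E') = \bigl[\h^0(N_X|_C) - \h^0(M(N_X|_C, W_i))\bigr] + \bigl[\h^0(N_X|_D) - \h^0(M(N_X|_D, V_j))\bigr],
\]
and each bracket equals $\sum_i \lambda_i$ resp.\ $\sum_j \mu_j$ because $N_X|_C \cong N_C$ and $N_X|_D \cong N_D$ near the modification points, so evaluation surjectivity at those points is inherited from the interpolation hypotheses on $C$ and $D$. The vanishing $\h^1(E') = 0$ follows from the same $\h^0$-surjectivity together with the interpolation-forced $\h^1 = 0$ on each branch.

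\textbf{Expected obstacle.} The principal technical ingredient is the precise form of the node-compatibility sequence for $N_X$ — identifying the restrictions $N_X|_C$ and $N_X|_D$ at the node, the length and structure of the cokernel $Q_p$, and checking its compatibility with elementary modifications away from $p$. Once this local/global analysis is settled, the proof is essentially a clean diagram chase in which the strength of the $(n-1, \mu)$-hypothesis is designed precisely to control $Q_p$ at the attachment point. A secondary concern is that the $\PGL(n+1)$-motion of $D$ preserves both transversality at $p$ and generality of the $r_j$, but both are standard open conditions.
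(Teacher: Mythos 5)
Your proposal is correct and follows essentially the same route as the paper: extract from the $(n-1,\mu)$-hypothesis a $\mu$-modification of $N_D$ whose sections still surject onto the fiber at a distinguished point, use the $\PGL(n+1)$ freedom to place that point at the node, and then run the Mayer--Vietoris sequence $0 \to M(N_X, \cdot) \to M(N_X|_C,\cdot) \oplus M(N_X|_D,\cdot) \to N_X|_p \to 0$ together with the sequences $0 \to N_{C} \to N_X|_{C} \to T^1_p \to 0$ (and likewise for $D$) to add the section counts. The only point to tighten is that the fiber map $N_{D,p} \to (N_X|_D)_p = Q_p$ has image of codimension one (the $T^1_p$ direction), so surjectivity of $\H^0(M(N_D,V_j)) \to N_{D,p}$ must be supplemented by $\h^1(M(N_D,V_j)) = 0$ to hit all of $Q_p$ — exactly the local analysis you flag as the expected obstacle, and the same chase the paper performs.
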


\begin{theorem}
  \label{T:curve-secant-interpolation}
  Let $C \subset \Pbb^n$ be a curve, and $L$ a secant line which meets $C$ transversely in two points $p$, $q$, both smooth in $C$. If $C$ satisfies $\lambda$-interpolation, then these properties also hold for the union $X = C \cup L$.

  If the two tangent lines to $C$ at the points of $C \cap L$ are skew, then $X$ satisfies $(\lambda, 2)$-interpolation. This condition holds for a general secant line as long as $C$ is not planar.
\end{theorem}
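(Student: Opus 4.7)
The plan is to reduce interpolation on the nodal curve $X = C \cup L$ to interpolation on the two branches via a Mayer-Vietoris analysis of the normal sheaf.

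I would first set up the normal sheaves on the branches. The torsion-free normal sheaf $N_X$ of $X$ in $\Pbb^n$ restricts on each branch to a vector bundle obtained from the branch's normal bundle by a positive elementary modification at each node, in the direction of the opposite branch's tangent. Specifically, $N_X|_C$ fits into $0 \to N_C \to N_X|_C \to K_p \oplus K_q \to 0$, where each $1$-dimensional cokernel records the image of $T_\ast L$ in $N_{C,\ast}$; analogously $N_X|_L$ is built from $N_L \cong \Oc_{\Pbb^1}(1)^{n-1}$ using the tangent directions of $C$ at $p, q$. Global sections of $N_X$ then fit into the Mayer-Vietoris sequence
\[
0 \to \H^0(N_X) \to \H^0(N_X|_C) \oplus \H^0(N_X|_L) \to K_p \oplus K_q \to \H^1(N_X) \to 0,
\]
with both restrictions nonspecial ($N_X|_L$ since $N_L$ is, and $N_X|_C$ since $N_C$ is by the $\lambda$-interpolation hypothesis). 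A direct count yields $\h^0(N_X) = \h^0(N_C) + 2n$.

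For $\lambda$-interpolation of $X$, I would place all $\lambda$-modifications at general points $s_i \in C_\sm \setminus \{p, q\}$ with general subspaces $V_i \subset N_{C, s_i}$. The hypothesis on $C$ gives $\h^0(M(N_C, V_i)) = \h^0(N_C) - \sum \lambda_i$, and the positive modification structure at the nodes transfers this to $\h^0(M(N_X|_C, V_i)) = \h^0(N_X|_C) - \sum \lambda_i$. Plugging into the modified Mayer-Vietoris sequence for $M(N_X, V_i)$, the desired drop for $N_X$ follows provided the gluing evaluation map $\H^0(M(N_X|_C, V_i)) \oplus \H^0(N_X|_L) \to K_p \oplus K_q$ is surjective; this holds because $\H^0(N_X|_L)$, unchanged by the modifications on $C$, already evaluates surjectively onto both $K_p$ and $K_q$.

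For $(\lambda, 2)$-interpolation under the skew hypothesis, I would additionally impose a codim-$2$ condition at a general point $r \in L_\sm$ with general $W \subset N_{L, r}$. The same Mayer-Vietoris argument reduces the problem to showing $N_X|_L$ satisfies $(2)$-interpolation, which by \cref{T:interpolation-P1} depends only on the splitting type of $N_X|_L$. Since $N_X|_L$ has rank $n-1$, degree $n+1$, and $\h^1 = 0$, the only possibilities are the balanced $\Oc(2)^2 \oplus \Oc(1)^{n-3}$ or the unbalanced $\Oc(3) \oplus \Oc(1)^{n-2}$. To discriminate, I would twist the defining extension by $\Oc_L(-3)$ and take cohomology: the balanced splitting corresponds to surjectivity of the connecting map $K_p \oplus K_q \to \H^1(N_L(-3)) \cong K^{n-1}$, which, unwound through the canonical trivialization of $N_L$, amounts to the statement that the images of $T_p C$ and $T_q C$ in $N_{L, p}, N_{L, q}$ are linearly independent. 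This last condition is in turn equivalent to the two tangent lines being skew in $\Pbb^n$.

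The genericity of the skew condition for non-planar $C$ follows from the observation that if tangent lines at general pairs of points on $C$ were always coplanar with the chord, then letting the pair vary would force $C$ into a plane. The main obstacle is the splitting-type analysis in the previous paragraph — matching the geometric skew condition with the algebraic cohomology data of the positively modified bundle $N_X|_L$ on $\Pbb^1$ requires careful unwinding of the identifications implicit in the positive modification construction.
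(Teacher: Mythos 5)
Your treatment of the first claim follows the paper's argument almost exactly: restrict $N_X$ to the two components, observe that $N_X|_C$ is a positive elementary modification of $N_C$ at the nodes (\cref{T:normal-bundle-component}), place the whole $\lambda$-datum on $C$ away from $C \cap L$, and reduce everything to surjectivity of the evaluation $\H^0(N_X|_L) \to N_X|_p \oplus N_X|_q$. Three slips there are worth fixing. The gluing term of your Mayer--Vietoris sequence is $N_X|_p \oplus N_X|_q$, of dimension $2(n-1)$, not the one-dimensional spaces $K_p \oplus K_q$ you introduced as the cokernels of $N_C \to N_X|_C$; correspondingly the count is $\h^0(N_X) = \h^0(N_C) + 4$, not $\h^0(N_C) + 2n$. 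And the surjectivity of evaluation from $L$ is not free ``because it already evaluates surjectively'': it requires knowing that every summand of $N_X|_L$ has degree at least $1$, which the paper gets by listing the two possible splitting types $\Oc_L(2)^{\oplus 2} \oplus \Oc_L(1)^{\oplus(n-3)}$ and $\Oc_L(3) \oplus \Oc_L(1)^{\oplus(n-2)}$ (and which follows since $N_X|_L$ contains $\Oc_L(1)^{\oplus(n-1)}$ with torsion quotient and $\Hom(\Oc_L(1), \Oc_L(a)) = 0$ for $a \leq 0$).

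For the $(\lambda,2)$ claim you genuinely diverge from the paper, and this is where the real gap sits. Detecting the splitting type by twisting $0 \to N_L \to N_X|_L \to T^1_p \oplus T^1_q \to 0$ by $\Oc_L(-3)$ is a viable idea, but the balanced type corresponds to $\h^0(N_X|_L(-3)) = 0$, i.e.\ to \emph{injectivity} of the connecting map $K^2 \to \H^1(N_L(-3)) \cong K^{n-1}$ --- surjectivity is impossible for $n \geq 4$. More importantly, the identification of that injectivity with skewness of the two tangent lines is precisely the ``careful unwinding'' you defer, so your sketch stops short of the one computation the second claim needs. The paper avoids this bookkeeping with a direct geometric decomposition: since $N_X|_L$ depends only on the tangent directions at $p$ and $q$, replace $C$ by the two tangent lines $L_1 \cup L_2$; skewness makes $L \cup L_1 \cup L_2$ span a $3$-plane $P$, and
\[
N_X|_L \cong N_{(L \cup L_1)/P_1}|_L \oplus N_{(L \cup L_2)/P_2}|_L \oplus N_{P/\Pbb^n}|_L \cong \Oc_L(2) \oplus \Oc_L(2) \oplus \Oc_L(1)^{\oplus(n-3)}.
\]
Finally, the genericity of skewness for non-planar $C$ is not established by your one-line ``vary the pair'' heuristic; the paper derives it from \cref{T:skew-tangents}, which rests on \cref{T:tangent-away-from-line} and the classification of strange curves, and something of that strength is actually needed.
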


The proofs use techniques similar to \cite{Hartshorne-Hirschowitz} and \cite{Ran}, so we start by recalling several preliminary results these sources present.

\begin{proposition}
  \label{T:normal-bundle-normalization}
  Let $X \subset \Pbb^n$ be a nodal curve, and let $\nu \cn X' \rarr X$ denote its normalization. Then the normal bundles of the two curves fit in a short exact sequence
  \[\xymatrix{
    0 \ar[r] &
    N_{X'} \ar[r] &
    \nu^\ast N_X \ar[r] &
    \nu^\ast T^1_X \ar[r] &
    0,
  }\]
  where $T^1_X$ stands for the Lichtenbaum-Schlessinger $T^1$-functor of $X$. It is supported on $S = \Sing X$, so we will also denote it by $T^1_S$.
\end{proposition}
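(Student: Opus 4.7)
Proof proposal. The plan is to combine the conormal sequences of $X \subset \Pbb^n$ and $X' \subset \Pbb^n$ with a local calculation at each node. Since $X$ is a nodal curve in a smooth ambient, it is lci: at each node we can choose analytic coordinates $y_1, \ldots, y_n$ on $\Pbb^n$ with $\mathcal{I}_X = (y_1 y_2, y_3, \ldots, y_n)$, providing $n-1$ generators in codimension $n-1$. Both conormal sheaves $\mathcal{I}_X/\mathcal{I}_X^2$ and $\mathcal{I}_{X'}/\mathcal{I}_{X'}^2$ are therefore locally free of rank $n-1$, with $\Oc$-duals $N_X$, $N_{X'}$ the expected rank $n-1$ vector bundles.

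The morphism $N_{X'} \to \nu^* N_X$ is supplied by functoriality: the inclusion $\mathcal{I}_X \subseteq \mathcal{I}_{X'}$ in $\Oc_{\Pbb^n}$ induces a natural map $\nu^*(\mathcal{I}_X/\mathcal{I}_X^2) \to \mathcal{I}_{X'}/\mathcal{I}_{X'}^2$, and dualizing gives the desired map. Off $\Sing X$ the normalization $\nu$ is an isomorphism and $T^1_X$ vanishes, so exactness is trivial there. At a node, with the two local branches of $X'$ parameterized by $s \mapsto (s, 0, \ldots, 0)$ and $t \mapsto (0, t, 0, \ldots, 0)$, the conormal map on the first branch carries $y_1 y_2 \mapsto s \cdot y_2$ and fixes $y_3, \ldots, y_n$. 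In the natural bases this is $\mathrm{diag}(s, 1, \ldots, 1)$, whose transpose $N_{X'} \to \nu^* N_X$ is injective with cokernel the length-one skyscraper $k[[s]]/(s)$ sitting in the coordinate dual to $y_1 y_2$; the analogous statement holds at the other preimage.

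The remaining step is to identify this cokernel canonically with $\nu^* T^1_X$. Numerically the match is immediate, since $T^1_X$ is a length-one skyscraper at each node recording the smoothing deformation $y_1 y_2 = \epsilon$, so that $\nu^* T^1_X$ has length one at each preimage. To make the isomorphism natural I would dualize the conormal sequence on $X$ to obtain the four-term exact sequence
\[
0 \to T_X \to T_{\Pbb^n}|_X \to N_X \to T^1_X \to 0
\]
(with $T^1_X \cong \mathcal{E}xt^1(\Omega_X, \Oc_X)$), pull it back to $X'$, and compare with the short exact sequence $0 \to T_{X'} \to \nu^* T_{\Pbb^n} \to N_{X'} \to 0$ via the differential $d\nu \cn T_{X'} \to \nu^* T_X$. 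I expect this last comparison to be the main obstacle: $d\nu$ is injective with torsion cokernel at the preimages of the nodes, so the argument must pass through a snake-lemma-style diagram chase that tracks how $\nu^* T^1_X$ appears as the cokernel of $N_{X'} \to \nu^* N_X$ rather than being polluted by higher $\mathrm{Tor}$ terms at the non-flat points of $\nu$.
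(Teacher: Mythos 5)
The paper itself offers no proof of this proposition: it is recalled as a preliminary result from Hartshorne--Hirschowitz and Ran, so there is no in-paper argument to compare against, and your proposal must stand on its own. It essentially does. The local presentation $\mathcal{I}_X = (y_1y_2, y_3, \dots, y_n)$, the functoriality map on conormal sheaves (which, as the paper's remark following the proposition notes, makes sense because $X' \rarr \Pbb^n$ is an embedding locally on $X'$ even though it is not one globally), and the computation that $N_{X'} \rarr \nu^\ast N_X$ is $\mathrm{diag}(s,1,\dots,1)$ on each branch --- hence injective with length-one cokernel at each preimage of a node --- are all correct. The one loose end is the canonical identification of that cokernel with $\nu^\ast T^1_X$, which you flag yourself; it closes more easily than you fear, and the snake-lemma chase through $d\nu$ is not needed. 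Since $T^1_X = \mathcal{E}xt^1(\Omega_X,\Oc_X)$ is a skyscraper of length one at each node, $\nu^\ast T^1_X$ is just $\Oc_{X'}$ modulo the extension of the node's maximal ideal, which has length one at each preimage point; no derived-pullback or $\mathrm{Tor}$ correction can appear because you are pulling back a skyscraper, not the whole four-term sequence. Moreover the four-term sequence exhibits $T^1_X$ at the node as the cokernel of $T_{\Pbb^n}|_X \rarr N_X$, i.e.\ as canonically generated by the functional dual to the class of $y_1y_2$ in $\mathcal{I}_X/\mathcal{I}_X^2$ (the first-order smoothing $y_1y_2=\epsilon$), which is exactly the generator of the cokernel you computed; so the identification is a direct matching of canonical generators. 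With that observation in place of your final tentative paragraph, the proof is complete, and it suffices for every use the paper makes of the proposition, since those only require the cokernel to be a length-one skyscraper at each point over a node.
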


\begin{remark}
  \label{T:nodal-curve-normal-bundle}
  Let $i \cn X \rarr \Pbb^n$ denote the inclusion morphism as in the previous result. While the composition $i \circ \nu \cn X' \rarr \Pbb^n$ is not an embedding, it is an embedding locally on $X'$. It follows that the differential $d(i \circ \nu) \cn T_{X'} \rarr (i \circ \nu)^\ast T_{\Pbb^n}$ an injective morphism of vector bundles, so we can define the normal bundle $N_{X'}$ as its cokernel.
\end{remark}

\begin{corollary}
  \label{T:normal-bundle-component}
  Let $X = C \cup D$ be the union of two smooth curves $C$, $D$ intersecting transversely in $S$. Then there is a short exact sequence
  \[\xymatrix{
    0 \ar[r] &
    N_C \ar[r] &
    N_X|_C \ar[r] &
    T^1_S \ar[r] &
    0.
  }\]
\end{corollary}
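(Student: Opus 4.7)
The plan is to obtain the sequence by applying \cref{T:normal-bundle-normalization} to $X = C \cup D$ and then extracting the component supported on $C$.

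First I would identify the normalization: since $C$ and $D$ are smooth and meet transversely, the normalization morphism is simply $\nu \cn X' = C \sqcup D \rarr X$, the disjoint union map. Under this identification, $N_{X'} = N_C \sqcup N_D$ and $\nu^\ast N_X = (N_X|_C) \sqcup (N_X|_D)$, where each piece is viewed as a sheaf on the corresponding component of $X'$. Applying \cref{T:normal-bundle-normalization} yields the short exact sequence
\[\xymatrix{
  0 \ar[r] &
  N_C \sqcup N_D \ar[r] &
  (N_X|_C) \sqcup (N_X|_D) \ar[r] &
  \nu^\ast T^1_X \ar[r] &
  0
}\]
on $X'$, and this sequence decomposes as the direct sum of its restrictions to $C$ and to $D$ because $X'$ is disconnected.

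Next I would analyze $\nu^\ast T^1_X$. The sheaf $T^1_X$ is a skyscraper supported on $S$ with one-dimensional stalks at each node (standard for simple nodes of a reduced curve). The preimage $\nu^{-1}(S)$ consists of two copies of $S$, one in each component; the pullback $\nu^\ast T^1_X$ therefore has one-dimensional stalks at each such preimage point. Restricting to the $C$-component identifies this piece with $T^1_S$ viewed as a skyscraper on $C$ supported at $S \subset C$. Taking the $C$-component of the displayed sequence then produces exactly
\[\xymatrix{
  0 \ar[r] &
  N_C \ar[r] &
  N_X|_C \ar[r] &
  T^1_S \ar[r] &
  0,
}\]
which is the claim.

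The only nontrivial point is verifying that the $C$-restriction of $\nu^\ast T^1_X$ is canonically $T^1_S$ rather than, say, a doubled version; this is an issue of bookkeeping at each node, and follows from the fact that a simple node contributes a single one-dimensional stalk to $T^1_X$ which pulls back to a single one-dimensional stalk at each of the two preimages, with one preimage lying in $C$ and the other in $D$. Once this identification is in hand, the remainder is formal.
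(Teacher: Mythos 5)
Your proposal is correct and is essentially the argument the paper intends: since the corollary is deduced from \cref{T:normal-bundle-normalization} by observing that the normalization of $X = C \cup D$ is the disjoint union $C \sqcup D$ and then taking the $C$-component of the resulting sequence, with the restriction of $\nu^\ast T^1_X$ to $C$ identified with the skyscraper $T^1_S$. The bookkeeping point you flag about each node contributing one one-dimensional stalk on each branch is exactly the right thing to check, and your treatment of it is fine.
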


We are now ready to substantiate our claims.

\begin{proof}[Proof of \cref{T:two-curves-interpolation}]
  First pick a $\lambda$-modification datum $V_i \subset N_{C, p_i}$ away from $C \cap D = \{p\}$, so $\h^0(M(N_C, V_i)) = \h^0(N_C) - \sum_i \lambda_i$. We do the same for $D$ but only using a $\mu$-modification: $W_j \subset N_{D, q_j}$ is a datum away from $p$ such that $\h^0(M(N_D, W_j)) = \h^0(N_D) - \sum_j \mu_j$ and $M(N_D, W_j)$ satisfies $(n-1)$-interpolation.
  
  After applying a modification to the sequence from \cref{T:normal-bundle-component}, we obtain the following diagram with two exact rows and two exact columns.
  \[\xymatrix{
    & 0 \ar[d] & 0 \ar[d] \\
    0 \ar[r] & M(N_C, V_i) \ar[r] \ar[d] & M(N_X|_C, V_i) \ar[r] \ar[d] & T^1_S \ar[r] \ar@{=}[d] & 0 \\
    0 \ar[r] & N_C \ar[r] \ar[d] & N_X|_C \ar[r] \ar[d] & T^1_S \ar[r] & 0 \\
    & \bigoplus_i N_{C, p_i}/V_i \ar[d] \ar@{=}[r] & \bigoplus_i N_{C, p_i}/V_i \ar[d] \\
    & 0 & 0
  }\]
  A diagram chase argument shows that all sheaves present have $\H^1 = 0$, so the diagram remains exact after applying $\H^0$ to it. The exactness of the second column implies
  \[
  \h^0(M(N_X|_C, V_i)) = \h^0(N_X|_C) - \sum \lambda_i.
  \]
  The analogous argument for $D$ produces
  \[
  \h^0(M(N_X|_D, W_j)) =
  \h^0(N_X|_D) - \sum \mu_j.
  \]
  Next, we need to relate $N_X|_C$ and $N_X|_D$ to $N_X$. Tensor the short exact sequence
  \[\xymatrix{
    0 \ar[r] &
    \Oc_X \ar[r] &
    \Oc_C \oplus \Oc_D \ar[r] &
    \Oc_p \ar[r] &
    0,
  }\]
  with the morphism $M(N_X, V_i, W_j) \rarr N_X$.
  \[\xymatrix{
    0 \ar[r] &
    M(N_X, V_i, W_j) \ar[r] \ar[d] &
    M(N_X|_C, V_i) \oplus M(N_X|_D, W_j) \ar[r] \ar[d] &
    M(N_X, V_i, W_j)|_p \ar[r] \ar[d]^-{\cong} &
    0 \\
    0 \ar[r] &
    N_X \ar[r] &
    N_X|_C \oplus N_X|_D \ar[r] &
    N_X|_p \ar[r] &
    0
  }\]
  If we assume $\H^1(M(N_X, V_i, W_j)) = 0$, then $\H^1(N_X) = 0$ and we deduce
  \begin{align*}
    \h^0(M(N_X, V_i, W_j))
    &= \h^0(M(N_X|_C, V_i)) + \h^0(M(N_X|_D, W_j)) - \h^0(M(N_X, V_i, W_j)|_p) \\
    &= \h^0(N_X|_C) - \sum \lambda_i + \h^0(N_X|_D) - \sum \mu_j - \h^0(N_X|_p) \\
    &= \h^0(N_X) - \sum \lambda_i - \sum \mu_j,
  \end{align*}
  which proves that $X$ satisfies $(\lambda,\mu)$-interpolation.

  We are left to show that $\H^1(M(N_X, V_i, W_j)) = 0$. From the previous diagram, it suffices to show that
  \[\xymatrix{
    M(N_X|_D, W_j) \ar[r] &
    M(N_X, V_i, W_j)|_p = M(N_X, W_j)|_p
  }\]
  is surjective on $\H^0$. This follows if we show that
  \[\xymatrix{
    M(N_D, W_j) \ar[r] &
    M(N_D, W_j)|_p
  }\]
  is surjective on $\H^0$, but that is a consequence of the fact $M(N_D, W_j)$ satisfies $(n-1)$-interpolation. A rigid motion of one of the curves relative to the other may be necessary since the point of modification $p$ for $D$ has to be general.
\end{proof}

Taking $D$ to be a line leads to a very useful result.

\begin{corollary}
  \label{T:curve-line-interpolation}
  Let $C \subset \Pbb^n$ be a curve, and $L$ a line which meets $C$ transversely in a single point $p$ smooth on $C$. If $C$ satisfies $\lambda$-interpolation, then $X = C \cup L$ satisfies $(n-1, \lambda)$-interpolation.
\end{corollary}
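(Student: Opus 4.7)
The plan is to deduce this as a direct consequence of \cref{T:two-curves-interpolation} with the line $L$ playing the role of $D$. The key observation driving the argument is that the normal bundle of a line in $\Pbb^n$ is as balanced as possible, and so automatically satisfies the interpolation hypothesis the theorem places on $D$.

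First, I would compute $N_L \cong \Oc_{\Pbb^1}(1)^{\oplus (n-1)}$, a bundle of rank $n-1$ with $\h^0(N_L) = 2(n-1)$. Since this splitting is perfectly balanced, the analysis of interpolation on $\Pbb^1$ carried out earlier in this section (\cref{T:interpolation-P1}) shows that $N_L$ satisfies $((n-1)^2)$-interpolation. This matches hypothesis (b) of \cref{T:two-curves-interpolation} with $\mu = (n-1)$.

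Next, I would apply \cref{T:two-curves-interpolation} with the original $C$ (satisfying $\lambda$-interpolation by hypothesis) and with $D = L$ and $\mu = (n-1)$. The conclusion is that $X = C \cup L$ satisfies $(\lambda, n-1)$-interpolation. To match the form stated in the corollary, I would invoke the admissibility condition $\lambda_i \leq \rank N_X = n-1$, which allows $(\lambda, n-1)$ to be reordered into weakly decreasing form as $(n-1, \lambda)$; since the definition of $\lambda$-interpolation depends only on the multiset of codimensions prescribed at the chosen (general) points, this represents the same property.

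There is no serious obstacle here. The only subtlety is choosing which curve carries which modification: the ``extra'' $(n-1)$ entry in the output tuple $(n-1, \lambda)$ must be produced by an $(n-1)$-modification on the \emph{line} side, since such a modification on the $C$ side is not guaranteed by the given $\lambda$-interpolation alone. The line's balance ensures this works for free.
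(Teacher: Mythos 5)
Your proposal is correct and follows the paper's own proof exactly: identify $N_L \cong \Oc_L(1)^{\oplus(n-1)}$, note that it is balanced and hence satisfies $((n-1)^2)$-interpolation, and apply \cref{T:two-curves-interpolation} with $D = L$ and $\mu = (n-1)$. The one point you omit is that the conclusion of \cref{T:two-curves-interpolation} holds only \emph{up to changing the point of attachment on $D$ by a rigid motion}, so to get the stated conclusion for $X = C \cup L$ itself you should add the paper's closing observation that no reattachment is needed because two points on a line have no moduli (any point of $L$ is as general as any other under $\PGL(n+1)$).
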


\begin{proof}
  The normal bundle to $L$ is $N_L \cong \Oc_L(1)^{\oplus(n-1)}$, so it satisfies $((n-1)^2)$-interpolation. No reattachment is necessary since two points on a line have no moduli.
\end{proof}

We need several ``selection'' results to complete the proof of \cref{T:curve-secant-interpolation}. These statements allow us to choose objects (e.g., secant lines, tangent lines) with sufficiently nice properties.

\begin{proposition}
  \label{T:tangent-away-from-line}
  Let $C \subset \Pbb^n$ be a reduced irreducible curve and $L \subset \Pbb^n$ be a line. If $C$ is non-planar, then a general tangent line of $C$ does not meet $L$.
\end{proposition}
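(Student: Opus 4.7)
The plan is to argue by contradiction, exploiting the projection from $L$. Let $\pi_L \cn \Pbb^n \drarr \Pbb^{n-2}$ denote the linear projection away from $L$; writing $L = \Pbb(W)$ for a $2$-dimensional subspace $W \subset K^{n+1}$, this map is induced by the quotient $K^{n+1} \to K^{n+1}/W$, and its fibers are precisely the $2$-planes of $\Pbb^n$ containing $L$. Since $C$ is irreducible but non-planar, $C$ is not contained in $L$, so $\pi_L|_C$ is a well-defined rational map.

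The crux is a pointwise computation: for any $p \in \Pbb^n \setminus L$, the kernel of $d\pi_L$ at $p$ is the tangent space at $p$ of the $2$-plane $\Pi_p$ spanned by $p$ and $L$. I would then establish the chain of equivalences
\[
T_pC \cap L \neq \emptyset
\ \Longleftrightarrow\
T_pC \subset \Pi_p
\ \Longleftrightarrow\
d(\pi_L|_C)_p = 0
\]
for $p \in C_\sm \setminus L$. The first equivalence is elementary: any two lines in a common $\Pbb^2$ meet, and conversely, if $T_pC$ meets $L$ at some $q$, then $T_pC$ is the unique line through $p$ and $q$, hence sits inside $\Pi_p$. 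The second equivalence is the kernel computation applied to the vector $T_pC \subset T_p\Pbb^n$.

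Now suppose for contradiction that a general tangent line of $C$ meets $L$. The equivalence above forces $d(\pi_L|_C)$ to vanish on a dense open subset of $C_\sm \setminus L$ where $\pi_L|_C$ is defined. Since we are in characteristic $0$ and $C$ is reduced and irreducible, this forces $\pi_L|_C$ to be constant, so $C$ lies in the closure of a single fiber of $\pi_L$, namely a $2$-plane through $L$. This contradicts the non-planarity of $C$.

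The main step requiring care is the identification of $\ker d\pi_L$ and the resulting equivalence above; once this is in hand, the rest follows formally. A secondary bookkeeping issue is to excise the finite set $C \cap L$ together with the singular locus of $C$, but both are proper closed subsets and so do not affect the genericity statement.
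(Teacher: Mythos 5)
Your proof is correct, but it takes a genuinely different route from the paper's. You project away from the line $L$ itself, identify the kernel of the differential of $\pi_L$ at $p$ with the tangent space of the $2$-plane spanned by $p$ and $L$, and then use the characteristic-zero fact that a rational map from a reduced irreducible curve with generically vanishing differential must be constant; the contradiction with non-planarity follows because $C$ would then sit inside a single fiber closure, a $2$-plane through $L$. The paper instead projects from a single point $p \in L \setminus C$ down to $\Pbb^{n-1}$, observes that the hypothesis makes every tangent line of the image curve $C'$ pass through the image point $\ell$ of $L$ (so $C'$ is a \emph{strange} curve), and invokes the Samuel--Hartshorne classification of strange curves to conclude $C'$ is a line, hence $C$ is planar. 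Your argument is more self-contained --- it is essentially the standard separability proof that strange curves do not exist in characteristic $0$, specialized to this situation --- whereas the paper outsources that content to a cited theorem. The trade-off is in positive characteristic: the paper's route still works whenever the strange-curve classification leaves only the line (any characteristic other than $2$), while your differential argument collapses as soon as $\pi_L|_C$ can be inseparable. Under the paper's standing characteristic-$0$ convention both are equally valid, and your bookkeeping (excising $C \cap L$ and $\Sing C$, both proper closed subsets since $C \not\subset L$) is handled correctly.
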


\begin{proof}
  For contradiction, assume all tangent lines of $C$ meet $L$. There exists $p \in L \setminus C$, otherwise $L \subset C$ and $C$ is planar. We consider the projection $\pi \cn \Pbb^n \setminus \{p\} \rarr \Pbb^{n-1}$ away from $p$. Let $\ell \in \Pbb^{n-1}$ be the image of $L$ and $C'$ the image of $C$. The incidence condition for $C$ and $L$ implies that all tangent lines of $C'$ pass through the point $\ell$. Hartshorne calls such curves \emph{strange} and proves they are either a line (in any characteristic) or a conic (in characteristic 2) \cite[Theorem IV.3.8]{Hartshorne} (the original result is attributed to \cite{Samuel}). We are excluding the latter case, so $C'$ must be a line. Then $C$ lies in the preimage of this line which is a plane, contradicting our hypotheses.
\end{proof}

\begin{corollary}
  \label{T:skew-tangents}
  Let $C \subset \Pbb^n$ be a reduced irreducible curve. If $C$ is non-planar, then any finite set of general tangent lines is pairwise disjoint.
\end{corollary}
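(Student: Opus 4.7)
The plan is to deduce the corollary from \cref{T:tangent-away-from-line} by a pairwise argument combined with a simple openness observation. Fix the number $k$ of tangent lines we want to choose, and consider tuples $(p_1, \dots, p_k) \in C_\sm^k$ together with the associated tangent lines $L_i = T_{p_i} C$. We want to show that for a general tuple, the lines $L_1, \dots, L_k$ are pairwise disjoint.

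First I would isolate the case $k = 2$. The incidence locus
\[
Z = \{ (p, q) \in C_\sm \times C_\sm \;|\; T_p C \cap T_q C \neq \emptyset \}
\]
is closed in $C_\sm \times C_\sm$, because the family of tangent lines forms an algebraic family of lines in $\Pbb^n$ and meeting is a closed incidence condition in the Grassmannian of lines. To see that $Z$ is proper, fix any $p \in C_\sm$ and apply \cref{T:tangent-away-from-line} with $L = T_p C$: since $C$ is non-planar, a general $q \in C$ has $T_q C \cap T_p C = \emptyset$, so the fiber of $Z \to C_\sm$ over $p$ is a proper closed subset. Hence $Z$ is a proper closed subvariety of $C_\sm \times C_\sm$ and its complement is open and dense.

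Next I would handle arbitrary $k$ by intersecting finitely many such open conditions. For each pair $1 \leq i < j \leq k$, let $U_{ij} \subset C_\sm^k$ denote the preimage of $(C_\sm \times C_\sm) \setminus Z$ under the $(i,j)$-projection $C_\sm^k \to C_\sm \times C_\sm$. By the previous paragraph each $U_{ij}$ is open and dense, so
\[
U = \bigcap_{1 \leq i < j \leq k} U_{ij}
\]
is a nonempty open (in fact dense) subset of $C_\sm^k$. For any tuple $(p_1, \dots, p_k) \in U$, the tangent lines $L_1, \dots, L_k$ are pairwise disjoint by construction, which is exactly the required conclusion.

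There is no real obstacle; the only point that deserves care is the closedness and properness of the pairwise incidence locus $Z$, but closedness is automatic from the algebraicity of the Gauss map $p \mapsto T_p C$ and properness is precisely the content of \cref{T:tangent-away-from-line}. Everything else is the standard fact that a finite intersection of dense opens in an irreducible variety is again dense.
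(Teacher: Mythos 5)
Your proof is correct and follows essentially the same route as the paper: both arguments reduce to \cref{T:tangent-away-from-line} and the fact that finitely many dense open conditions on an irreducible variety have nonempty intersection (the paper phrases this as an induction on the number of lines, you phrase it via the pairwise incidence locus $Z$ and its pullbacks, but the content is identical). Your explicit verification that $Z$ is closed and proper is a reasonable way to make precise what the paper leaves implicit.
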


\begin{proof}
  We induct on the number of tangent lines we need to construct. In the inductive step we invoke \cref{T:tangent-away-from-line} for each of the previously chosen lines.
\end{proof}

\begin{proposition}
  \label{T:secant-away-from-line}
  Let $C \subset \Pbb^n$ be a reduced irreducible curve and $L \subset \Pbb^n$ a line. If $C$ is non-planar, then a general secant line does not meet $L$.
\end{proposition}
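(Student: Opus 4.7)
The plan is to fix a general first endpoint $p \in C$ and argue that only finitely many choices of the second endpoint $q \in C$ produce a secant meeting $L$; since secants are parameterized by an irreducible $2$-dimensional family of pairs $(p,q)$ with $p \neq q$, this suffices. To set up the fixing step, I first note that $C$, being irreducible and non-planar, is not a line, so $L \not\subset C$ and $L \cap C$ is finite; in particular a general $p \in C$ lies off $L$.

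Given such a $p$, I will work in the $2$-plane $\Pi := \langle p, L \rangle$ and establish the key equivalence: for $q \in C \setminus \{p\}$, the secant $\overline{pq}$ meets $L$ if and only if $q \in \Pi$. The forward direction is immediate: if $x \in \overline{pq} \cap L$, then $p, x \in \Pi$ forces $\overline{pq} = \overline{px} \subset \Pi$, hence $q \in \Pi$. The reverse direction uses the fact that in the projective plane $\Pi$ any two lines intersect, so $\overline{pq}$ and $L$ must meet.

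With this equivalence in hand, the conclusion is short. Since $C$ is irreducible and non-planar, $C$ is not contained in the $2$-plane $\Pi$, so $C \cap \Pi$ is a finite set. Thus only finitely many $q \in C$ yield a secant meeting $L$, and a general $q$ avoids this finite locus.

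I do not anticipate a serious obstacle. The only mild subtlety is the degenerate case $\overline{pq} = L$, in which the two lines ``meet everywhere''; this happens precisely when $q \in C \cap L$, a finite set already excluded by the generic choice of $p$ and $q$, so it causes no trouble. The whole argument hinges on the projective-plane fact that two lines always meet, together with non-planarity of $C$ to ensure $C \cap \Pi$ is finite.
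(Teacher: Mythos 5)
Your proof is correct and follows essentially the same route as the paper's: fix a point $p \in C \setminus L$, observe that a secant through $p$ meets $L$ exactly when its second endpoint lies in the plane spanned by $p$ and $L$, and use non-planarity of $C$ to rule this out for general $q$. You simply spell out the finiteness of $C \cap \langle p, L\rangle$ and the parameterization of secants by $C \times C$ minus the diagonal, which the paper leaves implicit.
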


\begin{proof}
  Since $C$ is non-planar, there exists a point $p \in C \setminus L$. If all secants through $p$ meet $L$, then $C$ is contained in the plane spanned by $L$ and $p$ which is a contradiction.
\end{proof}

\begin{corollary}
  \label{T:skew-secants}
  Let $C \subset \Pbb^n$ be a reduced irreducible curve. If $C$ is non-planar, then any finite set of general secant lines is pairwise disjoint.
\end{corollary}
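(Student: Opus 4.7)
The plan is to mimic the proof of \cref{T:skew-tangents}, inducting on the number $m$ of secant lines, with \cref{T:secant-away-from-line} playing the role that \cref{T:tangent-away-from-line} played there.

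The base case $m = 1$ is vacuous: a single secant line is trivially ``pairwise disjoint'' from the (empty) collection of other secants. For the inductive step, suppose we have already chosen $m$ general secant lines $L_1, \dots, L_m$ of $C$ which are pairwise disjoint. We want to choose an $(m+1)$-th secant $L_{m+1}$ which misses each of $L_1, \dots, L_m$. By \cref{T:secant-away-from-line}, for each fixed $i$, the locus of secant lines of $C$ meeting $L_i$ is a proper closed subset of the variety of secant lines (this is exactly the content of the previous proposition, since $C$ is non-planar). The complement is therefore open and dense. Since the intersection of finitely many open dense subsets of an irreducible variety is again open and dense, a general secant $L_{m+1}$ avoids all of $L_1, \dots, L_m$ simultaneously, completing the induction.

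There is no real obstacle here: the only subtlety is making sure one works on an irreducible parameter space for secants (for instance the closure in $\Gr(1,n)$ of the image of the rational map $C \times C \drarr \Gr(1,n)$ sending a pair of distinct points to the line they span), so that ``general'' has an unambiguous meaning and finite intersections of dense opens remain dense. Once this is set up, the induction is immediate from \cref{T:secant-away-from-line}.
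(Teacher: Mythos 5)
Your proof is correct and follows essentially the same route as the paper: the paper's proof of \cref{T:skew-secants} is exactly an induction on the number of secant lines, invoking \cref{T:secant-away-from-line} for each previously chosen line, just as you do. Your added remark about working on an irreducible parameter space of secants is a reasonable clarification but does not change the argument.
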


\begin{proof}
  The argument is analogous to \cref{T:skew-tangents}.
\end{proof}

\begin{proposition}[Trisecant lemma]
  \label{T:trisecant-lemma}
  Let $C$ be a smooth curve. If $C$ is non-planar, then
  \begin{enumeratea}
  \item
    a general secant line is not trisecant, and
  \item
    a general tangent line intersects $C$ in a single point only.
  \end{enumeratea}

\end{proposition}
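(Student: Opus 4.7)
I prove both parts by dimension-counting closed subvarieties of the Grassmannian $\Gr(1,n)$ of lines in $\Pbb^n$, with the Castelnuovo--Harris uniform position principle as the key input.

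For (a), let $T \subset \Gr(1,n)$ denote the closed locus of trisecant lines. Two of the three collinear points determine the line and the third then lies in the finite set $L \cap C$, so $\dim T \leq 2$, which is also the dimension of the secant locus. The claim that the general secant is not trisecant amounts to $\dim T < 2$. I would assume for contradiction that $\dim T = 2$ and form the incidence
\[
I = \{(H, L) \in (\Pbb^n)^\vee \times T : L \subset H\}.
\]
The projection $I \to T$ has fibers $\Pbb^{n-2}$, so $\dim I = n$. After replacing $\Pbb^n$ by the linear span of $C$ (of dimension at least $3$ since $C$ is non-planar), $C$ is non-degenerate. The uniform position principle then says that for a general hyperplane $H$ the $d$ points of $H \cap C$ are in uniform position, and in particular no three are collinear: otherwise the transitive $S_d$-monodromy would force every triple to be collinear, putting all $d$ points on a common line and contradicting non-degeneracy. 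Hence the image of $I$ in $(\Pbb^n)^\vee$ has dimension at most $n-1$, and the fibers of this projection are finite because any collinear triple sits inside the finite set $H \cap C$. This forces $\dim I \leq n-1$, contradicting $\dim I = n$.

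For (b), tangent lines form only a $1$-dimensional family, so a verbatim incidence count is borderline and not directly decisive. Instead I would combine (a) with a projection argument. By (a), for a general $p \in C$ the projection $\pi_p \cn C \to C' \subset \Pbb^{n-1}$ from $p$ is birational onto its image, and its extension sends $p$ to $t_p := [T_p C] \in \Pbb^{n-1}$. If $T_p C$ met $C$ at a second point $q \neq p$, then $\pi_p(q) = t_p$ as well, so $t_p$ would be a singular point of $C'$. The remaining singularities of $C'$ arise from trisecants through $p$, which are finite for general $p$ by part (a). Varying $p$ in $C$, the distinguished point $t_p$ traces out essentially the Gauss image of $C$, while the other nodes of $C'(p)$ sweep out only a subvariety of codimension at least $1$ in the flat family of projected curves. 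For generic $p$ the point $t_p$ therefore cannot coincide with any of them, so $t_p$ must in fact be a smooth point of $C'$, i.e., $T_p C$ meets $C$ only at $p$.

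\textbf{Main obstacle.} Part (a) reduces to a clean dimension count once uniform position is available, and the only delicate step is quoting Castelnuovo--Harris correctly. Part (b) is genuinely subtler because the tangent-secant locus and the tangent locus have the same expected dimension; the real difficulty is promoting the projection/singularity sketch to a rigorous argument on the flat family $\{C'(p)\}_{p \in C}$, showing that the varying singularity $t_p$ cannot be swept up by the finitely many other nodes coming from trisecants.
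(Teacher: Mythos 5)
The paper itself disposes of this statement by citing Hartshorne (Proposition IV.3.8 and Theorem IV.3.9), so any self-contained argument is necessarily a different route. Your part (a) is such an argument and it is essentially correct: the incidence count $\dim I = \dim T + (n-2)$ against the finite-fiber projection to $(\Pbb^n)^\vee$, combined with uniform position (valid here since the paper works in characteristic $0$ and you have reduced to the non-degenerate case by passing to the span of $C$), is the standard modern proof of the trisecant lemma. The only step you should spell out is why a general hyperplane section cannot lie on a line: if it did, there would be $r-1 \geq 2$ independent linear forms vanishing on $H\cap C$ but not on $C$, giving $h^0(\Oc_C(1)(-H\cap C)) = h^0(\Oc_C) \geq 2$, a contradiction. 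This is routine.

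Part (b), however, has a genuine gap, and it sits exactly where you flagged the difficulty. After projecting from $p$, the statement to be proved is that $t_p = \pi_p(p)$ is a \emph{smooth} point of $C'$; the obstruction is a second point $q \in T_pC \cap C$, which by itself creates a singularity \emph{at} $t_p$. Your argument only shows that $t_p$ does not collide with the finitely many \emph{other} nodes of $C'$ (those coming from honest trisecants through $p$ with three distinct points of $C$). That is a different, and much weaker, assertion: a node at $t_p$ caused by $q \in T_pC$ is not one of those other nodes, so avoiding them proves nothing about it. Put differently, the locus of tangent lines meeting $C$ again is a subfamily of the $1$-dimensional family of tangent lines, and nothing in part (a) (which controls a codimension-one condition inside the $2$-dimensional secant family) prevents it from being all of it; indeed in positive characteristic there are non-planar curves every one of whose tangent lines is a multisecant, so any correct proof of (b) must invoke characteristic $0$ at this point. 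The standard repairs are either the strange-curve theorem of Samuel (Hartshorne, Theorem IV.3.9) — which the paper already uses in its Proposition \ref{T:tangent-away-from-line} — or a monodromy/generic-smoothness argument showing that if every tangent line met $C$ again, the tangential correspondence would force $C$ to be strange or planar. Your "sweeping" step does not substitute for either; as written it assumes the conclusion when it declares that "the remaining singularities of $C'$ arise from trisecants through $p$."
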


\begin{proof}
  This is a combination of \cite[Proposition IV.3.8]{Hartshorne} and \cite[Theorem IV.3.9]{Hartshorne}.
\end{proof}

The next result can be stated more easily using the following notion.

\begin{definition}
  Let $C \subset \Pbb^n$ be a smooth curve and $E$ a subbundle of the restriction of the ambient tangent bundle $T_{\Pbb^n}|_C$. Consider a second curve $D$ which shares no components with $C$ and the intersection $C \cap D$ lies in the smooth locus of $D$.
  
  We will say that $D$ is \emph{away} from $E$ at a point $p \in C \cap D$ if the tangent line $T_p D$ is not contained in the fiber $E_p$. We will say that $D$ is \emph{away} from $E$ if the previous condition holds at all points of intersection $p \in C \cap D$.

  If instead $E$ is a subbundle of the normal bundle $N_C$, we make the same definitions by replacing it with its preimage in $T_{\Pbb^n}|_C$.
\end{definition}

\begin{proposition}
  \label{T:general-secant-away}
  Let $C \subset \Pbb^n$ be a smooth irreducible curve and $E \subset T_{\Pbb^n}|_C$ a proper subbundle, that is, its rank is strictly less than $n$. If $C$ spans the ambient space $\Pbb^n$, then a general secant line is away from $E$.
\end{proposition}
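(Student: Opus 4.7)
The plan is to argue by a dimension count on the incidence variety $C \times C \setminus \Delta$. The geometric translation of ``away'' for a secant is this: a $k$-dimensional subspace $W \subset T_p \Pbb^n$ determines a unique $k$-dimensional linear subspace of $\Pbb^n$ through $p$ with tangent space $W$ at $p$. Applying this pointwise to $E_p \subset T_p\Pbb^n$, I attach to each $p \in C$ a linear subspace $\Lambda_p \subset \Pbb^n$ of dimension $\rank E < n$ passing through $p$. The direct unwinding of definitions is that the secant line $L = \overline{pq}$ fails to be away from $E$ at $p$ exactly when $q \in \Lambda_p$, and similarly at $q$ when $p \in \Lambda_q$.

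Next I would invoke the spanning hypothesis. Since $C$ is irreducible and spans $\Pbb^n$, while $\Lambda_p$ is a proper linear subspace, we have $C \not\subset \Lambda_p$, so the scheme-theoretic intersection $C \cap \Lambda_p$ is a finite set for every $p \in C$; by symmetry, $C \cap \Lambda_q$ is also finite for every $q$. Consequently the closed subvariety
\[
B = \{ (p,q) \in C \times C \setminus \Delta \;|\; q \in \Lambda_p \text{ or } p \in \Lambda_q \}
\]
has finite fibers over $C$ under either projection, so $\dim B \leq 1$. That $B$ is actually a subvariety uses that $E \subset T_{\Pbb^n}|_C$ is a subbundle, so the assignment $p \mapsto \Lambda_p$ is regular in $p$.

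To finish, I would appeal to the trisecant lemma (\cref{T:trisecant-lemma}): the secant map from $C \times C \setminus \Delta$ to the variety of lines in $\Pbb^n$ is generically two-to-one onto the $2$-dimensional secant variety of $C$. Since the image of $B$ has dimension at most $1$, a general point of the secant variety avoids it, and the corresponding line is away from $E$ at both of its points of intersection with $C$. The only delicate step is the dimension estimate on $B$, but this reduces immediately to the spanning hypothesis on $C$, which is precisely the content of the assumption. No trouble is expected beyond confirming that the fiberwise-finite intersection $C \cap \Lambda_p$ varies in families as $p$ moves, which is standard since $\Lambda_p$ is cut out by equations depending regularly on $p$.
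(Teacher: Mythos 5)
Your proof is correct and is essentially the paper's argument in different packaging: both hinge on integrating the fiber $E_p$ to the linear space $\Lambda_p \subset \Pbb^n$ through $p$ with $T_{\Lambda_p,p}=E_p$ (the paper's $H$), observing that the secant $\overline{pq}$ fails to be away from $E$ at $p$ exactly when $q \in \Lambda_p$, and using the spanning hypothesis to conclude $C \not\subset \Lambda_p$; the paper runs this as a contradiction showing the transversality locus is nonempty, while you bound the bad locus by a fiberwise dimension count (and the trisecant-lemma step at the end is superfluous, since genericity in $C \times C \setminus \Delta$ already suffices). One small wording slip: the union $B$ need not have finite fibers under a single projection—rather each of the two constituent loci $\{q \in \Lambda_p\}$ and $\{p \in \Lambda_q\}$ has finite fibers under its own projection, which still yields $\dim B \leq 1$.
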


\begin{proof}
  Let $\pr_1 \cn C \x C \rarr C$ denote the projection to the first factor. We construct a line bundle $M \subset \pr_1^\ast T_{\Pbb^n}|_C$ as follows. Consider a point $(p,q) \in C \x C$. If $p \neq q$, let $L_{p,q}$ denote the secant line joining $p$ and $q$, and if $p = q$, then $L_{p,p}$ will stand for the tangent line of $C$ at $p$. The bundle $M$ is defined by saying its fiber over $(p,q)$ is $T_{L_{p,q},p} \subset T_{\Pbb^n,p}$. This construction can be made precise by assembling the lines $L_{p,q}$ into a $\Pbb^1$-bundle over $C \x C$.

  Note that $L_{p,q}$ is away from $E$ at $p$ if and only if $M_{p,q}$ is not contained in $E_p$. Let $U \subset C \x C$ denote the open locus where $M$ is transverse to $\pr_1^\ast E$. Since the problem is symmetric in the two points on the secant, it is easy to reverse it. Let $\sigma \cn C \x C \rarr C \x C$ denotes the swap morphism. A secant line $L_{p,q}$ is away from $E$ if and only if $(p,q)$ lies in $U \cap \sigma(U)$.

  It suffices to show that $U$ is not empty to infer that neither is $U \cap \sigma(U)$ which completes our argument. For contradiction, assume that $M \subset \pr_1^\ast E$. Fix a point $p \in C$ and let $H$ be the unique linear space through $p$ of dimension $\rank E$ such that $T_{H, p} = E_p$. Consider another point $q \in C$ and the secant (tangent) line $L_{p,q}$. Since $M \subset \pr_1^\ast E$ and $M_{p,q} = T_{L_{p,q},p}$, it follows that $L_{p,q}$ is contained in $H$. A fortiori, the point $q$ lies in $H$, hence so does the entire curve $C$. We are given that $C$ spans the ambient $\Pbb^n$, so $H = \Pbb^n$ contradicting the properness of $E$.
\end{proof}

We are now ready to prove the second main result of this section.

\begin{proof}[Proof of \cref{T:curve-secant-interpolation}]
  We proceed analogously to the Proof of \cref{T:two-curves-interpolation}. Pick a modification datum $V_i \subset N_{C, p_i}$ away from $S = C \cap L$, so $\h^0(M(N_C, V_i)) = \h^0(N_C) - \sum_i \lambda_i$ and $\h^1(M(N_C, V_i)) = 0$. Since $N_X$ and $N_C$ are identical on $C \setminus S$, this gives us a modification datum for $N_X$ and we aim to show the analogous statement $h^0(M(N_X, V_i)) = h^0(N_X) - \sum_i \lambda_i$.

  First, we deduce
  \[
  \h^0(M(N_X|_C, V_i)) = \h^0(N_X|_C) - \sum \lambda_i.
  \]
  If we assume $\H^1(M(N_X, V_i)) = 0$, then
  \[
  \h^0(M(N_X, V_i)) = \h^0(N_X) - \sum \lambda_i,
  \]
  which is the desired interpolation property for $N_X$. It suffices to show the morphism $\H^0(N_X|_L) \rarr \H^0(N_X|_S)$ is surjective. But $N_L \cong \Oc_L(1)^{\oplus(n-1)}$ and $N_X|_L \cong \Oc_L(1)^{\oplus(n-3)} \oplus \Oc_L(2)^{\oplus 2}$ or $N_X|_L \cong \Oc_L(1)^{\oplus(n-2)} \oplus \Oc_L(3)$. In each of the cases, the degrees of all components are bounded from below by $1$ and the statement holds.

  For the second claim we need to prove, consider a secant line $L$ to $C$ whose two tangent lines are skew. \cref{T:skew-tangents} shows this condition is satisfied for a general secant line. If we assume $C$ satisfies $\lambda$-interpolation, our goal is to show that $X = C \cup L$ satisfies $(\lambda,2)$-interpolation via the addition of a point and codimension $2$ space on $L$. It suffices to show that $N_X|_L \cong \Oc_L(1)^{\oplus (n-3)} \oplus \Oc_L(2)^{\oplus 2}$ and not $N_X|_L \cong \Oc_L(1)^{\oplus(n-2)} \oplus \Oc_L(3)$. The normal bundle $N_X|_L$ remains unchanged if we replace $C$ with the union $L_1 \cup L_2$ of the two tangent lines to $C$ at $C \cap L$. Let $P_1$ denote the 2-plane spanned of $L$ and $L_1$, $P_2$ the 2-plane spanned by $L$ and $L_2$, and $P$ the 3-plane spanned by $L$, $L_1$ and $L_2$. We can use these to show that
  \[
  N_X|_L \cong N_{(L \cup L_1)/P_1}|_L \oplus N_{(L \cup L_2)/P_2}|_L \oplus N_{P/\Pbb^n}|_L.
  \]
  By studying the union of two distinct lines in the plane, we arrive at the conclusion that the three summands are isomorphic to $\Oc_L(2)$, $\Oc_L(2)$ and $\Oc_L(1)^{\oplus(n-3)}$ respectively. This completes our claim.
\end{proof}


\section{Remarks on Hilbert schemes}
\label{S:Hilbert-schemes}

The results from \cref{S:nodal-curves} allow us to prove interpolation for some special reducible nodal curves. Building on knowledge about simpler curves, we can use these ideas to deduce interpolation properties for a wider class of reducible nodal curves. On the other hand, we would like to prove statements about smooth curves. In this section we demonstrate how to transfer information using a family of curves, hence opening the opportunity for the degeneration arguments customary in algebraic geometry. Hilbert schemes play a central role in this discussion.

For any triple $d$, $g$ and $n$, we can construct the Hilbert scheme $\Hc_{d,g,n}$ whose points parametrize curves of degree $d$ and genus $g$ in $\Pbb^n$. We consider the locus of smooth irreducible curves $\Ic_{d,g,n}' \subset \Hc_{d,g,n}$, and its closure $\Ic_{d,g,n}$ also referred to as the \emph{restricted Hilbert scheme}.

Suppose we have a nodal curve $C$ such that $[C] \in \Hc_{d,g,n}$. In order to use $C$ to reach any statements about smooth curves, we need to verify it lies in $\Ic_{d,g,n}$. Hartshorne and Hirschowitz consider this problem in \cite{Hartshorne-Hirschowitz}. We recall several useful statements from their work.

\begin{definition}
  \label{T:strongly-smoothable}
  We call a curve $C \subset \Pbb^n$ \emph{strongly smoothable} if there exists a flat family $\Cc \rarr \Pbb^n \x B$ with special fiber $C$ such that
  \begin{enumeratea}
  \item
    the general member $\Cc_t$ is smooth,
  \item
    the total space $\Cc$ is smooth and irreducible, and
  \item
    the base $B$ is smooth.
  \end{enumeratea}
\end{definition}

\begin{remark}
  Being strongly smoothable is stronger than containment in $\Ic_{d,g,n}$, also called (regular) smoothability.
\end{remark}

\begin{proposition}
  \label{T:curve-secant-smoothable}
  Let $C$ be a smooth curve in $\Pbb^n$ with $\H^1(N_C) = 0$. If $L$ is a line meeting $C$ transversely in one or two points, then $\H^1(N_X) = 0$ and $X = C \cup L$ is strongly smoothable.
\end{proposition}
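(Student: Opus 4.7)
My plan is to prove the two assertions in sequence: first the vanishing $\H^1(N_X) = 0$ by a Mayer-Vietoris argument on the decomposition $X = C \cup L$, and then strong smoothability from this vanishing via the standard Hartshorne-Hirschowitz criterion.

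For the vanishing, consider the Mayer-Vietoris sequence
\[
0 \to N_X \to N_X|_C \oplus N_X|_L \to N_X|_S \to 0,
\]
where $S = C \cap L$ consists of one or two points. On $C$, \cref{T:normal-bundle-component} provides
\[
0 \to N_C \to N_X|_C \to T^1_S \to 0;
\]
the hypothesis $\H^1(N_C) = 0$ together with $T^1_S$ being a skyscraper yields $\H^1(N_X|_C) = 0$ and surjectivity of $\H^0(N_X|_C) \to \H^0(T^1_S)$. On $L \cong \Pbb^1$, the analogous sequence exhibits $N_X|_L$ as an extension of the skyscraper $T^1_S$ by $N_L \cong \Oc_L(1)^{\oplus(n-1)}$. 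Writing $N_X|_L = \bigoplus_j \Oc_L(a_j)$ by Birkhoff-Grothendieck, the inclusion of $N_L$ as a full-rank subsheaf forces every $a_j \geq 1$; otherwise the projection to a factor of nonpositive degree would vanish on $N_L$ (as $\Hom(\Oc(1), \Oc(a_j)) = 0$ for $a_j \leq 0$), contradicting generic injectivity. Consequently $\H^1(N_X|_L) = 0$, and since $|S| \leq 2$, evaluation on $\H^0$ at $S$ is surjective on each summand. Plugging both facts into the Mayer-Vietoris long exact sequence gives $\H^1(N_X) = 0$.

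For strong smoothability, I invoke the standard Hartshorne-Hirschowitz criterion: an lci nodal curve $X \subset \Pbb^n$ is strongly smoothable whenever $\H^1(N_X) = 0$ and the map $\H^0(N_X) \to \H^0(T^1_X)$ onto the sheaf of local smoothings is surjective. The first condition is already established. For the second, given any target $t \in \H^0(T^1_S) = \H^0(T^1_X)$, I lift it to $\alpha \in \H^0(N_X|_C)$ using the surjectivity above, then choose $\beta \in \H^0(N_X|_L)$ with $\beta|_S = \alpha|_S$ in $N_X|_S$; such $\beta$ exists because evaluation $\H^0(N_X|_L) \to N_X|_S$ is surjective. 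The compatible pair $(\alpha, \beta)$ glues via Mayer-Vietoris to a global section of $N_X$ whose image in $T^1_X$ is $t$.

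The main technical care lies in identifying the map $N_X \to T^1_X$ correctly (it is induced by restriction to either component composed with the surjection onto $T^1_S$) and invoking the precise form of the Hartshorne-Hirschowitz criterion, since the three conditions in \cref{T:strongly-smoothable} — smoothness of the general fiber, smoothness and irreducibility of the total space, and smoothness of the base — go beyond plain cohomology vanishing. No individual step is deep, but assembling them requires attention to the compatibility of the restrictions $N_X|_C$ and $N_X|_L$ at the nodes in $S$.
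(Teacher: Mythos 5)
Your argument is correct, and it is essentially the standard Hartshorne--Hirschowitz argument: the paper itself gives no proof of this proposition but simply recalls it from \cite{Hartshorne-Hirschowitz}, and your reconstruction — Mayer--Vietoris plus the sequences $0 \to N_C \to N_X|_C \to T^1_S \to 0$ and $0 \to N_L \to N_X|_L \to T^1_S \to 0$ to get the $\H^1$ vanishing, then surjectivity of $\H^0(N_X) \to \H^0(T^1_X)$ to smooth the nodes — is exactly how that source proceeds. The one step worth writing out if you expand this is the passage from the two cohomological conditions to the three conditions of \cref{T:strongly-smoothable} (smoothness of the Hilbert scheme at $[X]$ from $\H^1(N_X)=0$, plus versality of the first-order smoothing at each node giving smoothness of the total space), which you correctly flag but do not carry out.
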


\begin{proposition}
    \label{T:curve-lines-smoothable}
  Let $X = C \cup L_1 \cup \cdots \cup L_a \subset \Pbb^n$ be the union of a smooth irreducible curve $C$ with lines, each meeting $C$ transversely in a single point. If $a \geq \h^1(N_C) + 1$, then $X$ is strongly smoothable. If in addition $\H^1(N_C) = 0$ or the lines $L_i$ are general, then $\H^1(N_X) = 0$.
\end{proposition}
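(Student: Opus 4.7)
My plan is to induct on $a$, using a cohomological analysis at each newly attached line; strong smoothability will then follow from the vanishing $\h^1(N_X) = 0$ combined with standard node-smoothing deformation theory.

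For the inductive step, write $X = X' \cup L_a$ where $X' = C \cup L_1 \cup \cdots \cup L_{a-1}$ meets $L_a$ transversely at a single point $p$. I would exploit two short exact sequences: the Mayer--Vietoris-type sequence obtained by tensoring the normalization-at-$p$ sequence with $N_X$,
\[
0 \to N_X \to N_X|_{X'} \oplus N_X|_{L_a} \to N_{X,p} \to 0,
\]
together with the analog of \cref{T:normal-bundle-component} with $X'$ playing the role of one component,
\[
0 \to N_{X'} \to N_X|_{X'} \to T^1_p \to 0.
\]
The splitting-type analysis of $N_X|_{L_a}$ carried out in the proof of \cref{T:curve-secant-interpolation} shows every summand has degree at least $1$, so $\H^0(N_X|_{L_a}) \twoheadrightarrow N_{X,p}$, and the first sequence gives $\H^1(N_X) \cong \H^1(N_X|_{X'})$. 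From the second sequence, $\h^1(N_X|_{X'}) \leq \h^1(N_{X'})$, with a drop of at least $1$ precisely when the connecting map $\H^0(T^1_p) \to \H^1(N_{X'})$ is nonzero.

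The vanishing claim now splits into two cases. If $\h^1(N_C) = 0$, induction yields $\h^1(N_{X_k}) = 0$ at every stage with no genericity required on the $L_i$, since $\h^1(N_{X_k}) \leq \h^1(N_{X_{k-1}}) = 0$ is automatic. If instead the $L_i$ are general, the key technical point is that the connecting map $\H^0(T^1_p) \to \H^1(N_{X'})$ is nonzero whenever $\h^1(N_{X'}) \neq 0$. I would prove this by a Serre-duality argument: the connecting map dualizes to a pairing of a section of $N_{X'}^\vee \otimes \omega_{X'}$ against the direction of $L_a$ at $p$, and if it vanished for all general pairs $(p, L_a)$, the corresponding section would vanish identically, contradicting $\h^1(N_{X'}) \neq 0$. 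This is the main obstacle, as it requires careful bookkeeping in the reducible setting of $X'$: the previously attached lines must be placed so that the sections of $N_{X'}^\vee \otimes \omega_{X'}$ cannot simultaneously vanish in every tangent direction at every admissible attachment point on $C$. Iterating, each of the first $\h^1(N_C)$ general lines kills a dimension of $\H^1$, and the remaining $a - \h^1(N_C) \geq 1$ lines preserve the vanishing.

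For strong smoothability, once $\h^1(N_X) = 0$ is in hand the deformation theory of $X$ in $\Pbb^n$ is unobstructed. Each node of $X$ is an ordinary double point admitting the standard local smoothing $xy = t$, and absence of global obstruction allows all nodes to be smoothed simultaneously; the versal family then realizes the three smoothness conditions of \cref{T:strongly-smoothable}. For configurations where the $L_i$ are not chosen generally and $\h^1(N_X)$ may fail to vanish, I would deform $X$ within a larger flat family to a configuration with general $L_i$, apply the preceding argument to get strong smoothability there, and then spread that family back to cover $X$ itself as a limit.
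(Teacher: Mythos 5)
A preliminary remark: the paper offers no proof of this proposition --- it is recalled, together with \cref{T:strongly-smoothable} and \cref{T:curve-secant-smoothable}, from Hartshorne--Hirschowitz --- so your argument must stand entirely on its own. The cohomological half of your plan is sound in outline: the two exact sequences, the isomorphism $\H^1(N_X) \cong \H^1(N_X|_{X'})$ coming from the positivity of $N_X|_{L_a}$, and the one-dimensional drop controlled by the connecting map $\H^0(T^1_p) \rarr \H^1(N_{X'})$ are the right skeleton for the $\H^1$-vanishing statement, and the Serre-duality nonvanishing you defer is indeed its technical heart (though as written it is a plan, not a proof).

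The genuine gap is in the smoothability claims. The assertion that $\h^1(N_X) = 0$ lets you ``smooth all nodes simultaneously'' is false as stated: vanishing of $\H^1(N_X)$ makes the Hilbert scheme smooth at $[X]$, but it does not guarantee that any global section of $N_X$ restricts to a smoothing direction in $T^1_{p_i}$ at each node. What is required is surjectivity of the local-to-global map $\H^0(N_X) \rarr \H^0(T^1_X) = \bigoplus_i T^1_{p_i}$, equivalently the vanishing of $\H^1$ of the equisingular normal sheaf $N_X' = \ker(N_X \rarr T^1_X)$; this is precisely why Hartshorne--Hirschowitz formulate their smoothing criterion in terms of $N_X'$ rather than $N_X$, and it does not follow from your two exact sequences. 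Indeed, the composite $\H^0(N_X|_C) \rarr \H^0(T^1_X)$ through $C$ is surjective exactly when the connecting maps you want to be \emph{nonzero} are zero, so the surjectivity has to be extracted from the lines and genuinely argued. Second, the proposition claims strong smoothability with no genericity hypothesis on the $L_i$, i.e.\ precisely in situations where $\h^1(N_X)$ may be nonzero. Your fallback of deforming to general lines and ``spreading the family back'' yields at best containment of $[X]$ in the closure of the smooth locus; it cannot produce what \cref{T:strongly-smoothable} demands --- a family through $X$ with smooth irreducible total space --- because the family that merely moves the lines has total space singular along the attachment loci. Constructing the required family through the special configuration is the actual content of the Hartshorne--Hirschowitz induction, and your argument never makes essential use of the inequality $a \geq \h^1(N_C) + 1$ (as opposed to, say, $a \geq \h^1(N_C)$), which is a symptom of this missing step.
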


By studying the proofs of these statements, there is a simple way to combine some aspects from each.

\begin{proposition}
  \label{T:curve-secant-lines-smoothable}
  Let $C \subset \Pbb^n$ be a smooth irreducible curve, and $L_1, \dots, L_a, L_1', \dots, L_b'$ be lines such that
  \begin{enumeratea}
  \item
    each $L_i$ meets $C$ in a single point,
  \item
    each $L_j'$ is a secant to $C$ (it meets $C$ in exactly two points),
  \item
    all lines $L_i$, $L_j'$ are disjoint, and
  \item
    any line intersects $C$ transversely.
  \end{enumeratea}
  Consider the union curve $X = C \cup L_1 \cup \cdots \cup L_a \cup L_1' \cup \cdots \cup L_b'$. If $\H^1(N_C) = 0$, then $\H^1(N_X) = 0$ and $X$ is strongly smoothable.
\end{proposition}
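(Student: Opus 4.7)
The plan is to adapt the proofs of \cref{T:curve-secant-smoothable} and \cref{T:curve-lines-smoothable} in parallel, since both arguments share the same skeleton: analyze $N_X$ restricted to each component, show that $\H^1(N_X) = 0$ by gluing those local computations, and then invoke the Hartshorne--Hirschowitz smoothing machinery. The only new ingredient is that we must handle single-point attachments and secant attachments simultaneously.

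First, I would compute $N_X$ on each component. On each attached line $L_i$, the sequence of \cref{T:normal-bundle-component} forces $N_X|_{L_i} \cong \Oc_{L_i}(1)^{\oplus(n-2)} \oplus \Oc_{L_i}(2)$; on each secant $L_j'$, the analysis of \cref{T:curve-secant-interpolation} gives $N_X|_{L_j'} \cong \Oc_{L_j'}(1)^{\oplus(n-3)} \oplus \Oc_{L_j'}(2)^{\oplus 2}$ or $\Oc_{L_j'}(1)^{\oplus(n-2)} \oplus \Oc_{L_j'}(3)$. In every case each line-bundle summand has degree $\geq 1$, so these restrictions are nonspecial and globally generated at their nodes. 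On $C$, the sequence $0 \to N_C \to N_X|_C \to T^1_S \to 0$ (with $S$ the set of all $a+2b$ nodes) combined with $\H^1(N_C) = 0$ gives $\H^1(N_X|_C) = 0$.

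Second, I would combine these by tensoring the Mayer--Vietoris sequence for the nodal curve $X$ with the locally free sheaf $N_X$, which yields
\[
  0 \to N_X \to N_X|_C \oplus \bigoplus_i N_X|_{L_i} \oplus \bigoplus_j N_X|_{L_j'} \to N_X|_S \to 0.
\]
Every summand in the middle term has vanishing $\H^1$ by the first step, and the induced map on $\H^0$ surjects onto $N_X|_S = \bigoplus_{p \in S}(N_X)_p$ because each line component is globally generated at its nodes. The long exact sequence then gives $\H^1(N_X) = 0$.

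Third, strong smoothability would follow as in the proofs of \cref{T:curve-secant-smoothable} and \cref{T:curve-lines-smoothable}. The vanishing $\H^1(N_X) = 0$ makes $[X]$ a smooth point of the Hilbert scheme, so any first-order deformation extends to all orders. To construct a smoothing, one needs a first-order deformation opening every node; in the Hartshorne--Hirschowitz framework this reduces to the surjectivity of $\H^0(N_X) \to \H^0(T^1_X)$, which is controlled by the same long exact sequences used in the second step. The hard part will be justifying cleanly that all $a + 2b$ nodes can be smoothed simultaneously: independence across distinct lines is immediate from the disjointness hypothesis and matches \cref{T:curve-lines-smoothable} verbatim, whereas the two nodes sharing a single secant must be argued as in the proof of \cref{T:curve-secant-smoothable}. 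Since these two arguments concern disjoint loci in $X$, they do not interfere, and the combined smoothing is produced by assembling the local smoothing data at each node and extending globally.
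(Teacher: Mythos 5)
Your proposal is correct and follows exactly the route the paper intends: the paper itself offers no written proof beyond the remark that one should combine the arguments of \cref{T:curve-secant-smoothable} and \cref{T:curve-lines-smoothable}, and your combination — component-wise computation of $N_X$ on each line, the decomposition sequence for the nodal curve to get $\H^1(N_X)=0$, and the Hartshorne--Hirschowitz criterion (surjectivity of $\H^0(N_X)\to\H^0(T^1_X)$, which follows from $\H^1(N_C)=0$ together with the fact that every summand of $N_X|_{L}$ has degree at least $1$) — is the standard way to do it. No gaps.
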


It is often convenient to refer to a general (smooth) curve. Since we are discussing the Hilbert scheme of curves, this is an appropriate place to elaborate on the meaning of ``general''. If $\Ic_{d,g,n}$ is irreducible, then referring to a general curve with the given characteristics in unambiguous. If however that is not the case, we might want to make separate claims for each component. In such cases, we avoid using the adjective ``general'' and attempt to be more specific. The good news is most cases of interest fall in the former category.

Combining the irreducibility of $\Mc_g$ (see \cite{Deligne-Mumford}) with Riemann-Roch yields an irreducibility statement for $\Ic_{d,g,n}$. For further discussion of this and other related results, we refer to \cite{Harris-Montreal-notes}.

\begin{theorem}
  \label{T:Hilbert-scheme-irreducibility}
  If $d \geq 2g-1$ and $d \geq n + g$, the space $\Ic_{d,g,n}$ is irreducible.
\end{theorem}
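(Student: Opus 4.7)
The plan is to realize $\Ic_{d,g,n}'$ as the image of an irreducible parameter space built as a tower of fibrations over $\Mc_g$, and then pass to its closure. The irreducibility of $\Mc_g$ due to Deligne--Mumford is the starting point, and Riemann--Roch controls the fiber structure at each stage.

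First I would form the universal relative Picard space $\mathcal{P}ic^d \to \Mc_g$, working stackily or restricting to the dense open locus of automorphism-free curves (irreducibility is insensitive to the choice). Its fibers are Jacobians, which are irreducible of dimension $g$, so $\mathcal{P}ic^d$ itself is irreducible. The hypothesis $d \geq 2g-1$ implies via Riemann--Roch and Serre duality that every line bundle $L$ of degree $d$ on every smooth genus $g$ curve satisfies $\h^0(L) = d - g + 1$ and $\h^1(L) = 0$. By cohomology and base change, the pushforward of the universal line bundle is then a vector bundle $\mathcal{V}$ of rank $d - g + 1$ on $\mathcal{P}ic^d$. Form the total space $\mathcal{U}$ of the direct sum $\mathcal{V}^{\oplus(n+1)}$; its points parametrize tuples $(C, L, s_0, \ldots, s_n)$ with $s_i \in \H^0(L)$. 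The second inequality $d \geq n + g$ reads $n + 1 \leq d - g + 1$, so $\mathcal{U}$ has positive rank over $\mathcal{P}ic^d$ and is itself irreducible.

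Inside $\mathcal{U}$ sits the open locus $\mathcal{U}^{\mathrm{emb}}$ where the sections $s_i$ have no common zero and define a closed immersion $C \hookrightarrow \Pbb^n$. Sending such a tuple to its scheme-theoretic image gives a morphism $\mathcal{U}^{\mathrm{emb}} \to \Ic'_{d,g,n}$. This morphism is surjective, since every smooth irreducible $X \subset \Pbb^n$ of degree $d$ and genus $g$ arises from $L = \Oc_X(1)$ together with any ordered basis of the image of $\H^0(\Oc_{\Pbb^n}(1)) \to \H^0(L)$ (allowing linearly dependent tuples when $X$ is degenerate). Once $\mathcal{U}^{\mathrm{emb}}$ is known to be non-empty, its irreducibility as an open subscheme of $\mathcal{U}$ transfers to the image $\Ic'_{d,g,n}$ and therefore to its closure $\Ic_{d,g,n}$.

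The main obstacle is exactly this non-emptiness: one must exhibit at least one smooth genus $g$ curve that actually embeds in $\Pbb^n$ via a linear series of degree $d$. When $d \geq 2g+1$ this is easy, because every $L$ of degree $d$ is very ample and so the complete series embeds $C$ into $\Pbb^{d-g}$; a general $(n+1)$-dimensional subspace of $\H^0(L)$ then projects this to an embedding in $\Pbb^n$ via the general projection lemma (for $n \geq 3$; small $n$ is either vacuous under the hypotheses or easy to handle by hand). The boundary range $d \in \{2g-1, 2g\}$ is more delicate and would appeal to line bundles of the form $K_C + D$ for $D$ effective of appropriate degree on a general curve, combined with standard Brill--Noether results; this is the step I would offload to \cite{Harris-Montreal-notes} rather than redo in detail.
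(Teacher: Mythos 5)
Your proposal is correct and is exactly the standard fleshing-out of the one-line sketch the paper itself gives for this theorem (it offers no proof, only the remark that irreducibility of $\Mc_g$ combined with Riemann--Roch yields the statement, deferring details to the Montreal notes): an irreducible tower over $\Mc_g$ through the relative Picard space and the bundle of sections of the universal line bundle, with $d \geq 2g-1$ guaranteeing nonspeciality and $d \geq n+g$ guaranteeing enough sections, surjecting onto $\Ic_{d,g,n}'$. The one genuinely delicate point, which you correctly isolate, is non-emptiness in the boundary range $d \in \{2g-1, 2g\}$ --- where the locus can in fact be empty (e.g.\ $(d,g,n)=(5,3,2)$, since $\h^0(L)=3$ forces the curve into a plane where its genus would be $6$), so the statement there must be read as ``irreducible when non-empty'' --- and your deferral of that existence question to the cited references matches the paper's own level of detail.
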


Note that when $g = 1$, the hypotheses of \cref{T:Hilbert-scheme-irreducibility} are always satisfied for an elliptic curve which spans the ambient space.

Space curves are another interesting source of examples. There has been a sequence of results extending the range from \cref{T:Hilbert-scheme-irreducibility}. The strongest version we are aware of is due to S.J.~Keem and C.~Kim.

\begin{theorem}[Kim-Keem \cite{Keem-Kim, Keem}]
  \label{T:Hilbert-scheme-irreducibility-P3}
  The space $\Ic_{d,g,3}$ is irreducible in the following cases:
  \begin{enumerate}[(a)]
  \item
    $d \geq g + 3$ and $g \geq 0$,
  \item
    $d = g + 2$ and $g \geq 5$,
  \item
    $d = g + 1$ and $g \geq 9$.
  \end{enumerate}
\end{theorem}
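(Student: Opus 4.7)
The plan is to reduce the irreducibility of $\Ic_{d,g,3}$ to two separate irreducibility statements: one for the abstract moduli $\Mc_g$ and one for the Brill-Noether variety of linear series on a general curve. Forgetting the embedding gives a rational map $\Ic_{d,g,3} \drarr \Mc_g$, and over a general $[C] \in \Mc_g$ the fiber is a $\PGL_4$-torsor over the open subscheme of $G^3_d(C)$ consisting of very ample linear series $V$ whose image is a smooth degree $d$ curve. Hence irreducibility of $\Ic_{d,g,3}$ follows from (i) irreducibility of $\Mc_g$, (ii) irreducibility of $G^3_d(C)$ for general $C$, and (iii) the statement that every component of $\Ic_{d,g,3}$ dominates $\Mc_g$.

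For (i) I would cite \cite{Deligne-Mumford}. For (ii), the Brill-Noether number in our three ranges is
\[
\rho(g,3,d) = g - 4(g-d+3) = 4d - 3g - 12,
\]
which equals $g$, $g-4$, and $g-8$ in cases (a), (b), (c) respectively, and is therefore nonnegative under the given hypotheses (in fact strictly positive except in the boundary case $d = g+3$ with $g = 0$). By Gieseker-Fulton-Lazarsfeld, this forces $G^3_d(C)$ to be smooth and irreducible of the expected dimension for general $C$. One must also verify that the very ample locus is open and dense, which is classical since $d$ is large enough relative to $g$ to guarantee that a general $V \in G^3_d(C)$ maps $C$ birationally onto its image.

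The hard step is (iii): ruling out ``extraneous'' components whose general member fails to correspond to a Brill-Noether general curve. Such components historically arise from smooth curves constrained to lie on special surfaces in $\Pbb^3$: on a smooth quadric, curves of type $(a,b)$ have $d = a+b$ and $g = (a-1)(b-1)$, and on a cubic surface one obtains families described by the Picard group of the surface. For each such surface-based family, the strategy is to compute its dimension and compare it with the dimension of the main Brill-Noether component, then exhibit an explicit deformation showing that the surface-based curves degenerate within the main component rather than forming an isolated one. The numerical bounds $d \geq g+3$, $d = g+2$ with $g \geq 5$, and $d = g+1$ with $g \geq 9$ are precisely the thresholds beyond which the main component has room to absorb every surface-based family. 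This dimension-count-plus-deformation argument is the technical core of \cite{Keem-Kim, Keem}, and to give a self-contained proof I would follow their case analysis rather than attempt a uniform argument.
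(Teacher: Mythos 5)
The paper does not prove this theorem: it is imported verbatim from the cited work of Keem and Kim--Keem, so there is no internal argument to compare yours against. Judged on its own terms, your reduction is the standard and correct one --- irreducibility of $\Ic_{d,g,3}$ follows from irreducibility of $\Mc_g$, irreducibility of $G^3_d(C)$ for general $C$, and the assertion that every component of $\Ic_{d,g,3}$ dominates $\Mc_g$ --- and your Brill--Noether numerology checks out: $\rho(g,3,d) = 4d - 3g - 12$ equals $g$, $g-4$, $g-8$ in the three cases, so Gieseker smoothness plus Fulton--Lazarsfeld connectedness does give a unique component whose general member has general moduli.

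The genuine gap is that step (iii) is not a technicality to be deferred; it \emph{is} the theorem. Everything before it only shows there is exactly one component dominating $\Mc_g$, which is true for all $(d,g)$ with $\rho \geq 0$ and says nothing about the thresholds $g \geq 5$ and $g \geq 9$ in cases (b) and (c). Those bounds come precisely from the analysis of components whose general curve is Brill--Noether special (e.g.\ lying on quadric or cubic surfaces, or carrying special incomplete $g^3_d$'s with $h^1 > 0$, which is the generic situation when $d = g+1$ or $g+2$), and your proposal resolves this by saying you would follow the case analysis of the very references being proved --- which makes the argument circular as a proof, though honest as a reading guide. A smaller inaccuracy: for $d = g+1$ and $d = g+2$ the relevant $g^3_d$'s are special and typically incomplete, so the justification ``$d$ is large enough relative to $g$'' for density of the very ample locus does not apply; very ampleness of a general member of $G^3_d(C)$ in this range itself requires a Brill--Noether-type argument. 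Since the paper also takes the theorem on faith, citing Keem--Kim as you do is the appropriate resolution, but the proposal should not be mistaken for an independent proof.
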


\begin{figure}[h!]
  \centering
  \includegraphics[width=0.75\textwidth]{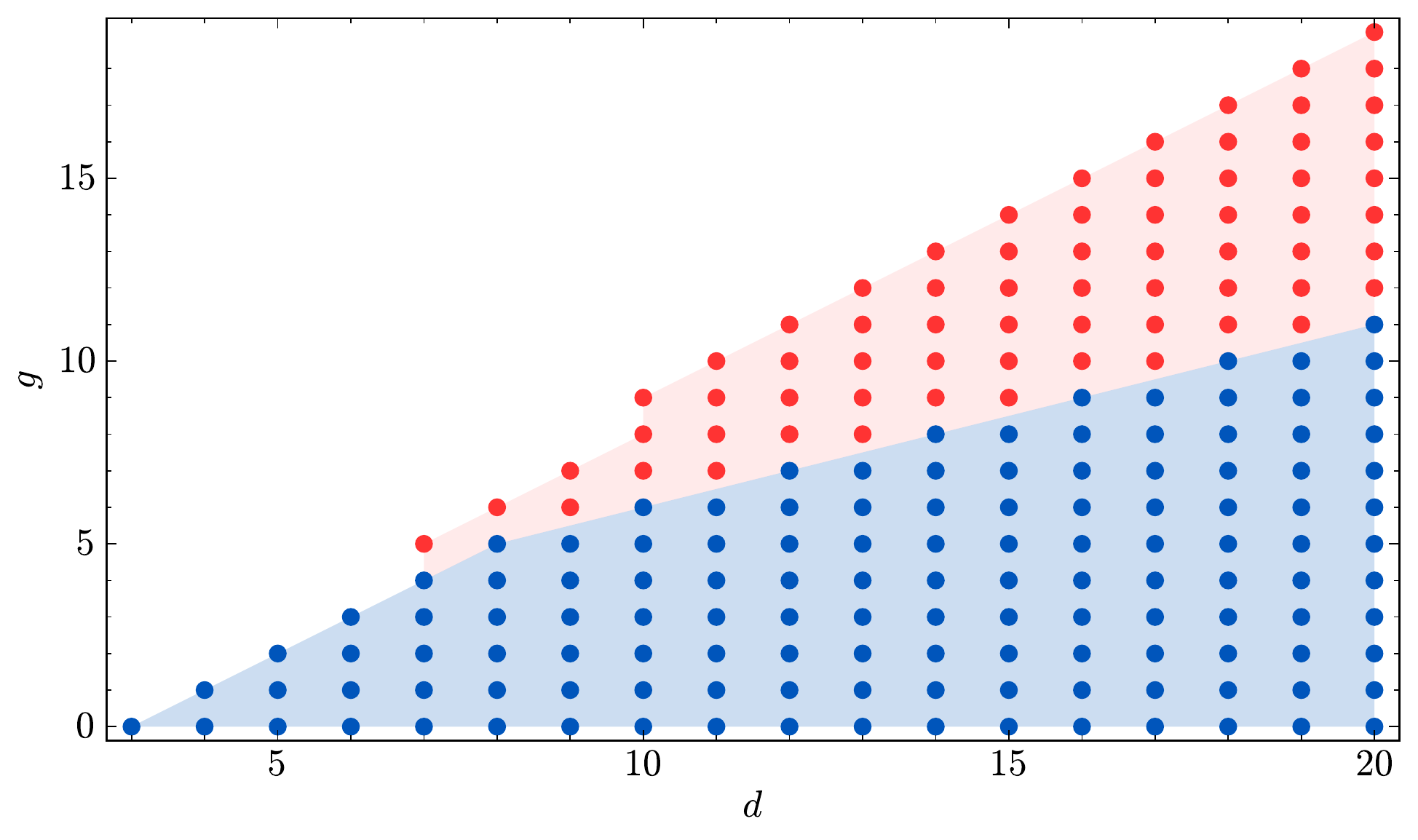}
  \caption{The shaded region of the $(d,g)$-plane illustrates the cases for which we know $\Ic_{d,g,3}$ is irreducible. \cref{T:Hilbert-scheme-irreducibility} corresponds to the blue region (darker in grayscale). The additional part covered by \cref{T:Hilbert-scheme-irreducibility-P3} is in red (lighter in grayscale).}
  \label{F:Hilbert-P3}
\end{figure}
  
The most useful result of this section says that interpolation is an open property. We can then use degeneration arguments to demonstrate interpolation.

\begin{theorem}
  \label{T:interpolation-open}
  Let $\pi \cn \Cc \rarr B$ be a flat family of curves and $E$ a vector bundle over $\Cc$. For each point $b \in B$, let $\Cc_b = \pi^{-1}(b)$ denote the curve over $b$ and $E_b = E|_{\Cc_b}$ be the associated vector bundle. Assume that both $B$ and $\Cc$ are irreducible, and in addition, $B$ is reduced. If there exists a point $b_0 \in B$ such that
  \begin{enumeratea}
  \item
    the curve $\Cc_{b_0}$ is reduced,
  \item
    the value $\H^0(E_{b_0})$ is minimal among $\H^0(E_b)$ for $b \in B$, and
  \item
    vector bundle $E_{b_0}$ satisfies $\lambda$-interpolation,
  \end{enumeratea}
  then $E_b$ satisfies $\lambda$-interpolation for a general point $b \in B$.
\end{theorem}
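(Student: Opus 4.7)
My plan is to form the relative Grassmannian bundle $\pi_G \cn G \to B$ parametrizing modification data, construct a universal modification $\tilde{E}$ on $\Cc \times_B G$ that restricts on each point $g \in G_b$ to the modification $M(E_b, V_i)$, and then chain together three semicontinuity statements---one for $\h^1(E_b)$, one for $\h^0(E_b)$, and one for $\h^0(\tilde{E}_g)$---using the minimality hypothesis to pin every quantity at its minimum value in a neighborhood of $b_0$.

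The cohomological bookkeeping runs as follows. Since $E_{b_0}$ is nonspecial, upper semicontinuity of $\h^1$ produces an open $B_1 \subset B$ containing $b_0$ on which $\h^1(E_b) = 0$. Minimality of $\h^0(E_{b_0})$, combined with upper semicontinuity of $\h^0$, produces an open $B_2 \subset B$ containing $b_0$ on which $\h^0(E_b) = \h^0(E_{b_0})$. Set $h_0 = \h^0(E_{b_0}) - \sum_i \lambda_i$. For any $g \in G$ over any $b \in B$, the defining modification sequence together with minimality yields
\[
\h^0(\tilde{E}_g) \geq \h^0(E_b) - \sum_i \lambda_i \geq \h^0(E_{b_0}) - \sum_i \lambda_i = h_0.
\]
The interpolation hypothesis on $E_{b_0}$ provides $g_0 \in G_{b_0}$ with $\h^0(\tilde{E}_{g_0}) = h_0$, so $h_0$ is the global minimum of $\h^0(\tilde{E}_\cdot)$ on $G$, and upper semicontinuity makes $U = \{ g : \h^0(\tilde{E}_g) = h_0 \}$ a non-empty open subset of $G$. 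Since $\Cc_{b_0}$ is reduced, the morphism $\pi_G$ is smooth (and therefore open) near $g_0$; intersecting $\pi_G(U)$ with $B_1 \cap B_2$ yields a non-empty open subset of $B$ on which every fiber $E_b$ is nonspecial and admits modification data realizing $\lambda$-interpolation.

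The main technical point will be constructing $\tilde{E}$ and verifying its flatness over $G$. I would build the universal quotient $Q$ on $\Cc \times_B G$ as the cokernel of the natural map from $\pi^\ast E$ to $\bigoplus_i (\pi^\ast E)|_{\sigma_i(G)}/V_i^{\mathrm{univ}}$, using the $m$ tautological sections $\sigma_i \cn G \to \Cc \times_B G$ and the universal subbundles on the Grassmannian factors. Because $Q|_{\sigma_i(G)}$ is locally free of rank $\lambda_i$ on each section $\sigma_i(G) \cong G$, the sheaf $Q$ is flat over $G$; flatness of $\tilde{E} = \ker(\pi^\ast E \to Q)$ over $G$ then follows from the snake lemma. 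Once this is in place, upper semicontinuity of $\h^0$ applies to $\tilde{E}$ over $G$, and the rest of the argument is the formal semicontinuity chase described above.
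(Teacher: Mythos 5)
Your proposal is correct and follows essentially the same strategy as the paper's proof: build the universal modification over a relative Grassmannian bundle of modification data, establish its flatness over that bundle, and run a semicontinuity argument anchored at the point over $b_0$ supplied by the minimality and interpolation hypotheses. The only variations are in supporting details --- you deduce flatness of $\tilde{E}$ from flatness of $Q$ by the two-out-of-three property rather than by showing $\tilde{E}$ is locally free (which is why the paper restricts the tautological sections to the smooth locus of $\pi$, where they are Cartier divisors; you should still work over that smooth locus so that the modification points are smooth points of the fibers, as the definition of interpolation requires and as your smoothness claim for $\pi_G$ near $g_0$ implicitly assumes), and you obtain openness of the image from smoothness of $\pi_G$ near $g_0$ where the paper invokes Chevalley's theorem.
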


\begin{proof}
  We start by disposing of the non-reduced curves in the family. Since $\Cc_{b_0}$ is reduced and $\pi \cn \Cc \rarr B$ is flat, then the locus of reduced curves $B' \subset B$ is a non-empty open. In what follows, we replace the morphism $\pi$ with its restriction $\pi|_{\pi^{-1}(B)} \cn \pi^{-1}(B') \rarr B'$.

  The key ingredient in this argument is upper semi-continuity of cohomology. To present a more readable account, we will first assume $\lambda = (\lambda_1)$ and later explain how the general case is handled.

  The strategy is to construct a space whose points parametrize possible modifications. Let $U \subset \Cc$ denote the smooth locus of $\pi$. In other words, $U$ consists of points $p$ such that $p$ is a smooth point of the fiber $\Cc_{\pi(p)}$ it belongs to. The space $U$ models all points at which a modification can be performed. Given $p \in U$, the datum of a $(\lambda_1)$-modification also requires a subspace $V_1 \subset E_p$ of codimension $\lambda_1$. The configurations $(p, V_1)$ are parametrized by the total space of the Grassmannian bundle $\gamma \cn G = \Gr(\rank E - \lambda_1, E|_U) \rarr U$. Let $\gamma^\ast E \rarr Q$ denote the universal quotient over $G$. When there is no ambiguity, we will use $E$ to refer to the restriction $E|_U$.

  Since we are not assuming the total space of the family $\Cc$ is irreducible, it is possible that the smooth locus $U$ of $\pi \cn \Cc \rarr B$ is reducible and disconnected (see \cref{T:smooth-components}). By hypothesis, there exists a modification datum over $b_0$ such that $E_{b_0}$ satisfies $\lambda$-interpolation. Let $U'$ denote the (connected) component of $U$ containing the base of this datum and set $G' = \gamma^{-1}(G')$. Since $\gamma|_{G'} \cn G' \rarr U'$ is a Grassmannian bundle and $U'$ is connected and irreducible, then so is $G'$. For the sake of simplicity, we replace $\gamma \cn U \rarr G$ with $\gamma|_{G'} \cn G' \rarr U'$ for the rest of the argument. This will allow us to refer to general points without any ambiguity.
  
  We will use the following diagram of morphisms in the rest of the argument: $p_0$ and $p_1$ denote projections on the first and second factors respectively, $q$ is $\id_C$ in the first factor and the Grassmannian projection $\gamma \cn G \rarr U$ in the second, $q_i = p_i \circ q$ for $i = 0,1$, and $r$ is projection onto the second factor.
  \[\xymatrix{
    & \Cc \x_B G \ar[d]^-{q} \ar@/_1.5pc/[ddl]_-{q_0} \ar@/^1.5pc/[ddr]^-{q_1} \ar[rr]^-{r} && G \ar@/^/[ddl]^-{\gamma} \\
    & \Cc \x_B U \ar[dl]^-{p_0} \ar[dr]_-{p_1} \\
    \Cc && U
  }\]
  
  The diagonal $\Delta \subset U \x_B U$ sits naturally in $\Cc \x_B U$ where it is a Cartier divisor. Note that the diagonal of $\Cc \x_B \Cc$ is not Cartier, and this is the precise reason we need to restrict $\Cc$ to $U$ on the right. There is a natural isomorphism $(p_0^\ast E)|_\Delta \cong (p_1^\ast E)|_\Delta$. The composition
  \[\xymatrix{
    q_0^\ast E =
    q^\ast p_0^\ast E \ar[r] &
    q^\ast (p_0^\ast E)|_{\Delta} \cong
    q^\ast (p_1^\ast E)|_{\Delta} =
    (q_1^\ast E)|_{q^{-1}(\Delta)} =
    (r^\ast \gamma^\ast E)|_{q^{-1}(\Delta)} \ar[r] &
    (r^\ast Q)|_{q^{-1}(\Delta)}
  }\]
  is surjective and we will use $E'$ to denote its kernel. We generalized the construction of modifications to a family, so the expectation is that for every $g \in G$ the restriction $E'|_{r^{-1}(g)}$ is isomorphic to the modification $M(E_{\pi \circ \gamma(g)}, g)$ over $\Cc_{\pi \circ \gamma(g)}$. We ensured this the case since $\gamma(g) \in U \subset \Cc$.

  Since $\Delta \subset \Cc \x_B U$ is Cartier, it follows that $E'$ is locally free, and in particular, $E'$ is flat over $G$. Cohomology and base change implies that the function $\phi \cn G \rarr \Zbb_{\geq 0}$ sending $g$ to
  \[
  \h^0(r_\ast E'|_g) =
  \h^0(E'|_{r^{-1}(g)}) =
  \h^0(M(E_{\pi \circ \gamma(g)}, g))
  \]
  is upper semi-continuous. In a similar note, sending $b \in B$ to $\h^0(E_b)$ is also upper semi-continuous. Since $B$ is irreducible, $\h^0(E_b)$ attains a minimum value $c$ over a dense open $V \subset B$. The short exact sequence defining a modification implies that $\phi$ is bounded from below by $c - \lambda_1$. We already know there is a point $g \in (\pi \circ \gamma)^{-1}(b_0)$ which satisfies $\phi(g) = c - \lambda_1$, so this minimum is attained over a dense open $G'$ in $G$. The original claim amounts to saying the image $\pi \circ \gamma(G')$ in $B$ is open. This follows from Chevalley's Theorem on constructible sets (see \cite[p.~94]{Hartshorne})
  
  The argument above demonstrates the desired claim when $\lambda = (\lambda_1)$. In the general case $\lambda = (\lambda_1, \dots, \lambda_m)$ we replace the second factor $U$ with the complement $V$ of the big diagonal in the $m$-fold product $U \x_B \cdots \x_B U$. Likewise, the moduli space of all modifications will be a product of $m$ Grassmannian bundles
  \[
  G = \prod_{i = 1}^m \Gr(\rank E - \lambda_i)
  \]
  and $V \subset U \x_B \cdots \x_B U$ will parametrize points $(p_1, \dots, p_m)$ such that all $p_i$ are distinct and they are smooth points of the corresponding fiber. Other than these changes, the argument goes through without any major modifications.
\end{proof}

\begin{corollary}
  \label{T:interpolation-open-Hilbert}
  Let $\Ic$ be an irreducible component of some reduced Hilbert scheme $\Ic_{d,g,n}$. Consider a curve $C$ in $\Ic$ such that $\H^1(N_C) = 0$. If $C$ satisfies $\lambda$-interpolation, then the general curve in $\Ic$ also satisfies $\lambda$-interpolation.
\end{corollary}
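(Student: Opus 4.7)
The plan is to apply \cref{T:interpolation-open} directly to the universal family over $\Ic$. Specifically, I would take $\pi \cn \Cc \rarr \Ic$ to be the universal curve and let $E$ be the relative normal bundle of $\Cc$ inside $\Pbb^n \x \Ic$, whose restriction to any fiber $\Cc_b$ is the usual normal bundle $N_{\Cc_b}$. The distinguished point $b_0 \in \Ic$ will be $[C]$.

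First I would check the three geometric hypotheses of \cref{T:interpolation-open}. Reducedness and irreducibility of the base hold by assumption, and by the paper's conventions the fiber $C$ is reduced. Irreducibility of the total space $\Cc$ is slightly more delicate, but follows from the definition of $\Ic$ as the closure of the locus of smooth irreducible curves: the restriction of $\pi$ to a dense open $\Ic^\circ \subset \Ic$ has smooth and irreducible geometric fibers, so $\pi^{-1}(\Ic^\circ)$ is irreducible, and hence so is its closure $\Cc$.

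For the minimality condition on $\h^0$, I would invoke Riemann--Roch and flatness: $\chi(N_b)$ is locally constant, and hence constant, across the connected base $\Ic$, so for any $b \in \Ic$ we have
\[
\h^0(N_b) = \chi(N_b) + \h^1(N_b) = \chi(N_C) + \h^1(N_b) \geq \chi(N_C) = \h^0(N_C),
\]
where the last equality uses the hypothesis $\h^1(N_C) = 0$. Thus $\h^0(N_C)$ is indeed minimal among the $\h^0(N_b)$ for $b \in \Ic$. The third hypothesis, that $N_C$ satisfies $\lambda$-interpolation, is given.

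Applying \cref{T:interpolation-open} then yields $\lambda$-interpolation for $N_b$ for a general $b \in \Ic$, which is precisely what the corollary asserts. I do not expect a real obstacle here: the substance of the corollary is simply that $\h^1(N_C) = 0$ forces $[C]$ to attain the minimal value of $\h^0(N)$ in the family, allowing the openness theorem to apply. The only mild care needed is to verify the irreducibility of the universal curve, which is essentially built into the definition of the restricted Hilbert scheme.
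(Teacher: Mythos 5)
Your proposal is correct and follows essentially the same route as the paper: apply \cref{T:interpolation-open} to the universal curve over $\Ic$ with the relative normal bundle, using the density of the smooth irreducible locus for irreducibility of the total space and the vanishing $\h^1(N_C)=0$ for minimality of $\h^0$. Your Riemann--Roch justification of the minimality step (constancy of $\chi$ in the flat family, so $\h^0(N_b) = \chi + \h^1(N_b) \geq \chi = \h^0(N_C)$) makes explicit what the paper only asserts.
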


\begin{proof}
  Let $\pi \cn \Cc \rarr \Ic$ be the universal curve over $\Ic$. Since $\Ic$ is irreducible and the general member fiber of $\pi$ is smooth, it follows that $\Cc$ is irreducible. Since $[C] \in \Hc_{d,g,n}$ is a smooth point parametrizing a smoothable curve, we also know that $B$ is reduced and $\Cc$ is irreducible. In addition, $\H^1(N_C) = 0$ so the value $\H^0(N_C)$ is minimal. The desired claim follows directly by applying \cref{T:interpolation-open}.
\end{proof}


\section{Elliptic curves}
\label{S:elliptic-curves}

We investigate elliptic curves as an application of degeneration arguments to interpolation. Instead of stating the most general result first, our philosophy is to start with a simpler one, identify the points which can be improved, and build towards stronger statements gradually.

\begin{corollary}
  \label{T:elliptic-normal-curves-interpolation}
  A general elliptic normal curve of degree $d \geq 7$ satisfies weak interpolation.
\end{corollary}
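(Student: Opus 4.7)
The plan is to degenerate the elliptic normal curve to the nodal union $X = C_0 \cup L$, where $C_0 \subset \Pbb^{d-1}$ is a rational normal curve of degree $d-1$ and $L$ is a general secant line; this $X$ has degree $d$, arithmetic genus $1$, and spans $\Pbb^{d-1}$, so it lies in $\Ic_{d,1,d-1}$.

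First, I would record the relevant numerics. The normal bundle of $C_0$ is balanced, $N_{C_0} \cong \Oc_{\Pbb^1}(d+1)^{\oplus(d-2)}$, giving $\h^0(N_{C_0}) = (d-2)(d+2)$ and $\H^1(N_{C_0}) = 0$. By \cref{T:interpolation-P1}, $C_0$ satisfies $((d-2)^{d+2})$-interpolation, the strongest tableau of that shape. On the target side, for a smooth elliptic normal curve $C$ one computes $\h^0(N_C) = d^2 = (d-2)(d+2) + 4$ with remainder $4$; since $d \geq 7$ gives $\rank N_C = d-2 \geq 5 > 4$, weak interpolation for $C$ is precisely $((d-2)^{d+2})$-interpolation.

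Second, I would attach the secant line. For $d \geq 7$ the curve $C_0$ is non-planar, so \cref{T:skew-tangents} lets us choose $L$ so that the two tangent lines of $C_0$ at the points of $C_0 \cap L$ are skew. Then \cref{T:curve-secant-interpolation} upgrades $((d-2)^{d+2})$-interpolation of $C_0$ to $((d-2)^{d+2}, 2)$-interpolation of $X$, and \cref{T:reducing-lambda} (drop the trailing $2$ to $0$) yields $((d-2)^{d+2})$-interpolation of $X$, which is exactly weak interpolation on the elliptic side.

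Third, I would propagate this from $X$ to a general smooth elliptic normal curve. \cref{T:curve-secant-smoothable} (applied with $\H^1(N_{C_0}) = 0$) shows that $X$ is strongly smoothable and $\H^1(N_X) = 0$. Since $d \geq 2g-1$ and $d = n+g$, \cref{T:Hilbert-scheme-irreducibility} guarantees that $\Ic_{d,1,d-1}$ is irreducible, and \cref{T:interpolation-open-Hilbert} then transfers the interpolation property from the special fiber $X$ to the generic smooth point.

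The only subtle point is the matching between the tableau produced by \cref{T:curve-secant-interpolation} and the weak-interpolation tableau on an elliptic normal curve; the hypothesis $d \geq 7$ is exactly what forces the remainder $4$ to sit strictly below the rank $d-2$, so that weak interpolation has shape $((d-2)^{d+2})$ and the extra $2$ coming from the secant is genuinely discardable. I do not expect any serious obstacle beyond bookkeeping.
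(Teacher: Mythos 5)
Your proposal is correct and follows essentially the same route as the paper: degenerate to a rational normal curve plus a secant line, use the balanced normal bundle $\Oc_{\Pbb^1}(d+1)^{\oplus(d-2)}$ and \cref{T:curve-secant-interpolation} to get $((d-2)^{d+2})$-interpolation for the union, observe that $d \geq 7$ makes this tableau equal to weak interpolation for the elliptic normal curve, and conclude via \cref{T:curve-secant-smoothable} and \cref{T:interpolation-open-Hilbert}. The only cosmetic difference is that you invoke the skew-tangent strengthening and then discard the trailing $2$, where the first part of \cref{T:curve-secant-interpolation} already suffices.
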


\begin{proof}
  The idea behind this proof is to study a degeneration consisting of a rational normal curve $C \subset \Pbb^{d-1}$ and a secant line $L$. The normal bundle of $C$ is perfectly balanced $N_C \cong \Oc_{\Pbb^1}(d+1)^{\oplus (d-2)}$ (see \cref{T:rnc-normal-bundle}), so $\H^1(N_C) = 0$ and it satisfies $((d-2)^{d+2})$-interpolation. \cref{T:curve-secant-interpolation} then implies that $X = C \cup L$ satisfies weak interpolation. In fact, $X$ satisfies the stronger $((d-2)^{d+2}, 2, 2)$-interpolation.

  The rank of the normal bundle $N_X$ is $d-2$ and $\h^0(N_X) = d^2$. Weak interpolation is equivalent to $((d-2)^{d+2})$-interpolation as long as
  \[
  \h^0(N_X) - (d-2)(d+2) < \rank N_X.
  \]
  After simplification the inequality becomes $d > 6$, hence the hypothesis on $d$.

  By \cref{T:curve-secant-smoothable} the curve $X$ is smoothable and $\H^1(N_X) = 0$ (hence $[X]$ is a smooth point of the corresponding Hilbert scheme). We covered the hypotheses of \cref{T:interpolation-open-Hilbert}, so the interpolation statement holds for a general elliptic normal curve.
\end{proof}

\begin{remark}
  \label{T:rnc-normal-bundle}
  \cref{T:skew-tangents} provides a strengthening of \cref{T:curve-secant-interpolation} which implies that a general elliptic normal curve satisfies weak interpolation with tail $(2)$. In fact, the high symmetry of rational normal curves implies that all pairs of points are alike, so we do not need to choose a general secant line.

  By studying the normal bundle of rational normal curves, we can extend the tail in question to $(2,2)$. Let $C \subset \Pbb^{d-1}$ be a rational normal curve. The splitting of its normal bundle $N_C = N_{C/\Pbb^{d-1}} \cong \Oc_{\Pbb^1}(d+1)^{\oplus(d-2)}$ can be realized as follows. Pick distinct points $q_1, \dots, q_{d-2}$ on $C$. For each $q_j$ we consider the cone $S_j$ of $C$ over $q_j$. It is clear that $S_j$ is smooth away from $q_j$, so the normal bundle $N_j = N_{C/S_j}$ is a line bundle over $C \setminus q_j$ sitting in $N_C$. The curve-to-projective extension theorem \cite[I.6.8]{Hartshorne} allows us to extend $N_j$ and its inclusion over $q_j$. At a point $q \neq q_j$, the fiber of $N_j$ corresponds to the 2-plane through $q_j$ which contains the tangent line to $q$. When $q = q_j$ the fiber of $N_j$ comes from the osculating plane to $C$ at $q_j$. By studying the bundles $N_j$ in more detail one can show that each has degree $d+1$, and they furnish a splitting $N_C \cong \bigoplus_{j=1}^{d-2} N_j \cong \Oc_{\Pbb^1}(d+1)^{\oplus(d-2)}$.

  Let $L$ be a secant line $L$ to $C$ through two of the points $q_j$, say $q_1$ and $q_2$. We consider a modification $N = N_{C \cup L}|_C(-\sum_i p_i)$ given by $d+2$ general points $p_i$ on $C$. To show that $C \cup L$ satisfies weak interpolation with tail $(2,2)$ it suffices to show that $N \cong \Oc_{\Pbb^1}(-1)^{\oplus d} \oplus \Oc_{\Pbb_1}^{\oplus 2}$ and not $N \cong \Oc_{\Pbb^1}(-1)^{\oplus(d+1)} \oplus \Oc_{\Pbb^1}(1)$. This follows from the fact that $N_1 \oplus N_2$ is a subbundle of $N_C$.
\end{remark}

It is possible to generalize the argument from \cref{T:elliptic-normal-curves-interpolation} to a much broader class of elliptic curves.

\begin{theorem}
  \label{T:elliptic-curves-interpolation}
  A general elliptic curve of degree $d$ in $\Pbb^n$ satisfies weak-interpolation if
  \begin{enumeratea}
  \item
    $d \geq 7$,
  \item
    $d \geq n+1$, and
  \item
    the remainder of $2d$ modulo $n-1$ lies between $3$ and $n-2$ inclusive.
  \end{enumeratea}
\end{theorem}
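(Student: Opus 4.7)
Following the pattern of \cref{T:elliptic-normal-curves-interpolation}, the plan is to degenerate the elliptic curve to a reducible nodal curve and verify weak interpolation on the limit. I would take $X = C \cup L$, where $C \subset \Pbb^n$ is a general rational curve of degree $d-1$ (which exists and spans the ambient projective space, since $d-1 \geq n$) and $L$ is a general secant line meeting $C$ transversely at two smooth points. Then $X$ has degree $d$ and arithmetic genus $1$, and \cref{T:Hilbert-scheme-irreducibility} (invoking $d \geq n+1$) ensures that $\Ic_{d,1,n}$ is irreducible.

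By Sacchiero's theorem $N_C$ is as balanced as possible: writing $\deg N_C = (d-1)(n+1) - 2 = (n-1)a + s$ with $0 \leq s \leq n-2$, we have $N_C \cong \Oc(a+1)^s \oplus \Oc(a)^{n-1-s}$. Since $C$ is non-planar, \cref{T:skew-tangents} implies that the two tangent lines to $C$ at a general secant $L$ are skew, giving $N_X|_L \cong \Oc_L(1)^{n-3} \oplus \Oc_L(2)^2$. By \cref{T:interpolation-P1}, $C$ satisfies the maximal $((n-1)^{a+1}, s)$-interpolation, so \cref{T:curve-secant-interpolation} upgrades this to $((n-1)^{a+1}, s, 2)$-interpolation for $X$.

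Set $q = \lfloor d(n+1)/(n-1) \rfloor$ and $r = 2d \bmod (n-1)$, so that weak interpolation of $X$ is $((n-1)^q)$-interpolation. The identity $r \equiv s + 4 \pmod{n-1}$ shows that the hypothesis $r \in [3, n-2]$ is equivalent to $s \in \{0, \ldots, n-6\}$ (in which case $r = s+4$ and $q = a+1$) or $s = n-2$ (in which case $r = 3$ and $q = a+2$). In the former range, $((n-1)^{a+1})$ is coordinate-wise dominated by $((n-1)^{a+1}, s, 2)$, so \cref{T:reducing-lambda} yields weak interpolation. In the boundary case $s = n-2$ this dominance fails and a direct computation is needed: modify $N_X$ at $a+1$ general points $P_C$ on $C$ and one general point $p_L$ on $L$, each by the full fiber, and let $N_X'$ denote the resulting bundle. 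The Mayer--Vietoris sequence
\[
  0 \to N_X' \to N_X|_C(-P_C) \oplus N_X|_L(-p_L) \to N_X|_{C \cap L} \to 0
\]
combined with the generic splittings $N_X|_C(-P_C) \cong \Oc(1) \oplus \Oc^{n-2}$ (obtained from $N_X|_C \cong \Oc(a+2) \oplus \Oc(a+1)^{n-2}$ by a degree count) and $N_X|_L(-p_L) \cong \Oc^{n-3} \oplus \Oc(1)^2$, both having vanishing $\H^1$, reduces the claim to surjectivity of the node-evaluation map, a map of dimensions $2n+1 \to 2(n-1)$, which holds for generic data by a transversality argument and yields $\h^0(N_X') = r = 3$ and $\H^1(N_X') = 0$.

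Finally, \cref{T:curve-secant-smoothable} shows $X$ is strongly smoothable with $\H^1(N_X) = 0$, so $[X]$ is a smooth point of $\Ic_{d,1,n}$, and \cref{T:interpolation-open-Hilbert} propagates weak interpolation to the general smooth elliptic curve. The main obstacle is the boundary case $s = n-2$, where the clean combination of \cref{T:curve-secant-interpolation} and \cref{T:reducing-lambda} is insufficient and one must execute the direct Mayer--Vietoris computation with a careful genericity argument. This is precisely where the lower bound $r \geq 3$ is essential: the source-to-target dimension gap equals $r$, and only for $r \geq 3$ do the splittings of $N_X|_C(-P_C)$ and $N_X|_L(-p_L)$ leave enough room for the transversality of the node evaluation to go through.
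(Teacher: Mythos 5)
Your overall strategy matches the paper's: degenerate to $X = C \cup L$ with $C$ a general rational curve of degree $d-1$ and $L$ a general secant, split the congruence classes into the range where a floor/dominance computation suffices and the boundary case $2d \equiv 3 \pmod{n-1}$ (your $s = n-2$), and then smooth via \cref{T:curve-secant-smoothable} and \cref{T:interpolation-open-Hilbert}. The easy range and the case division are handled correctly. But the boundary case — which is the whole point of the theorem's lower bound $r \geq 3$ — has a genuine gap. You assert $N_X|_C \cong \Oc(a+2)\oplus\Oc(a+1)^{n-2}$ ``by a degree count.'' A degree count only gives $\deg N_X|_C = (n-1)(a+1)+1$; it does not determine the splitting type. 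The positive modification of $N_C \cong \Oc(a+1)^{n-2}\oplus\Oc(a)$ at the two nodes can just as well produce $\Oc(a+2)^2\oplus\Oc(a+1)^{n-4}\oplus\Oc(a)$ or $\Oc(a+3)\oplus\Oc(a+1)^{n-3}\oplus\Oc(a)$ if the directions of $L$ at both attachment points lie in the Harder--Narasimhan maximal subbundle $E = \Oc(a+1)^{n-2}\subset N_C$ (for a planar curve this happens for \emph{every} secant, so ``generic'' is not free). The missing ingredient is precisely \cref{T:general-secant-away}: a general secant line is away from the corank-one subbundle $E$, which forces the low summand $\Oc(a)$ to be bumped and makes $N_X|_C$ balanced. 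Without this (or an equivalent argument) the boundary case is not proved.

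Secondarily, your choice of modification data — $a+1$ full fibers on $C$ and one on $L$ — creates an avoidable difficulty: after twisting by $-p_L$ the restriction $N_X|_L(-p_L)\cong\Oc_L(1)^2\oplus\Oc_L^{n-3}$ no longer surjects onto $N_X|_{C\cap L}$ on global sections by itself, so your Mayer--Vietoris argument genuinely needs the \emph{joint} surjectivity of the two evaluation maps, which depends on the relative position of the splittings of $N_X|_C(-P_C)$ and $N_X|_L(-p_L)$ at the nodes and is left as an unproven ``transversality argument.'' The paper sidesteps this: once $N_X|_C$ is known to be balanced it satisfies $((n-1)^{a+2})$-interpolation on its own, so one places all $a+2$ modification points on $C$ and reuses the gluing step already established in the proof of \cref{T:curve-secant-interpolation}, where surjectivity of $\H^0(N_X|_L)\rarr\H^0(N_X|_{C\cap L})$ follows simply from every summand of $N_X|_L$ having degree at least $1$. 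I'd recommend restructuring the boundary case around \cref{T:general-secant-away} and keeping all modification points on the rational component.
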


\begin{figure}[h!]
  \centering
  \includegraphics[width=0.75\textwidth]{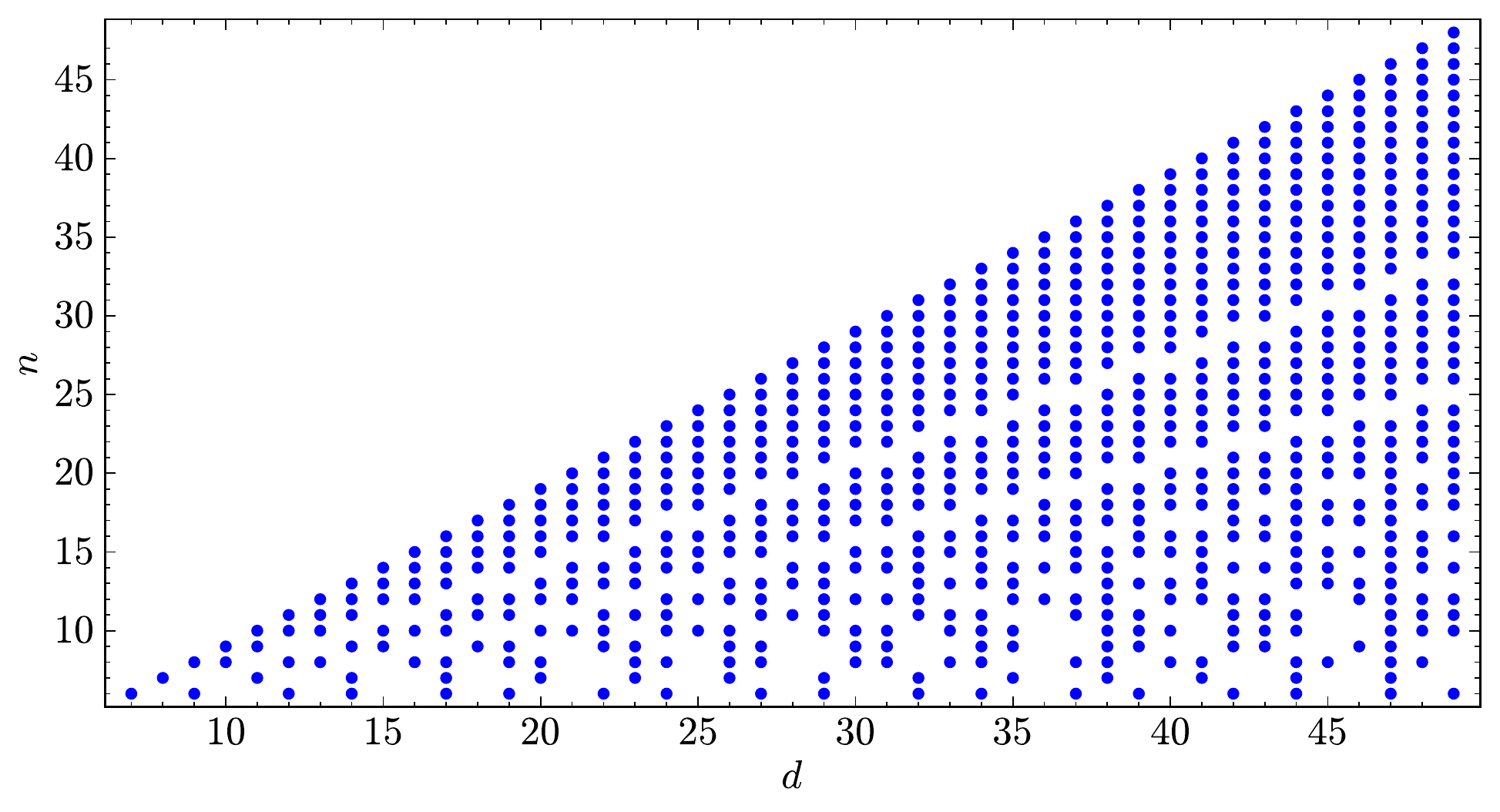}
  \caption{Values of $d$ and $n$ which satisfy the hypotheses of \cref{T:elliptic-curves-interpolation}}
  \label{F:elliptic-interpolation}
\end{figure}

The proof of \cref{T:elliptic-curves-interpolation} rests on the following result.

\begin{theorem}[Ran \cite{Ran}]
  \label{T:rational-curves-interpolation}
  A general spanning rational curve satisfies interpolation.
\end{theorem}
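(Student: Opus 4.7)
The plan is to reduce interpolation for a spanning rational curve $C \subset \Pbb^n$ to a concrete statement about the splitting type of its normal bundle. Writing $N_C \cong \bigoplus_{j=1}^{n-1} \Oc_{\Pbb^1}(a_j)$, \cref{T:interpolation-P1} characterizes $\lambda$-interpolation via dominance against the transposed tableau $(1+a)^T$. A direct check shows that regular $((n-1)^q, r)$-interpolation in the sense of \cref{T:interpolation-properties} is equivalent to the splitting $(a_j)$ being balanced, i.e., $|a_i - a_j| \leq 1$ for all $i, j$. It therefore suffices to show that the normal bundle of a general spanning rational curve has balanced splitting type.

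I would argue by induction on the degree $d$, keeping $n$ fixed. The base case $d = n$ is the rational normal curve, whose normal bundle $\Oc_{\Pbb^1}(n+2)^{\oplus(n-1)}$ (recalled in \cref{T:rnc-normal-bundle}) is perfectly balanced. For the inductive step $d \mapsto d+1$, take a general smooth spanning rational curve $C$ of degree $d$, whose normal bundle is balanced by induction; in particular $H^1(N_C) = 0$. Attach a general line $L$ meeting $C$ transversely at a single general point $q$. The union $X = C \cup L$ is a nodal rational curve of arithmetic genus $0$ and degree $d+1$ which, by \cref{T:curve-secant-lines-smoothable}, is strongly smoothable with $H^1(N_X) = 0$; thus $[X]$ lies on the irreducible Hilbert component parametrizing smooth rational curves of degree $d+1$ in $\Pbb^n$, and \cref{T:interpolation-open-Hilbert} lets us reduce to verifying interpolation at $[X]$. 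By \cref{T:normal-bundle-component}, the restrictions $N_X|_C$ and $N_X|_L$ are the positive elementary modifications of $N_C$ and $N_L \cong \Oc_L(1)^{\oplus(n-1)}$ along the direction of the opposite tangent line at $q$. The standard fact that a generic positive elementary modification of a split bundle on $\Pbb^1$ raises precisely its smallest summand by one keeps both $N_X|_C$ and $N_X|_L$ balanced. Combined with the gluing sequence and the vanishing $H^1(N_X) = 0$, an application of \cref{T:curve-line-interpolation} then converts this balanced behaviour on the components into the desired interpolation property for $X$.

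The main obstacle lies in ensuring that the $\lambda$ produced by iterated line attachments truly has the regular $((n-1)^{q'}, r')$-shape required at the new degree. Attaching a 1-secant line adds $n+1$ to $h^0$ but only extends $\lambda$ by $n-1$, so a naive iteration of \cref{T:curve-line-interpolation} drifts away from the regular tableau and only yields a dominated $\lambda$, which does not by itself imply regular interpolation. One must therefore work directly with the balanced splittings of $N_X|_C$ and $N_X|_L$ and carefully distribute the modification points between the two components; the balancedness established above is exactly what is needed to close the bookkeeping. Once this is verified for $X$, the openness statement \cref{T:interpolation-open} transfers balancedness (and hence regular interpolation) to the general smooth rational curve of degree $d+1$, completing the induction.
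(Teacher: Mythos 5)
The paper does not prove this statement---it is quoted from \cite{Ran}---so there is no internal argument to compare against; your proposal must stand on its own. The overall strategy (reduce to balancedness of the normal bundle via \cref{T:interpolation-P1}, then induct on degree by attaching a general $1$-secant line and smoothing) is the right one and is essentially the standard approach. The base case, the smoothability and $\H^1$-vanishing via \cref{T:curve-secant-lines-smoothable}, the irreducibility of $\Ic_{d,0,n}$ for $d \geq n$, and the identification of $N_X|_C$ and $N_X|_L$ as positive modifications are all fine.

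The gap is at the one step that carries all the content. Each degree increment adds $n+1$ to $\h^0(N_X)$, but the machinery you invoke can only account for $n-1$ of those sections: in \cref{T:two-curves-interpolation} the $(n-1)$ part of $D$'s interpolation is consumed by surjectivity onto the fiber at the node, and since $\h^0(N_L) = 2(n-1)$ there is no room to place the remaining $2$ conditions on $L$; placing them on $C$ instead requires an argument about $N_X|_C$ together with the gluing at $q$ that you never supply. You correctly diagnose this ("a naive iteration \dots only yields a dominated $\lambda$"), but the proposed remedy---"work directly with the balanced splittings \dots and carefully distribute the modification points"---is precisely the missing proof, not a sketch of one. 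Note that balancedness of $N_X|_C$ and $N_X|_L$ separately does not imply interpolation for $N_X$: the bundle that specializes from the smooth fibers is $N_X$ on the whole nodal curve, not its restriction to a component, and what must be verified is that after imposing the full complement of point conditions the restriction map to $N_X|_q$ remains surjective on sections (compare the analogous verification in the proof of \cref{T:curve-secant-interpolation}, where extracting even a single extra part $(2)$ requires the skew-tangent analysis of $N_X|_L$). Without closing this, the induction degrades at every step: a dominated tableau does not force the balanced splitting at degree $d+1$, so the inductive hypothesis fails at the next stage.
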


\begin{remark}
  \label{T:rational-curves-interpolation-spanning}
  A general rational $C \subset \Pbb^n$ of degree $d$ is spanning as long as $d \geq n$.
\end{remark}

\begin{remark}
  \label{T:rational-curve-splitting}
  Let us derive the precise splitting type of a general degree $d$ rational curve $C \subset \Pbb^n$. We assume $d \geq n$. The Euler sequence restricted to $C$ reads
  \[\xymatrix{
    0 \ar[r] &
    \Oc_C \ar[r] &
    \Oc_C(1)^{\oplus(n+1)} \ar[r] &
    T_{\Pbb^n}|_C \ar[r] &
    0,
  }\]
  so we have equality of degrees
  \[
  \deg T_{\Pbb^n}|_C =
  \deg \Oc_C^{\oplus(n+1)} =
  (n+1) d.
  \]
  The normal bundle of $C$ sits in the sequence
  \[\xymatrix{
    0 \ar[r] &
    T_C \ar[r] &
    T_{\Pbb^n}|_C \ar[r] &
    N_C \ar[r] &
    0,
  }\]
  and we deduce
  \[
  \deg N_C =
  \deg T_{\Pbb^n}|_C - \deg T_C =
  (n+1)d - 2.
  \]
  If we write $2d-2 = q \cdot (n-1) + r$ and $0 \leq r < n-1$, then \cref{T:rational-curves-interpolation} implies
  \[
  N_C \cong
  \Oc_{\Pbb^1}(d + q + 1)^{\oplus r} \oplus \Oc_{\Pbb^1}(d + q)^{\oplus(n-1-r)}.
  \]
\end{remark}

\begin{proof}[Proof of \cref{T:elliptic-curves-interpolation}]
  We consider the union of a general degree $d-1$ rational curve $C$ and a secant line $L$. Following \cref{T:rational-curve-splitting}, we deduce
  \[
  N_C = \Oc_{\Pbb^1}(d + q)^{\oplus r} \oplus \Oc_{\Pbb^1}(d + q - 1)^{\oplus(n-1-r)}
  \]
  where $2d-4 = q \cdot (n-1) + r$ for $0 \leq r < n-1$. \cref{T:curve-secant-interpolation} implies $C \cup L$ satisfies $((n-1)^{d+q})$-interpolation. This is the definition of weak interpolation for $C \cup L$ as long as
  \[
  \left\lfloor \frac{\h^0(N_C)}{\rank N_C} \right\rfloor =
  \left\lfloor \frac{\h^0(N_{C \cup L})}{\rank N_{C \cup L}} \right\rfloor.
  \]
  We know that $\h^0(N_{C \cup L}) = h^0(N_C) + 4$, so we can rewrite the equality as
  \[
  d + q = d + q + \left\lfloor \frac{r + 4}{n-1} \right\rfloor.
  \]
  Since $2d-4 = q \cdot (n-1) + r$, the condition $r + 4 < n-1$ is equivalent to the cases
  \[
  2d \equiv 4, \dots, n-2 \pmod{n-1}.
  \]

  It remains to show our claim holds when $2d \equiv 3 \pmod{n-1}$. Consider the subbundle
  \[
  E = \Oc_{\Pbb^1}(d+q)^{\oplus r} \subset N_C
  \]
  consisting of all pieces of the larger degree $d+q$. Alternatively, we can construct $E$ as the largest proper subbundle in the Harder-Narasimhan filtration of $N_C$. Our choice of $r$ ensures $E$ is proper of corank $1$. \cref{T:general-secant-away} shows that the general secant is away from $E$. The decomposition of $N_{C \cup L}|_C$ is obtained from the decomposition of $N_C$ by adding $1$ to the degrees of two of the components (possibly the same one). By choosing $L$ to be away from $E$, we are ensuring that the single component of lower degree $d-q-1$ is bumped up, so $C \cup L$ satisfies interpolation.

  The statement about weak interpolation of a general elliptic curve follows by applying \cref{T:interpolation-open-Hilbert}.
\end{proof}

\begin{remark}
  The congruence condition on $2d$ mod $n-1$ excludes all cases when $n = 3$ and $4$. We will later show that the general elliptic curve in $\Pbb^3$ satisfies interpolation as long as $d \geq 4$ (see \cref{T:curves-interpolation-P3}).
\end{remark}


\section{Curves in $\Pbb^3$}
\label{S:curves-in-P3}

The study of space curves in algebraic geometry has a long history and there are many strong results we can rest on. This makes for an exciting application of interpolation ideas. We aim to show the following result.

\begin{theorem}
  \label{T:curves-interpolation-P3}
  A general connected genus $g$ and degree $d$ curve in $\Pbb^3$ satisfies interpolation if $d \geq g + 3$ unless $d = 5$ and $g = 2$.
\end{theorem}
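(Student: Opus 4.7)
The strategy is to exhibit a smoothable nodal curve $X$ of the given degree $d$ and genus $g$, verify interpolation on $X$, and then propagate it to a general smooth member via openness. Set $d_0 := d - g \geq 3$ and take $C_0 \subset \Pbb^3$ to be a general rational curve of degree $d_0$; by \cref{T:rational-curves-interpolation} together with \cref{T:rational-curve-splitting} we have $N_{C_0} \cong \Oc_{\Pbb^1}(2d_0 - 1)^{\oplus 2}$, so $H^1(N_{C_0}) = 0$ and $C_0$ already satisfies weak interpolation $(2^{2d_0})$. Attach $g$ general secant lines $L_1, \dots, L_g$, arranged pairwise disjoint via \cref{T:skew-secants} and with skew tangent lines at each pair of attachment points via \cref{T:skew-tangents}. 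The nodal union $X = C_0 \cup L_1 \cup \cdots \cup L_g$ has degree $d$ and arithmetic genus $g$; by \cref{T:curve-secant-lines-smoothable} it is strongly smoothable with $H^1(N_X) = 0$, and by \cref{T:Hilbert-scheme-irreducibility-P3} it lies in the unique component of $\Ic_{d,g,3}$.

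Because $N_X$ has rank $2$ and $h^0(N_X) = 4d$, \cref{T:strong-interpolation-2} reduces the theorem to proving weak interpolation, namely $(2^{2d})$-interpolation. Applying \cref{T:curve-secant-interpolation} inductively---one secant at a time, with the skew-tangent clause appending a $(2)$ to the tail at each step---one obtains $(2^{2d_0 + g}) = (2^{2d - g})$-interpolation for $X$. The modified normal bundle at that stage retains exactly $2g$ global sections, so the main obstacle is to extract a further $g$ copies of $(2)$ in order to eliminate these residual sections.

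My plan for this final step is to iterate \cref{T:global-sections-subbundle}: provided the $2g$ residual sections of the modified $N_X$ span a rank-$2$ subbundle, we obtain one more $(2)$-interpolation, reducing the residual to $2g - 2$ sections, and so on for $g$ iterations. The rank-$2$ span is a transversality statement that I would analyze on $C_0$ using \cref{T:normal-bundle-component}: there $N_X|_{C_0}$ appears as an extension of the length-$2g$ sheaf $T^1$ supported at the nodes by $N_{C_0}$, and for generic secant attachments this extension should be the balanced bundle $\Oc_{\Pbb^1}(2d - g - 1)^{\oplus 2}$. The needed transversality is then controlled by the Hirzebruch surface swept out by secants to $C_0$, whose birational geometry governs how the residual deformations interact with the $2g$ nodes. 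The case $(d,g) = (5,2)$ should be the genuine exception: for a twisted cubic joined by two general secants the four nodes conspire with the rigidity of $\Oc_{\Pbb^1}(5)^{\oplus 2}$ to confine the residual deformations to a rank-$1$ subbundle, matching the classical fact that a general genus-$2$ degree-$5$ space curve must lie on a quadric. In all remaining cases, \cref{T:interpolation-open-Hilbert} propagates interpolation from $X$ to the general smooth curve in $\Ic_{d,g,3}$, completing the proof.
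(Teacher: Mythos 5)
Your scaffolding coincides with the paper's: degenerate to a general rational curve of degree $d-g$ with $g$ general secant lines, reduce to weak interpolation via \cref{T:strong-interpolation-2}, verify smoothability and $\H^1(N_X)=0$ via \cref{T:curve-secant-lines-smoothable}, and propagate by \cref{T:interpolation-open-Hilbert}. The numerology is also right: $(2^{2d-g})$-interpolation from iterating \cref{T:curve-secant-interpolation} leaves $2g$ residual sections, and one must extract $g$ further full-fiber modifications (the paper does this by twisting at one point on each $L_i$, using $N_X|_{L_i}\cong\Oc_{L_i}(2)^{\oplus 2}$ and the balancedness of $N_X|_C$ to run a Mayer--Vietoris sequence).

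The genuine gap is the sentence ``for generic secant attachments this extension should be the balanced bundle $\Oc_{\Pbb^1}(2d-g-1)^{\oplus 2}$.'' This is precisely \cref{T:rational-curve-P3-secants}, and it is the technical heart of the entire section, not a routine transversality check. The difficulty is that attaching a secant $L$ corresponding to $(p,q)\in C\times C$ modifies $\Pbb N_C$ at the \emph{pair} of points $f(p,q)$ and $f(q,p)$, where $f\cn C\times C\drarr\Pbb N_C$ records the direction of the secant; these two points are coupled by the secant correspondence, so genericity of the secant does not automatically make the pair of modification points general on the Hirzebruch surface. One must show the two points land on distinct members of the relevant pencil of $(k,1)$-curves at each inductive step, which the paper does by computing $\wtilde{f}^\ast\Oc_{\Pbb N_C}(k,1)$ and $(\wtilde{f}\circ\wtilde{\sigma})^\ast\Oc_{\Pbb N_C}(k,1)$ on the blow-up of $C\times C$ at the $2(d-2)(d-3)$ tangential trisecants and invoking \cref{T:different-leaves} together with \cref{T:modifying-F0-section}. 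The exceptional case $(d,g)=(5,2)$ is not a vague ``conspiracy'' of the nodes: it occurs because for the twisted cubic $f$ is an isomorphism, the exceptional divisors $E,E'$ are empty, and the two pullbacks coincide exactly at the step $k=1$, forcing the four modification points onto a $(1,1)$-curve and yielding $N_X|_C\cong\Oc_{\Pbb^1}(6)\oplus\Oc_{\Pbb^1}(8)$. Without this analysis your argument asserts its key input rather than proving it, and it also cannot certify that $(5,2)$ is the \emph{only} failure. (A smaller caveat: iterating \cref{T:global-sections-subbundle} on the reducible curve $X$ requires care, since the residual sections vanish identically on $C$ and the ``subbundle spanned by global sections'' has different ranks on different components; the paper sidesteps this by placing the final $g$ twist points directly on the lines $L_i$.)
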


Our proof of \cref{T:curves-interpolation-P3} depends on the following analysis of secant lines to rational curves in $\Pbb^3$.

\begin{proposition}
  \label{T:rational-curve-P3-secants}
  Let $C \subset \Pbb^3$ be a general rational curve of degree $d \geq 3$. Consider $k$ general secant lines $L_1, \dots, L_k$ and the union curve $X = C \cup \bigcup_i L_i$. Unless $d = 3$ and $k = 2$, the vector bundle $N_X|_C \cong \Oc_{\Pbb^1}(2d - 1 + k)^{\oplus 2}$ is balanced. In the exceptional case $d = 3$, $k = 2$, the bundle is $N_X|_C \cong \Oc_{\Pbb^1}(6) \oplus \Oc_{\Pbb^1}(8)$.
\end{proposition}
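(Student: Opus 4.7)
By \cref{T:rational-curve-splitting}, $N_C \cong \mathcal{O}_{\mathbb{P}^1}(2d-1)^{\oplus 2}$ for a general rational curve $C \subset \mathbb{P}^3$ of degree $d$. Iterating \cref{T:normal-bundle-component} across the $k$ secant lines yields
\[
0 \to N_C \to N_X|_C \to T^1_S \to 0,
\]
where $S = C \cap (L_1 \cup \cdots \cup L_k)$ has length $2k$, so $N_X|_C$ is a rank-$2$ bundle on $\mathbb{P}^1$ of degree $4d - 2 + 2k$. The statement amounts to determining its splitting type, and I would handle the exceptional and non-exceptional cases separately.

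For the exceptional case $d=3$, $k=2$, I would argue through a quadric. Quadrics through a twisted cubic form a $\mathbb{P}^2$, and the condition of containing each general secant cuts this by one linear condition, so $X = C \cup L_1 \cup L_2$ lies on a unique smooth quadric $Q \subset \mathbb{P}^3$. On $Q \cong \mathbb{P}^1 \times \mathbb{P}^1$ the twisted cubic has bidegree $(1,2)$, giving $N_{C/Q} \cong \mathcal{O}(4)$ by adjunction; \cref{T:normal-bundle-component} applied inside $Q$ then yields $N_{X/Q}|_C \cong \mathcal{O}(8)$. Since $N_{Q/\mathbb{P}^3}|_C \cong \mathcal{O}_{\mathbb{P}^3}(2)|_C \cong \mathcal{O}(6)$, the normal bundle sequence of $X \subset Q \subset \mathbb{P}^3$ restricted to $C$ reads
\[
0 \to \mathcal{O}(8) \to N_X|_C \to \mathcal{O}(6) \to 0.
\]
This sequence splits because $\mathrm{Ext}^1(\mathcal{O}(6), \mathcal{O}(8)) = H^1(\mathbb{P}^1, \mathcal{O}(2)) = 0$, yielding $N_X|_C \cong \mathcal{O}(6) \oplus \mathcal{O}(8)$.

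For the non-exceptional cases I would induct on $k$, with base case $k=0$ supplied by \cref{T:rational-curve-splitting}. Writing $X_j = C \cup L_1 \cup \cdots \cup L_j$, the inclusion $N_{X_{j-1}}|_C \hookrightarrow N_{X_j}|_C$ has cokernel of length $2$ supported at $L_j \cap C$ along the secant directions of $L_j$. Starting from a balanced $N_{X_{j-1}}|_C \cong \mathcal{O}(a)^{\oplus 2}$, this kind of double enlargement yields the balanced $\mathcal{O}(a+1)^{\oplus 2}$ unless the two points and secant directions simultaneously lie on a single maximal line subbundle of $N_{X_{j-1}}|_C$; since the secant data vary in the two-parameter family $C^{(2)}$, a dimension count shows that a general secant avoids this codimension-$2$ locus.

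The principal obstacle is the inductive step from $(d,k) = (3,2)$ to $(3,3)$, where the starting bundle is itself unbalanced, with the distinguished sub-line-bundle $\mathcal{O}(8) = N_{X_2/Q}|_C$ pinned down by the quadric $Q$. Here I would verify that the secant directions of a general $L_3$ do not lie in this $\mathcal{O}(8)$-subbundle: its fibers along $C$ are the tangent directions to $Q$, and $L_3$ would be tangent to $Q$ at both of its secant points only when $L_3$ itself lies on $Q$, but secants of $C$ contained in $Q$ form only the ruling of bidegree $(1,0)$, a one-parameter subfamily of the two-parameter family of all secants of $C$. The corresponding enlargement then advances the lower summand twice, producing the balanced $\mathcal{O}(8)^{\oplus 2}$ and restoring the inductive hypothesis for all $k \ge 4$.
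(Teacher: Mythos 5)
Your treatment of the exceptional case via the unique quadric through $C \cup L_1 \cup L_2$ is correct and is a genuinely different, more elementary route than the paper's (which instead observes that the four points $\ell_1, \sigma(\ell_1), \ell_2, \sigma(\ell_2)$ always lie on a $(1,1)$-curve in $C \x C \cong \Pbb N_C$). Your step from $(d,k)=(3,2)$ to $(3,3)$ is also essentially right. The problem is the main inductive step for general $d$ and $k$, which is exactly where the paper concentrates all of its effort.

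You correctly identify the failure condition: the positive modification of a balanced $N_{X_{j-1}}|_C$ stays balanced unless the two new modification data lie on a common maximal line subbundle, i.e., on a common member of a pencil of sections of $\Pbb N_{X_{j-1}}|_C \cong \Fbb_0$ (equivalently, of $(j-1,1)$-curves in $\Pbb N_C$ through the $2(j-1)$ previous points, as in \cref{T:modifying-F0-section}). But this is a codimension-\emph{one} condition on the pair of new points, not codimension two: through a general point of $\Fbb_0$ there passes exactly one member of the pencil, and asking the second point to lie on that member is a single condition. The secant data sweep out a two-dimensional family, namely the image of $C \x C$ under $(f, f\circ\sigma)$ where $f(p,q)$ is the direction of $[p,q]$ at $p$, sitting inside a four-dimensional ambient space; a dimension count cannot exclude that this surface is entirely contained in the bad divisor. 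Indeed it \emph{is} so contained when $d=3$ at the second secant --- that is precisely why the exceptional case exists --- so any correct argument must use finer information about $f$ than dimensions. The paper's mechanism is \cref{T:different-leaves}: it resolves $f$ and $f\circ\sigma$ at the $2(d-2)(d-3)$ tangential trisecants, computes $\deg f = d-2$, deduces $\wtilde{f}^\ast\Oc_{\Pbb N_C}(k,1) = \Oc_S(k+d-3,d-2)(-E)$ and $(\wtilde{f}\circ\wtilde{\sigma})^\ast\Oc_{\Pbb N_C}(k,1) = \Oc_S(d-2,k+d-3)(-E')$, and checks these are non-isomorphic (always for $d\geq 4$; for $d=3$ except when $k=1$), which forces $f(\ell)$ and $f(\sigma(\ell))$ onto distinct members of the pencil for general $\ell$. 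Some argument of this kind, distinguishing the two branches of the secant correspondence relative to the pencil, is indispensable and is missing from your proposal.
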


Let us assume this result and deduce \cref{T:curves-interpolation-P3} from it.

\begin{proof}[Proof of \cref{T:curves-interpolation-P3}]
  The normal bundle of a space curve has rank $2$, so we can focus on demonstrating weak interpolation by \cref{T:strong-interpolation-2}. Ran's \cref{T:rational-curves-interpolation} guarantees that a general rational curve $C$ of degree $d-g \geq 3$ has balanced normal bundle, so it satisfies weak interpolation. Let $L_1, \dots, L_g$ be $g$ secant lines to $C$ and set $X = C \cup L_1 \cup \cdots \cup L_g$. By choosing $L_i$ general, we can assume
  \begin{enumeratea}
  \item
    all $L_i$ are disjoint (by \cref{T:skew-secants}),
  \item
    the tangent lines to $C$ at $C \cap (L_1 \cup \cdots \cup L_g)$ are all skew (by \cref{T:skew-tangents}), and
  \item
    the vector bundle $N_X|_C$ is balanced (by \cref{T:rational-curve-P3-secants}).
  \end{enumeratea}
  We apply \cref{T:curve-secant-lines-smoothable} to conclude that $X$ is smoothable and $\H^1(N_X) = 0$. In other words, $[X]$ is a smooth point of the Hilbert scheme $\Hc_{d,g,3}$. Since interpolation is an open condition (\cref{T:interpolation-open-Hilbert}), it suffices to show $X$ satisfies weak interpolation. We do this by twisting back $N_X$ at $2d - g$ points $q_j$ on $C$ and one point $p_i$ on each $L_i$. We will choose all $p_i$ and $q_j$ to be smooth points of $X$. We have a short exact sequence
  \[\xymatrix{
    0 \ar[r] &
    N_X(-\sum_i p_i - \sum_j q_j) \ar[r] &
    N_X|_C(-\sum_j q_j) \oplus \bigoplus_i N_X|_{L_i}(-p_i) \ar[r] &
    N_X|_{\Sing X} \ar[r] &
    0.
  }\]
  Since the tangent lines of $C$ to all nodal points $\Sing X$ are skew, it follows that $N_X|_{L_i} \cong \Oc_{L_i}(2)^{\oplus 2}$ and $N_X|_{L_i}(-p_i) \cong \Oc_{L_i}(1)^{\oplus 2}$. This ensures that the morphism
  \[\xymatrix{
    \bigoplus_i N_X|_{L_i}(-p_i) \ar[r] &
    N_X|_{\Sing X} = \bigoplus_i N_X|_{L_i \cap C}
  }\]
  is surjective on $\H^0$. But $N_X|_C$ is balanced, so $\H^1(N_X|_C(-\sum_j q_j)) = 0$ and $\H^1(N_X(-\sum_i p_i - \sum_j q_j)) = 0$. This completes the proof of the desired statement.
\end{proof}

The proof of \cref{T:rational-curve-P3-secants} rests on understanding elementary modifications of rank $2$ bundles. This topic is very classical, so our discussion will be succinct. We refer to \cite{Beauville} and \cite{Reid-surfaces} for a detailed discussion of ruled and rational surfaces.

\begin{example}[Elementary modifications in rank $2$]
  \label{T:rank-2-modifications}
  Let $C$ be a smooth curve and $E$ a rank $2$ bundle over it. We consider a modification at a point $p \in C$ given by a line $L \subset E_p$:
  \[\xymatrix{
    0 \ar[r] &
    E' = M(E, L) \ar[r] &
    E \ar[r] &
    E_p/L \ar[r] &
    0.
  }\]
  The line $L$ corresponds to a point $\ell \in \Pbb E$ lying over $p$. If $\pi \cn \Pbb E \rarr C$ is the natural projection morphism, let $F = \pi^{-1}(p)$ denote the fiber containing $\ell$. The blow-up $S = \Bl_\ell \Pbb E$ of $\Pbb E$ at $\ell$ is a smooth surface with exceptional locus $E$. Let $\wtilde{F}$ denote the proper transform of $F$ in $S$. Since $E^2 = -1$ and $(\wtilde{F} + E)^2 = F^2 = 0$, it follows that $\wtilde{F}^2 = -1$. We can contract $\wtilde{F}$, and the resulting surface is a $\Pbb^1$-bundle over $C$, naturally isomorphic to $\Pbb E'$. In general, every ruled surface can be presented as $\Pbb E$ for some $C$ and $E$.

  Note that the image of the contracted curve $\wtilde{F}$ is a distinguished point on $\Pbb E'$. The process is symmetric, that is, if we blow up this point in $\Pbb E'$ and blow down the proper transform of the fiber we arrive back at $\Pbb E$.

  It should not come as a surprise that the operation we described above is called an \emph{(elementary) modification} of a ruled surface. If $S$ is a ruled surface and $p$ a point on it, we will use $M(S, p)$ to denote the resulting modification and $\wtilde{p} \in M(S,p)$ the distinguished point corresponding to $p$. More generally, given a list of points $p_1, \dots, p_k$ lying in distinct fibers, we can consider the simultaneous modification $M(S, p_1, \dots, p_k)$. The construction furnishes a birational equivalence $m_{S,p_1, \dots, p_k} \cn S \drarr M(S, p_1, \dots, p_k)$. When there is no ambiguity, we may shorten our notation $m_{S, p_i}$ to $m_{p_i}$.

  Let us specialize to $C = \Pbb^1$. Since $\Pbb(\Oc(a) \oplus \Oc(b)) \cong \Pbb(\Oc \oplus \Oc(b-a))$, it suffices to study the \emph{Hirzebruch surfaces} $\Fbb_n = \Pbb(\Oc \oplus \Oc(n))$ for $n \geq 0$. In $\Fbb_0 \cong \Pbb^1 \x \Pbb^1$ there is a unique $0$-section through each point on the surface; there are no negative sections. When $n > 0$, the surface $\Fbb_n$ has a distinguished section $B_n$ with self-intersection $B_n^2 = -n$. Our goal is to describe the modification at any point $p \in \Fbb_n$.
  \begin{description}
  \item[Case 1, $n = 0$.]
    All points on $p \in \Fbb_0$ are alike, and any modification leads to an $\Fbb_1$. The $(-1)$-section is the proper transform of the $0$-section through $p \in \Fbb_0$.

  \item[Case 2, $n > 0$.]
    If $p \in B_n$, then the resulting surface is $\Fbb_{n+1}$. Its distinguished section is the proper transform of $B_n$. If $p \notin B_n$, then we arrive at $\Fbb_{n-1}$. The section $B_n$ maps isomorphically to the distinguished section of the new surface.
  \end{description}

  The following diagram summarizes the behavior of Hirzebruch surfaces under elementary modifications. Solid arrows represent general modifications while dotted ones stand for modifications based at a point on the distinguished section. 

  \hspace{\baselineskip}
  \[\xymatrix@C=0.5in{
    \Fbb_0 \ar@{=>}@/_1pc/[r]&
    \Fbb_1 \ar@{.>}@/_1pc/[r] \ar@{=>}@/_1pc/[l] &
    \Fbb_2 \ar@{.>}@/_1pc/[r] \ar@{=>}@/_1pc/[l] &
    \Fbb_3 \ar@{.>}@/_1pc/[r] \ar@{=>}@/_1pc/[l] &
    \cdots \ar@{=>}@/_1pc/[l] 
  }\]
  \hspace{\baselineskip}

  Let us investigate the result of repeated modifications applied to a Hirzebruch surface $\Fbb_n$ where $n > 0$. If the first point $p_1$ is general (away from the $(-n)$-section), then $M(\Fbb_n, p_1) \cong \Fbb_{n-1}$. In choosing the second point $p_2 \in \Fbb_n$, we will require that it is not in the fiber of $p_1$ and away from $B_n$ (the proper transform of $B_{n-1} \subset M(\Fbb_n, p_1)$ in $\Fbb_n$). Under these conditions $M(\Fbb_n, p_1, p_2) \cong \Fbb_{n-2}$. Continuing this argument we can show that if the first $n$ points are taken to be general, then $M(\Fbb_n, p_1, \dots, p_n) \cong \Fbb_0$. Instead of adding more points, we will reset the process and present the case $n = 0$ separately.

  For any $p_1 \in \Fbb_0$, the resulting modification is $M(\Fbb_0, p_1) \cong \Fbb_1$. The $(-1)$-section in $M(\Fbb_0, p_1)$ is the transform of the horizontal section through $p_1$ in $\Fbb_0$. As long as $p_2$ avoids it as well as the fiber of $p_1$, we will have $M(\Fbb_0, p_1, p_2) \cong \Fbb_0$. In choosing the third point we avoid the fibers of the first two to obtain $M(\Fbb_0, p_1, p_2, p_3) \cong \Fbb_1$. The $(-1)$-section in this surface is the transform of the unique $2$-section through the first three points, so avoiding it we are back to $\Fbb_0$. It is clear how the process continues and the final result depends on the parity of the number of points we modify at. We demonstrated the following result.

  \begin{proposition}
    \label{T:modifying-Fn}
    If $p_1, \dots, p_k \in \Fbb_n$ are general points, then
    \[
    M(\Fbb_n, p_1, \dots, p_k) \cong
    \begin{cases}
      \Fbb_{n-k} & \textrm{if $k \leq n$}, \\
      \Fbb_0 & \textrm{if $k > n$ and $k - n \equiv 0 \pmod{2}$}, \\
      \Fbb_1 & \textrm{if $k > n$ and $k - n \equiv 1 \pmod{2}$}.
    \end{cases}
    \]
  \end{proposition}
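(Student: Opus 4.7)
The plan is to prove the proposition by induction on $k$, using the two case analysis (general modification vs.\ modification at a point on the distinguished section) that was already recorded in the preceding discussion. Once we know how a single modification transforms $\Fbb_n$, the only thing to check is that, at each stage, the point $p_{k+1}$ is general enough in $\Fbb_n$ to be ``general'' with respect to the surface we have built so far. The total number of forbidden loci at the $(k{+}1)$-st step is finite and each is a proper closed subvariety of $\Fbb_n$, so genericity of $p_1,\dots,p_k$ in $\Fbb_n$ implies the desired genericity throughout.

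Formally, I would argue as follows. Let $m_i \cn \Fbb_n \drarr M(\Fbb_n, p_1,\dots,p_i)$ denote the birational map obtained by modifying the first $i$ points. The base case $k=1$ is contained in the two bullet cases above Proposition~\ref{T:modifying-Fn}. For the inductive step, assume the conclusion holds after $i$ modifications, and let $n_i$ be the Hirzebruch number of $M(\Fbb_n,p_1,\dots,p_i)$. If $n_i > 0$, the distinguished section $B_{n_i}$ of that surface pulls back under $m_i^{-1}$ to a proper closed subvariety $Z_i \subset \Fbb_n$ (either the proper transform of $B_n$ if we have only performed ``general'' modifications, or the transform of a curve of bounded self-intersection built from $p_1,\dots,p_i$ in the $n_i=0$ phase). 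Taking $p_{i+1}$ outside $Z_i$ and outside the fibers of $p_1,\dots,p_i$, the Case~2 analysis yields $n_{i+1} = n_i - 1$. If $n_i = 0$, then every point is general on $\Fbb_0$ and Case~1 gives $n_{i+1} = 1$, with the new distinguished section being the transform of the horizontal section of $\Fbb_0$ through $m_i(p_{i+1})$, which then defines the $Z_{i+1}$ to be avoided at the next step. Putting these two patterns together gives exactly the three cases in the statement.

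The only real bookkeeping is to check that the avoidance locus $Z_i \subset \Fbb_n$ is always a proper closed subvariety whose formation is compatible with varying $p_1,\dots,p_i$ in a Zariski open of $\Fbb_n^i$; then a general choice of $(p_1,\dots,p_k)$ avoids all the finitely many conditions simultaneously. In the descending phase $k \leq n$ this is immediate because $Z_i$ is just the proper transform of $B_n$, which does not depend on the $p_j$ at all. In the oscillating phase, $Z_i$ is the proper transform of a rational curve on $\Fbb_n$ whose numerical class and passage through finitely many of the $p_j$ are controlled by the history; one checks by hand (or by the explicit Picard group computation on $\Fbb_n$) that the family of such curves does not fill $\Fbb_n$ as $(p_1,\dots,p_i)$ varies, so a generic next point lies off it.

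The main obstacle is simply this last bookkeeping: making sure that the ``general'' in the hypothesis, interpreted as the complement of a fixed proper closed subvariety of $\Fbb_n^k$, suffices to imply general position relative to each intermediate distinguished section. Once that is in place, the numerical conclusion follows mechanically from iterating the two one-step rules, and the parity statement for $k > n$ is just the observation that $\Fbb_0 \rightsquigarrow \Fbb_1 \rightsquigarrow \Fbb_0 \rightsquigarrow \cdots$ under generic modifications.
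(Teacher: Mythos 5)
Your proposal is correct and follows essentially the same route as the paper: the paper's argument is precisely the iteration of the two one-step rules (a general modification of $\Fbb_{n}$ with $n>0$ yields $\Fbb_{n-1}$, and any modification of $\Fbb_0$ yields $\Fbb_1$), tracking the distinguished section of each intermediate surface back to a proper closed curve in the original $\Fbb_n$ that the next point must avoid, exactly as your $Z_i$ does. The bookkeeping you flag is the only content beyond the two cases, and it is handled the same way in the paper (the avoidance locus is the transform of $B_n$ in the descending phase and of a section through finitely many of the earlier points in the oscillating phase), so there is no gap.
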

\end{example}

Our discussion so far explored the effect of modifying at a single general point. What if we are interested in modifying at two points which are not general, i.e., there is a relation between them? We already saw some of these ideas, but we will take the opportunity to develop and state them more clearly. Our goal is to arrive at a more refined version of \cref{T:modifying-Fn}.

\begin{example}[Modifying at two points at a time]
  To review notation, $B_n$ will denote the distinguished $(-n)$-section in $\Fbb_n$ when $n > 0$. We will use $\pi_n \cn \Fbb_n = \Pbb(\Oc \oplus \Oc(n)) \rarr \Pbb^1$ to denote the natural projection. When $n = 0$ and $\Fbb_0 \cong \Pbb^1 \x \Pbb^1$, we will treat $\pi_0$ as the projection to the first factor.

  \begin{description}
  \item[Case 1, $n \geq 2$.]
    If we choose $p_1 \notin B_n$, then $M(\Fbb_n, p_1) \cong \Fbb_{n-1}$. In order to arrive at $\Fbb_{n-2}$, the second point should avoid the fiber of the first and the transform of $B_{n-1}$, that is $B_n$.
    
  \item[Case 2, $n = 1$.]
    The first point should avoid $B_1$, but the second only need to avoid the fiber of the first.
    
  \item[Case 3, $n = 0$.]
    There is no restriction on the first point; all modifications lead to $M(\Fbb_0, p_1) \cong \Fbb_1$. In choosing the second point, we would like to avoid the fiber of the first and the transform of the $(-1)$-curve which is the horizontal section through $p_1$.

    The horizontal sections on $M(\Fbb_0, p_1, p_2) \cong \Fbb_0$ are even more interesting. They form a pencil of $0$-sections. Tracing them back through $m_{p_1, p_2}$, their transforms in the original surface remain sections, pass through $p_1$ and $p_2$, and have self-intersection $2$ (see \cref{F:modification-self-intersection}). The only line bundle which fits the bill is $\Oc_{\Fbb_0}(1,1)$; its global sections form a $4$-dimensional vector space. Passing through each of the points $p_1$ and $p_2$ imposes a single linear condition, so indeed we are left with a pencil of $(1,1)$-curves passing through $p_1$ and $p_2$.
  \end{description}

  \newcommand{\arr}[2]{\ensuremath{\xymatrix@C=#1{\ar[r]^{#2}&}}}
  \newcommand{\darr}[2]{\ensuremath{\xymatrix@C=#1{\ar@{-->}[r]^{#2}&}}}

  \begin{figure}[h!]
    \centering
    \def\svgwidth{0.7\textwidth}
    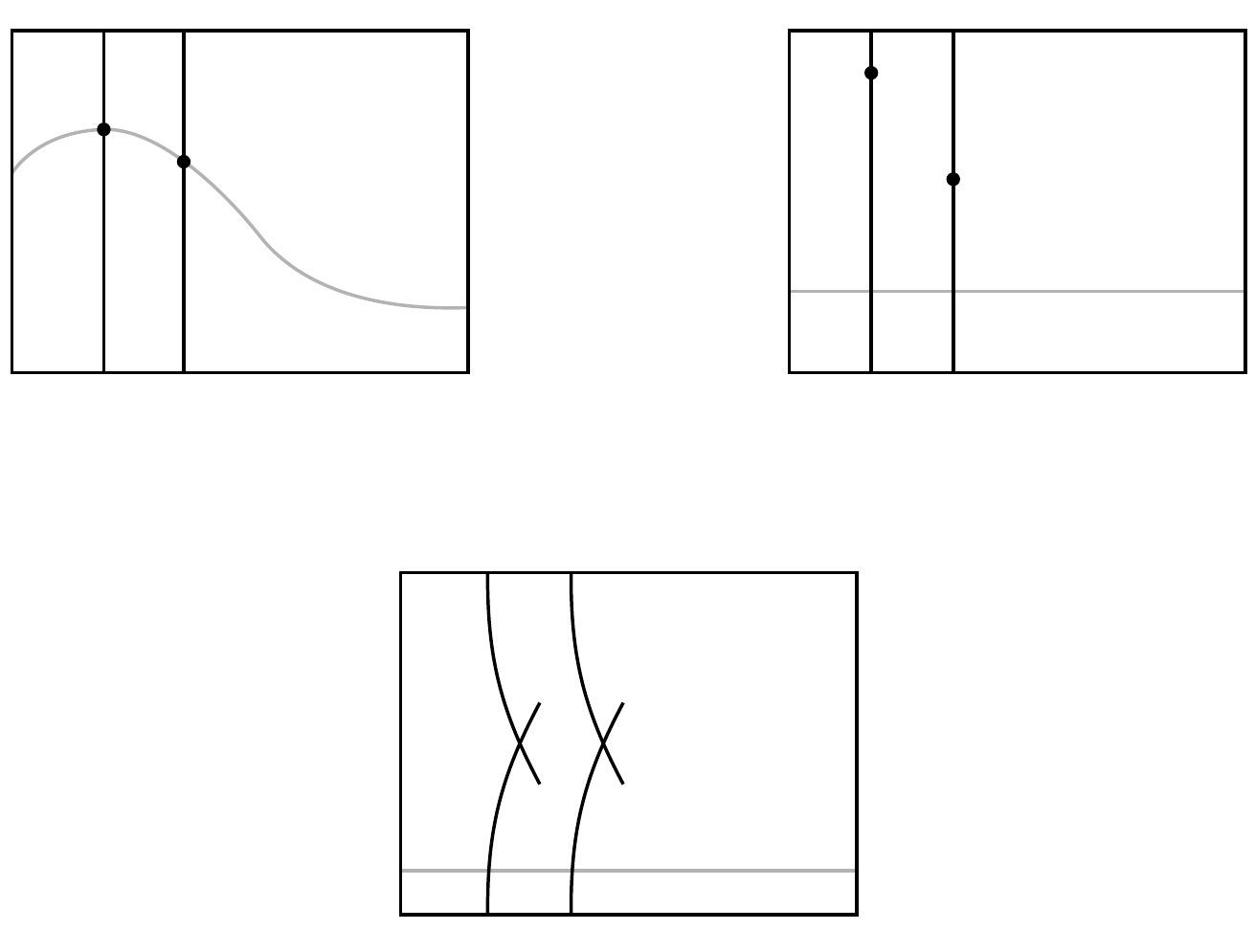
    \caption{A horizontal section $D$ in $M(\Fbb_0, p_1, p_2) \cong \Fbb_0$ transforms to a $(1,1)$-curve $\wtilde{\wtilde{D}}$ in $\Fbb_0$ with self-intersection $2$.}
    \label{F:modification-self-intersection}
  \end{figure}

  The pattern we described generalizes as follows.

  \begin{proposition}
    \label{T:modifying-F0}
    Let $p_1, \dots, p_{2k} \in \Fbb_0$ be points lying in distinct fibers of $\pi_0 \cn \Fbb_0 \rarr \Pbb^1$. If the modified surface $S = M(\Fbb_0, p_1, \dots, p_{2k})$ is isomorphic to $\Fbb_0$, then under $\Fbb_0 \drarr S$ the pencil of horizontal sections in $S$ transforms to the pencil of $(k,1)$-curves passing through all $p_i \in \Fbb_0$.
  \end{proposition}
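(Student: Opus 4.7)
The plan is to compute the class of the birational transform via intersection theory on the common resolution of the birational map $\Fbb_0 \drarr S$. Let $\sigma \cn \wtilde{S} \rarr \Fbb_0$ denote the blow-up of $\Fbb_0$ at $p_1, \dots, p_{2k}$, with exceptional divisors $E_1, \dots, E_{2k}$. Because the $p_i$ lie in distinct fibers, the proper transforms $\wtilde{F}_i$ of the fibers $F_i = \pi_0^{-1}(\pi_0(p_i))$ are pairwise disjoint $(-1)$-curves, and contracting them simultaneously produces the morphism $\pi' \cn \wtilde{S} \rarr S$ realizing the modification. Let $H_1, H_2$ denote the pullbacks via $\sigma$ of the two ruling classes on $\Fbb_0$, so that in the Picard group of $\wtilde{S}$ we have $\wtilde{F}_i = H_1 - E_i$.

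Let $D \subset S$ be a general horizontal section, i.e., a general $0$-section of the ruling $\pi_0^S \cn S \rarr \Pbb^1$, chosen to avoid the distinguished points $\wtilde{p}_i = \pi'(\wtilde{F}_i)$. Then $\pi'$ restricts to an isomorphism on $\widehat{D} := \pi'^{-1}(D)$, so $\pi'^\ast D = \widehat{D}$ and $\widehat{D}^2 = D^2 = 0$. Writing
\[
[\widehat{D}] = a H_1 + b H_2 - \sum_{i=1}^{2k} m_i E_i,
\]
I would pin down the coefficients by three intersection computations on $\wtilde{S}$. A generic fiber of $\pi_0 \circ \sigma$ has class $H_1$ and maps isomorphically under $\pi'$ to a fiber of $\pi_0^S$, so $b = \widehat{D} \cdot H_1 = 1$. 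The image $\pi'(E_i)$ is exactly the fiber of $\pi_0^S$ over $\pi_0(p_i)$---the component $\wtilde{F}_i$ of the degenerate fiber $\wtilde{F}_i + E_i$ of $\pi_0 \circ \sigma$ is contracted by $\pi'$ to $\wtilde{p}_i$---so the projection formula yields $m_i = \widehat{D} \cdot E_i = D \cdot \pi'(E_i) = 1$. Finally,
\[
0 = \widehat{D}^2 = 2ab - \sum_i m_i^2 = 2a - 2k
\]
forces $a = k$. Pushing forward by $\sigma$ annihilates the exceptional divisors, so $[\wtilde{\wtilde{D}}] = \sigma_\ast[\widehat{D}] = kH_1 + H_2$ in $\Fbb_0$; combined with the multiplicities $m_i = 1$, this exhibits $\wtilde{\wtilde{D}}$ as a $(k,1)$-curve passing through each $p_i$ exactly once.

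Letting $D$ vary in the pencil of horizontal sections of $S$ produces an injective morphism of $\Pbb^1$-families into the linear system of $(k,1)$-curves on $\Fbb_0$ through $p_1, \dots, p_{2k}$. Running the same intersection-theoretic computation in reverse, a general $(k,1)$-curve through all $p_i$ pulls back to a curve of class $kH_1 + H_2 - \sum E_i$ on $\wtilde{S}$ whose push-forward by $\pi'$ is a $0$-section of $\pi_0^S$, so the correspondence is bijective and the image is the full pencil of $(k,1)$-curves through $p_1, \dots, p_{2k}$. The main technical step is the identification of $\pi'(E_i)$ with a fiber of the new ruling $\pi_0^S$; this is where the hypothesis that the $p_i$ lie in distinct fibers enters, guaranteeing that $\pi'$ is a simultaneous contraction of pairwise disjoint $(-1)$-curves and that each $E_i$ survives as a smooth rational curve on $S$.
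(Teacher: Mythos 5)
Your proof is correct and follows essentially the same route as the paper's: both arguments work on the common resolution of $\Fbb_0 \drarr S$ and use intersection theory to pin down the class of the transform as a $(k,1)$-curve through the $p_i$ (the paper tracks the self-intersection incrementally, $0 \mapsto 2k$, and uses the fact that sections remain sections; you compute the full divisor class $kH_1 + H_2 - \sum E_i$ directly, which amounts to the same computation made explicit). Your write-up is somewhat more detailed, in particular in justifying the multiplicities $m_i = 1$ and the reverse direction, but no new idea is involved.
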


  \begin{remark}
    \label{T:modifying-F0-final-modification}
    Note that we only impose a condition on the final modification $M(\Fbb_0, p_1, \dots, p_{2k})$. The result applies even if one of the intermediate modifications $M(\Fbb_0, p_1, \dots, p_i)$ is not isomorphic to $\Fbb_0$ or $\Fbb_1$.
  \end{remark}

  The result above will be useful to us in the following form.

  \begin{corollary}
    \label{T:modifying-F0-section}
    Let $p_1, \dots, p_{2k+2} \in \Fbb_0$ be points lying on distinct fibers of $\pi_0 \cn \Fbb_0 \rarr \Pbb^1$. Suppose the surface $M(\Fbb_0, p_1, \dots, p_{2k})$ is isomorphic to $\Fbb_0$. Then the surface $M(\Fbb_0, p_1, \dots, p_{2k+2})$ is isomorphic to $\Fbb_0$ if and only if $p_1, \dots, p_{2k+2}$ do not lie on a $(k,1)$-curve.
  \end{corollary}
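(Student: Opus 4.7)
The plan is to track the two remaining modifications after reaching $S_0 := M(\Fbb_0, p_1, \dots, p_{2k}) \cong \Fbb_0$ and identify, via \cref{T:modifying-F0}, the geometric meaning on the original $\Fbb_0$ of the distinguished $(-1)$-section produced by the $(2k{+}1)$-st modification.

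First, I would invoke \cref{T:modifying-F0} for the first $2k$ modifications: under the birational map $m \cn \Fbb_0 \drarr S_0$, the pencil of horizontal sections of $S_0$ corresponds to the pencil $\Pc$ of $(k,1)$-curves in $\Fbb_0$ passing through $p_1, \dots, p_{2k}$. Because the $p_i$ lie on distinct fibers of $\pi_0$, the point $p_{2k+1}$ is not in the indeterminacy locus of $m$, so it has a well-defined image $\wtilde{p}_{2k+1} \in S_0$. Since $S_0 \cong \Fbb_0$, the next modification gives $S_1 := M(\Fbb_0, p_1, \dots, p_{2k+1}) \cong \Fbb_1$ (Case 3 of \cref{T:rank-2-modifications} applied at $S_0$). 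The distinguished $(-1)$-section $B_1 \subset S_1$ is the proper transform of the unique horizontal section $H \subset S_0$ through $\wtilde{p}_{2k+1}$.

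Next, I would identify $B_1$ on the original $\Fbb_0$. Under $m$, the horizontal section $H$ pulls back to the unique member $D$ of the pencil $\Pc$ passing through $p_{2k+1}$, which is precisely the unique $(k,1)$-curve in $\Fbb_0$ containing $p_1, \dots, p_{2k+1}$. Now consider the final modification at $p_{2k+2}$. Again $p_{2k+2}$ lies on a distinct fiber, so it has a well-defined image $\wtilde{p}_{2k+2} \in S_1$. By Case 2 of \cref{T:rank-2-modifications}, the surface $M(S_1, \wtilde{p}_{2k+2}) = M(\Fbb_0, p_1, \dots, p_{2k+2})$ is isomorphic to $\Fbb_0$ if and only if $\wtilde{p}_{2k+2} \notin B_1$, and is $\Fbb_2$ otherwise.

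It remains to translate the condition $\wtilde{p}_{2k+2} \in B_1$ back to $\Fbb_0$. Since $B_1$ is the proper transform of $H$ under $m_{S_0, \wtilde{p}_{2k+1}}$ and $H = m_\ast D$ (away from indeterminacy), the condition $\wtilde{p}_{2k+2} \in B_1$ is equivalent to $p_{2k+2} \in D$, i.e., $p_{2k+2}$ lies on the unique $(k,1)$-curve through $p_1, \dots, p_{2k+1}$. This in turn is equivalent to all $2k+2$ points lying on a common $(k,1)$-curve, proving the claim. The main technical point to handle carefully is the bookkeeping of proper transforms across the three birational maps involved; the verification that the relevant points always lie outside the indeterminacy loci is ensured by the assumption that all $p_i$ lie on distinct fibers of $\pi_0$.
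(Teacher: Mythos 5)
Your proof is correct and is essentially the argument the paper intends: the corollary follows from \cref{T:modifying-F0} by performing the last two modifications on $S_0 \cong \Fbb_0$, identifying the $(-1)$-section of the intermediate $\Fbb_1$ with the unique $(k,1)$-curve through $p_1,\dots,p_{2k+1}$, and applying the $\Fbb_n$ modification rules. The only blemish is a citation slip (the $\Fbb_0 \to \Fbb_1$ step is Case 1, not Case 3, of \cref{T:rank-2-modifications}); the mathematics is fine.
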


  \begin{proof}[Proof of \cref{T:modifying-F0}]
    Horizontal sections on $S = M(\Fbb_0, p_1, \dots, p_{2k}) \cong \Fbb_0$ form a pencil and have self-intersection $0$. Reversing a modification involves blowing down $2k$ fibers, so the self-intersection of a transformed section increases by $1$ for each point of modification. It follows that the curves which correspond to horizontal sections in $S$ have self-intersection $2k$ on $\Fbb_0$. Since sections remain sections, these are curves on $\Fbb_0$ of the form $(k',1)$ for some $k' \geq 0$. Their self-intersection is $k' + k' = 2k$, so $k' = k$.

    Starting with a $(k,1)$-curve in $\Fbb_0$ passing through all $p_i$, we can reverse the argument and show that its transform in $S$ is a $0$-section. This completes the argument.
  \end{proof}
\end{example}

There is another useful result worth isolating from the proof of \cref{T:rational-curve-P3-secants}.

\begin{proposition}
  \label{T:different-leaves}
  Let $X, Y$ be normal varieties, $X$ is irreducible, and $f, g \cn X \rarr Y$ are two regular morphisms. Consider a pencil $\{ H_t \}_{t \in \Pbb^t}$ on $Y$ with base locus $B \subset Y$ and associated line bundle $L$. If
  \begin{enumeratea}
  \item
    $f^\ast L$ is not isomorphic to $g^\ast L$, and
  \item
    the codimension of $f^{-1}(B) \cup g^{-1}(B) \subset X$ is at least two,
  \end{enumeratea}
  then for a general point $x \in X$ the images $f(x), g(x) \in Y$ lie in distinct members of the pencil $\{ H_t \}$.
\end{proposition}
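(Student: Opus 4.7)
The plan is to derive a contradiction from the assumption that $f(x)$ and $g(x)$ lie in a common pencil member generically on $X$. The pencil $\{H_t\}$ corresponds to a two-dimensional subspace of $\H^0(Y,L)$ and defines a rational map $\psi \cn Y \drarr \Pbb^1$ that is regular on $Y \setminus B$, with $\psi^\ast \Oc_{\Pbb^1}(1) \cong L|_{Y \setminus B}$. I would then compose with $f$ and $g$ to obtain rational maps $\psi \circ f, \psi \circ g \cn X \drarr \Pbb^1$, both of which are regular on the open subset $U = X \setminus (f^{-1}(B) \cup g^{-1}(B))$. Since hypothesis (b) says $X \setminus U$ has codimension at least two, the set $U$ is dense in $X$, and both $f^{-1}(B)$ and $g^{-1}(B)$ individually have codimension at least two.

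For a point $x \in U$, the images $f(x)$ and $g(x)$ belong to the same pencil member if and only if $\psi(f(x)) = \psi(g(x))$. The locus in $U$ where this equality holds is the preimage of the diagonal under $(\psi \circ f, \psi \circ g) \cn U \rarr \Pbb^1 \x \Pbb^1$, hence closed in $U$. It therefore suffices to show that this locus is a proper subset of $U$; equivalently, I may suppose for contradiction that $\psi \circ f = \psi \circ g$ on all of $U$ and aim to contradict hypothesis (a).

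Pulling back $\Oc_{\Pbb^1}(1)$ along the two (now equal) morphisms, functoriality gives $(\psi \circ f)^\ast \Oc_{\Pbb^1}(1) \cong f^\ast L|_U$ and similarly for $g$, from which one reads off an isomorphism $f^\ast L|_U \cong g^\ast L|_U$. The final step, which I expect to be the only subtle point, invokes normality of $X$ together with the codimension hypothesis (b): since $X \setminus U$ has codimension at least two, line bundles on $U$ and isomorphisms between them extend uniquely to $X$ (equivalently, $\Pic X \rarr \Pic U$ is injective, so triviality of $f^\ast L \otimes (g^\ast L)^{-1}$ on $U$ implies triviality on $X$). Applying this to the isomorphism above produces $f^\ast L \cong g^\ast L$ on all of $X$, contradicting (a). Beyond this extension argument, everything reduces to functoriality of pullback and separatedness of $\Pbb^1$.
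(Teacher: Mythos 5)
Your proposal is correct and follows essentially the same route as the paper: restrict to the open set $U = X \setminus (f^{-1}(B) \cup g^{-1}(B))$, use normality of $Y$ to get the pencil map off the base locus, and use normality of $X$ together with the codimension-two hypothesis to conclude that $\Pic(X) \rarr \Pic(U)$ is injective, so the non-isomorphism $f^\ast L \not\cong g^\ast L$ persists on $U$ and forces the two compositions with the pencil map to differ. The only cosmetic difference is that you phrase it as a contradiction while the paper argues the contrapositive directly.
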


\begin{proof}
  Since $Y$ is normal, the base locus $B$ of $\{ H_t \}$ has codimension at least two. The pencil has an associated map $\phi \cn V \rarr \Pbb^1$ where $V = Y \setminus B$. Let $U = X \setminus (f^{-1}(B) \cup g^{-1}(B))$. We will use $i_U \cn U \rarr X$ and $i_V \cn V \rarr Y$ to denote the natural inclusions.
  \[\xymatrix{
    U \ar@<0.5ex>[r]^-{f|_U} \ar@<-0.5ex>[r]_-{g|_U} \ar[d]_-{i_U} & V \ar[r]^-{\phi} \ar[d]^-{i_V} & \Pbb^1 \\
    X \ar@<0.5ex>[r]^-{f} \ar@<-0.5ex>[r]_-{g} & Y
  }\]
  Since the complements of both $U$ and $V$ are of codimension at least two, it follows that $i_U^\ast \cn \Pic(X) \rarr \Pic(U)$ and $i_V^\ast \cn \Pic(Y) \rarr \Pic(Y)$ are both isomorphisms. By construction $\phi^\ast \Oc_{\Pbb^1}(1) \cong i_V^\ast L$. Combining this with the hypothesis $f^\ast L \not\cong g^\ast L$, it follows that
  \[
  (\phi \circ f|_U)^\ast \Oc_{\Pbb^1}(1) \not\cong (\phi \circ g|_U)^\ast \Oc_{\Pbb^1}(1).
  \]
  This demonstrates that $\phi \circ f|_U$ and $\phi \circ g|_U$ are not identical, hence proving our claim.
\end{proof}

We are now ready for the central argument of this section.

\begin{proof}[Proof of \cref{T:rational-curve-P3-secants}]
  Our first goal is to translate the question into the language of rational surfaces. Consider the rational morphism $f \cn C \x C \drarr \Pbb N_C$ which sends the point $(p,q) \in C \x C$ to $T_p [p,q]$, the tangent space to the secant line joining $p$ and $q$, viewed as a point of $\Pbb N_{C,p}$. This definition makes sense when $p \neq q$ and the secant $[p,q]$ is not tangent at $p$. Trisecant lines which are also tangent, called \emph{tangential trisecants}, have been a classical topic of interest. If $C$ is a general rational curve, then using osculating planes $f$ extends to the entire diagonal and the number of problematic points, corresponding to tangential trisecants, is $2(d-2)(d-3)$ (see \cref{T:tangential-trisecants}).

  We would like to resolve $f$ and $f \circ \sigma$ simultaneously, where $\sigma \cn C \x C \rarr C \x C$ is the \emph{swap} morphism given by $\sigma(p,q) = (q,p)$. Let $s_1, \dots, s_{2(d-2)(d-3)} \in C \x C$ comprise the complement of the locus where $f$ is well-defined, and let $s_i' = \sigma(s_i)$ be their swaps. The blow-up $\epsilon \cn S = \Bl_{\{s_i, s_i'\}} C \x C \rarr C \x C$ is the minimal simultaneous resolution of $f$ and $f \circ \sigma$. If $\wtilde{\sigma} \cn S \rarr S$ is the lift of $\sigma$ and $\wtilde{f} \cn S \rarr \Pbb N_C$ is the resolution of $f$, then the resolution of $f \circ \sigma$ is $\wtilde{f} \circ \wtilde{\sigma}$.

  \[\xymatrix{
    & S \ar[d]^-{\epsilon} \ar[dl]_-{\wtilde{f}} \ar[dr]^-{\wtilde{f} \circ \wtilde{\sigma}} \\
    \Pbb N_C & C \x C \ar@{-->}[l]^-{f} \ar@{-->}[r]_-{f \circ \sigma} & \Pbb N_C
  }\]

  Note that to resolve $f$ we only need to blow up $C \x C$ at isolated points. This follows since the morphism $f \cn C \x C \drarr \Pbb N_C$ is given by two pencils, one of them (the projection on the first factor) has no base locus, and the second one has reduced base locus for general $C$ (see \cite[p.~291]{Griffiths-Harris} and \cite{Johnsen-plane-projections}).

  Let us compute the degree of $f \cn C \x C \drarr \Pbb N_C$ (and hence of $\wtilde{f}$). A point on $\Pbb N_C$ corresponds to a point $p \in C$ and a line in the normal space $N_{C,p}$. The preimage of this line under the quotient morphism $T_{\Pbb^3,p} \rarr N_{C,p}$ yields a plane in the tangent space at $p$ containing the line $T_p C$. We can treat the 2-plane in the tangent space as an actual plane $H \subset \Pbb^3$. The degree problem amounts to counting the number of secant lines to $C$ contained in $H$. We already know that $H$ contains $p \in C$. We have the freedom to pick a general $H$ as long as $T_p C \subset T_p H$. Since $H$ is tangent to $C$ and the point $p$ is not a flex of $C$, the intersection $H \cap C$ has multiplicity $2$ at $p$. The curve $C$ has degree $d$, hence $H$ intersects $C$ transversely in $d-2$ points away from $p \in C$ (see \cref{F:P3-rational-degree}). This demonstrates that $\deg f = \deg \wtilde{f} = d-2$. When $d = 3$ the map $f \cn C \x C \rarr \Pbb N_C$ is an isomorphism.

  \begin{figure}[h!]
    \centering
    \def\svgwidth{0.5\textwidth}
    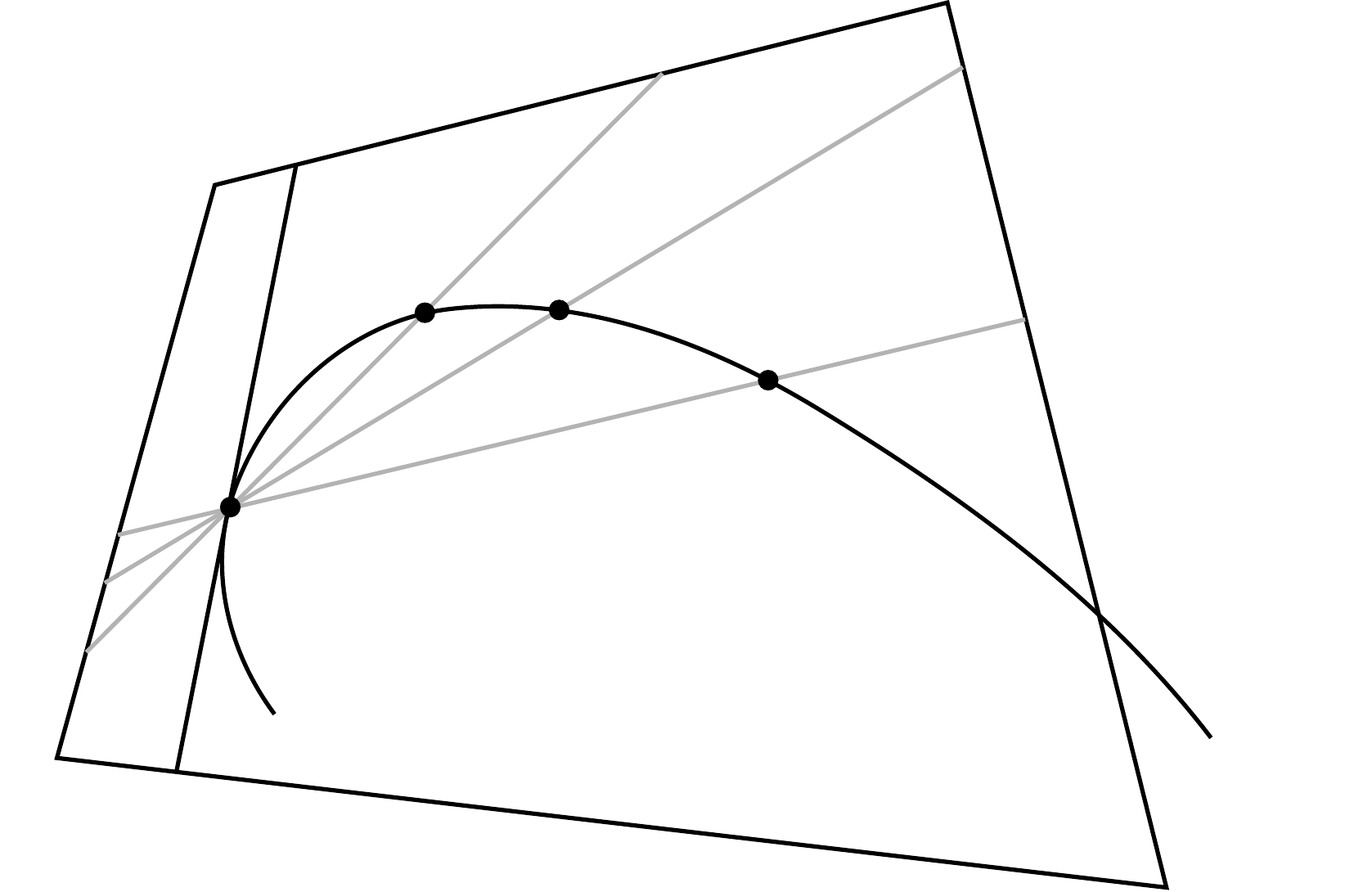
    \caption{Computing the degree of $f \cn C \x C \drarr \Pbb N_C$}
    \label{F:P3-rational-degree}
  \end{figure}
  
  Our next goal is to determine how line bundles behave under pullbacks via $\wtilde{f}$ and $\wtilde{f} \circ \wtilde{\sigma}$. Set $E = \epsilon^{-1}(\{ s_i \})$ and $E' = \epsilon^{-1}(\{ s_i' \}) = \wtilde{\sigma}(E)$. Since $C$ is rational, all line bundles on $C \x C$ are of the form $\Oc_{C \x C}(a,b)$. For convenience, we will denote $\Oc_S(a,b) = \epsilon^\ast \Oc_{C \x C}(a,b)$. It is easy to characterize $\wtilde{\sigma}$ using this notation:
  \[
  \wtilde{\sigma}^\ast \Oc_S(a,b) = \Oc_S(b,a), \qquad
  \wtilde{\sigma}^\ast \Oc_S(E) = \Oc_S(E'), \qquad
  \wtilde{\sigma}^\ast \Oc_S(E') = \Oc_S(E).
  \]
  Since we can choose $C$ to be general, its normal bundle $N_C$ is balanced (\cref{T:rational-curves-interpolation}) and $\Pbb N_C \cong C \x \Pbb^1 \cong \Fbb_0$. Since $f$, and hence $\wtilde{f}$, respect the projection to the first factor, the pullback of a fiber is a fiber and $\wtilde{f}^\ast \Oc_{\Pbb N_C}(1,0) = \Oc_S(1,0)$. By the construction of $\wtilde{f}$, it follows that $\wtilde{f}^\ast \Oc_{\Pbb N_C}(0,1) = \Oc_S(a,b)(-E)$ for some $a,b \in \Zbb$. To compute $a$ and $b$, we use the intersection forms on $\Pic S$ and $\Pic \Pbb N_C$. Consider a point in $\Pbb N_C$ as the intersection of a $(0,1)$ and a $(1,0)$ fiber.
  \begin{align*}
    b
    &= \Oc_S(1,0) \cdot \Oc_S(a,b)(-E) \\
    &= \wtilde{f}^\ast \Oc_{\Pbb N_C}(1,0) \cdot \wtilde{f}^\ast \Oc_{\Pbb N_C}(0,1) \\
    &= (\deg\wtilde{f}) \left( \Oc_{\Pbb N_C}(1,0) \cdot \Oc_{\Pbb N_C}(0,1) \right) \\
    &= d-2
  \end{align*}
  The self-intersection of a horizontal section in $\Pbb N_C$ allows us to compute $a$.
  \begin{align*}
    0
    &= (\deg\wtilde{f}) (\Oc_{\Pbb N_C}(0,1)^2) \\
    &= (\wtilde{f}^\ast \Oc_{\Pbb N_C}(0,1))^2 \\
    &= \Oc_S(a,d-2)(-E)^2 \\
    &= 2 a (d-2) + E^2
  \end{align*}
  Since $E$ consists of $2(d-2)(d-3)$ $(-1)$-curves, it follows that $a = d-3$. In summary of our results
  \begin{align*}
    \wtilde{f}^\ast \Oc_{\Pbb N_C}(i, j)
    &= \Oc_S(i + (d-3)j, (d-2)j)(-j E), \\
    (\wtilde{f} \circ \wtilde{\sigma})^\ast \Oc_{\Pbb N_C}(i, j)
    &= \Oc_S((d-2)j, i + (d-3)j)(-j E')
  \end{align*}
  for all $i, j \in \Zbb$.
  
  The reason we constructed $f$ and $\sigma$ is the following. Let $L_1, \dots, L_k$ be secants to $C$ corresponding to the points $\ell_i = (a_i, b_i) \in C \x C \setminus (\Delta \cup \{ s_i, s_i' \})$. Then the projectivization of $N_{C \cup L_1 \cup \cdots \cup L_k}|_C$ is isomorphic to the modification of $\Pbb N_C$ at the images of $s_i$ under $f$ and $f \circ \sigma$. Our goal is to show that for general points $s_i$ we have an isomorphism
  \[
  M(\Pbb N_C, f(\ell_1), f \circ \sigma(\ell_1), \dots, f(\ell_k), f \circ \sigma(\ell_k)) \cong
  \Fbb_0.
  \]

  We will use induction on $k$. At each step, given that $\wtilde{f}^\ast \Oc_{\Pbb N_C}(k,1) \not\cong (\wtilde{f} \circ \wtilde{\sigma})^\ast \Oc_{\Pbb N_C}(k,1)$ we will apply \cref{T:modifying-F0-section,T:different-leaves}. Our description of pullbacks above implies that
  \begin{align*}
    \wtilde{f}^\ast \Oc_{\Pbb N_C}(k,1)
    &= \Oc_S(k+d-3, d-2)(-E), \\
    (\wtilde{f} \circ \wtilde{\sigma})^\ast \Oc_{\Pbb N_C}(k,1)
    &= \Oc_S(d-2, k+d-3)(-E').
  \end{align*}

  \begin{description}
  \item[Case 1, $d \geq 4$.]
    Recall that $E$ consists of $2(d-2)(d-3) > 0$ curves. Since $E$, $E'$, and $\epsilon^\ast \Pic(C \x C)$ are all linearly independent in $\Pic(S)$, it follows that the line bundles above are not isomorphic. This holds for all $k \geq 0$, so we have covered both the base and inductive cases.
    
  \item[Case 2, $d = 3$.]
    When $C \subset \Pbb^3$ is a twisted cubic, $f$ is an isomorphism so both $E$ and $E'$ are empty. For convenience, we will identify $C \x C$ and $\Pbb N_C$.

    The two line bundles above are non-isomorphic except when $k = 1$. We can summarize the modifications of $\Pbb N_C$ as follows. Choosing a first point $\ell_1 \in C \x C$ away from the diagonal leads to a modification $M(\Pbb N_C, \ell_1, \sigma(\ell_1)) \cong \Fbb_0$. No matter where we choose the second point $\ell_2$, the four points $\ell_1, \sigma(\ell_1), \ell_2, \sigma(\ell_2)$ always lie on a $(1,1)$-curve $D$ and $\Pbb N_{C \cup L_1 \cup L_2}|_C \cong \Fbb_2$. Fortunately, this makes the choice of the third point quite easy. As long as $\ell_3$ avoid the fibers of the previous four points and the curve $D$ (the transform of the $(-2)$-curve in $\Pbb N_{C \cup L_1 \cup L_2}|_C$), the resulting modification will be balanced. At this point, the induction argument goes through analogously to the case $d \geq 4$. \qedhere
  \end{description}
\end{proof}
  
\begin{remark}
  \label{T:tangential-trisecants}
  We refer to \cite[Chapters 2.4 and 2.5]{Griffiths-Harris} for a more rigorous treatment of the following material. For a discussion of the general secant plane formula, see \cite[VIII.4]{ACGH}.
  
  Let $C \subset \Pbb^3$ be a smooth curve and $p \in C$ a point. We call $H \subset \Pbb^3$ an \emph{osculating plane} to $C$ at $p$ if $H$ and $C$ have order of contact $3$ at $p$. If the tangent line $T_p C$ at $p$ has order of contact $2$ with $C$ (which is the general case in characteristic $0$), then there is a unique osculating plane through $p$. On the other hand, if $C$ meets $T_p C$ with order $3$ or higher, then every plane containing $T_p C$ meets the criteria, so there is no unique osculating plane.

  There is an alternative way to construct the osculating plane at a point $p$. Consider two other points $q, r \in C$ and the plane $H_{q,r}$ spanned by $p$, $q$, and $r$. If a unique osculating plane exists, it is the limit of $H_{q,r}$ as both $q$ and $r$ approach $p$. Since an osculating plane contains the tangent line $T_p C$, it corresponds to a point in $\Pbb N_C$. This comment allows us to extend $f \cn C \x C \drarr \Pbb N_C$ to points $(a,a)$ on the diagonal as long as $a \in C$ is not a flex.

  We claim that a general degree $d \geq 3$ rational curve $C \subset \Pbb^3$ has no flex points. Since all points on the twisted cubic are alike, this is certainly the case for $d = 3$. For higher values of $d$, we note that every smooth rational curve is the image of a rational normal curve $C' \subset \Pbb^d$ away from a $(d-4)$-plane $\Lambda \subset \Pbb^d$. Let $\pi \cn \Bl_\Lambda \Pbb^d \rarr \Pbb^3$ denote the projection morphism. Every point of $p' \in C'$ has a well-defined osculating plane $H_{p'}$ and these sweep out a $3$-fold $X \subset \Pbb^d$. A point $p \in C$ is a flex if and only if the $\Lambda$ intersects $H_{p'}$ nontrivially where $p'$ is the unique point of $C'$ lying above $p$. In general the intersection $\Lambda \cap H_{p'}$ is a point, and $T_p C$ is the image of $H_{p'} \subset \Pbb^d$. Since $\Lambda$ and $X$ have complementary dimensions, we can choose $\Lambda$ away from $X$. It follows that the corresponding projection $C$ will have no flex points.
  
  Our analysis of flex points demonstrates that given a general rational curve $C$, we can define $f \cn C \x C \drarr \Pbb N_C$ along the diagonal. Tangential trisecants are the actual obstacle to extending $f$ to a regular morphism. Unfortunately, as soon as $d \geq 4$ they always exist. To illustrate our point, let us consider $d = 4$. A rational quartic curve $C$ lies on a smooth quadric surface $Q \subset \Pbb^3$ as a $(1,3)$-curve. Bezout's theorem implies that all trisecants to $C$ must be contained in $Q$. In fact, they are precisely the lines from one of the rulings on $Q$ (to be precise, the pencil of $(1,0)$-curves). Contracting the lines of the trisecant ruling produces a $3$-sheeted morphism $C \rarr \Pbb^1$. As long as $C$ has no flex points, this morphism is simply ramified. The Riemann-Hurwitz formula implies there are exactly $4$ simple ramification points, each corresponding to a tangential trisecant.

  While the method we presented for $d = 4$ does not generalize, there is another approach to counting tangential trisecants via what is called the trisecant correspondence (see \cite[p.~290-294]{Griffiths-Harris}). In modern language, this is a curve $D \subset C \x C$ defined as the closure of points $(p,q)$ such that $p \neq q$ and the line $[p,q]$ is trisecant to $C$. By projecting $C$ away from a general point on it and counting the number of nodes on the image curve in $\Pbb^2$ we can compute the projection degrees
  \[
  \deg(\pr_1 \cn D \rightarrow C) =
  \deg(\pr_2 \cn D \rightarrow C) =
  (d-2)(d-3).
  \]
  Tangential trisecants correspond to points in $D \cap \Delta$ where $\Delta \subset C \x C$ is the diagonal. In conclusion, the number of tangential trisecants is
  \[
  \#(D \cap \Delta) =
  \deg(\pr_1) + \deg(\pr_2) =
  2(d-2)(d-3).
  \]
\end{remark}

\begin{remark}
  \label{T:quintic-genus-2}
  Just because the union of a twisted cubic and two secant lines does not satisfy interpolation does not mean a priori that the general genus $2$ quintic curve indeed does not satisfy interpolation. We would like to fill this gap by demonstrating that a general such curve does not satisfy the full $(2^{10})$-interpolation, but it satisfies the slightly weaker $(2^9)$-interpolation.

  A general genus $2$ quintic curve $C \subset \Pbb^3$ lies on a smooth quadric surface $Q \subset \Pbb^3$ as a $(2,3)$-curve. The normal bundle sequence for the inclusion $C \subset Q \subset \Pbb^3$ reads
  \[\xymatrix{
    0 \ar[r] &
    N_{C/Q} \ar[r] &
    N_{C/\Pbb^3} \ar[r] &
    N_{Q/\Pbb^3}|_C \ar[r] &
    0,
  }\]
  which simplifies to
  \[\xymatrix{
    0 \ar[r] &
    \Oc_Q(2,3)|_C \ar[r] &
    N_{C/\Pbb^3} \ar[r] &
    \Oc_C(2) \ar[r] &
    0.
  }\]
  The first and third member of the sequence are line bundles of degrees $12$ an $10$ respectively. To see that $C$ does not satisfy $(2^{10})$-interpolation, we use the Jacobi inversion theorem (see \cite[I.3]{ACGH} and \cite[Chapter 2.2]{Griffiths-Harris}) to choose a degree $10$-divisor $D = \sum_{i=1}^{10} p_i$ such that $\Oc_Q(2,3)|_C(-D)$ is non-special (i.e., non-hyperelliptic) and $\Oc_C(2)(-D) \not\cong \Oc_C$. Then
  \[
  \h^0(\Oc_Q(2,3)|_C(-D)) = 1
  \quad\textrm{and}\quad
  \h^0(\Oc_C(2)(-D)) = 0,
  \]
  so
  \[
  \H^0(N_{C/\Pbb^3}(-D)) \cong
  \H^0(\Oc_Q(2,3)|_C(-D))
  \]
  is $1$-dimensional which is higher than the expected $0$.

  If $D$ were a general (effective) divisor of degree $9$, then the residual line bundle to $\Oc_Q(2,3)|_C(-D)$ has negative degree $-1$, so
  \[
  \h^0(N_{C/\Pbb^3}) =
  \h^0(\Oc_Q(2,3)|_C(-D)) + \h^0(\Oc_C(2)(-D)).
  \]
  As long as $D$ is such that $\Oc_C(2)(-D)$ is non-special, then $\h^0(N_{C/\Pbb^3}) = 2$ as expected.

  There is an alternative explanation which is helpful in building our geometric intuition about interpolation. We already mentioned that a genus $2$ quintic curve lies on a quadric. The space of quadrics $\Pbb \H^0(\Oc_{\Pbb^3}(2))$ has dimension $\binom{3+2}{2}-1 = 9$. There is a unique quadric through $9$ general points but none through $10$ general points. In particular, given $10$ general points we should not expect they are all contained in a genus $2$ quintic curve, even though the normal bundle of such a curve has $20 = 2 \cdot 10$ sections.
\end{remark}


\section{The equivalence of strong and regular interpolation}
\label{S:strong-regular}

The goal of this section is to explore the relation between regular and strong interpolation. While it may seem that the latter is strictly stronger than the former, this is not the case. In fact, we already proved the two notions are equivalent for rank $2$ bundles (\cref{T:strong-interpolation-2}). We will extend this result to arbitrary rank.

\begin{theorem}
  \label{T:strong-interpolation}
  Strong and regular interpolation are equivalent.
\end{theorem}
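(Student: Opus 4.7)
The direction ``strong implies regular'' holds immediately from the definitions, since $((\rank E)^q, r)$ is itself an admissible tuple. The plan for the converse is to reduce to the dominance implication \cref{T:interpolation-implication}, which asserts that $\lambda$-interpolation implies $\lambda'$-interpolation whenever $\lambda' \trianglelefteq \lambda$.

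Concretely, set $n = \rank E$ and $h^0(E) = nq + r$ with $0 \le r < n$, and write $\lambda = (n^q, r)$. For any admissible tuple $\mu$ (weakly decreasing, $\mu_i \le n$, and $\sum_i \mu_i \le nq + r$), the partial sums satisfy
\[
\sum_{i \le k} \mu_i \le \min\{kn,\ nq + r\} = \sum_{i \le k} \lambda_i,
\]
so $\mu \trianglelefteq \lambda$. Regular interpolation is by definition $\lambda$-interpolation, so invoking \cref{T:interpolation-implication} would furnish $\mu$-interpolation for every admissible $\mu$, which is precisely the definition of strong interpolation.

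The substance of the argument therefore lies in \cref{T:interpolation-implication}, and this is where I expect the main obstacle to be. My plan for that result would be to decompose any dominance $\lambda' \trianglelefteq \lambda$ into a finite chain of elementary moves: entry reductions (already handled by \cref{T:reducing-lambda}) and ``lowering'' moves that transfer a single box from row $i$ to row $j > i$ while preserving the weakly decreasing shape. For a lowering move, I would start with $\lambda$-interpolation data $(p_k, V_k)$ producing a modification $F$ with $\h^0(F) = \h^0(E) - \sum_k \lambda_k$, enlarge $V_i$ to some $V'_i \supset V_i$ of codimension $\lambda_i - 1$ (which raises $\h^0$ by one via \cref{T:reducing-lambda}), and then shrink $V_j$ to a subspace $V'_j \subset V_j$ of codimension $\lambda_j + 1$ in $E_{p_j}$.

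The key verification is that for a generic such $V'_j$, the evaluation map from sections of the intermediate modification $G$ onto the one-dimensional quotient $V_j/V'_j$ is surjective, so that $\h^0$ drops back by one and the final modification realizes the target value $\h^0(E) - \sum_k \lambda'_k$. This reduces to showing that $\H^0(G) \to V_j \subset E_{p_j}$ has nonzero image, which is the delicate step: I expect it to follow by choosing the original $\lambda$-interpolation datum sufficiently general, exploiting the upper-semicontinuity freedom in the Grassmannian bundle from \cref{T:smooth-components}. Once the lowering move is established, chaining it with \cref{T:reducing-lambda} handles an arbitrary dominance relation and thus completes the proof of both \cref{T:interpolation-implication} and \cref{T:strong-interpolation}.
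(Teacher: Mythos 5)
Your reduction of the theorem to \cref{T:interpolation-implication} is correct and is exactly the paper's reduction: every admissible $\mu$ has partial sums bounded by $\min\{kn,\ \h^0(E)\}$, hence is dominated by $((\rank E)^q, r)$, so strong interpolation follows from regular interpolation once the dominance implication is available. The divergence --- and the gap --- is in how \cref{T:interpolation-implication} itself is established. The paper does not argue by elementary moves: it proves a criterion (\cref{T:interpolation-criterion}) stating that $\lambda$-interpolation is equivalent to the existence of points $p_1, \dots, p_m$ for which each partial evaluation $\H^0(E) \rarr \bigoplus_{j \leq i} E_{p_j}$ has rank at least $\sum_{j \leq i} \lambda_j$; the nontrivial direction is supplied by a linear algebra lemma (\cref{T:algebra-lambdas}) extracting independent subspaces $\Lambda_i$ from the image of the joint evaluation. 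Since the criterion depends only on the partial sums of $\lambda$, the dominance implication is then immediate.

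The key step of your lowering move is a genuine gap, and you have flagged it yourself. Let $H$ denote the modification in which $V_i$ has been relaxed to $V'_i$ and the condition at $p_j$ has been dropped, and let $W \subset E_{p_j}$ be the image of $\H^0(H) \rarr E_{p_j}$. Since $\H^0(G)$ is the preimage of $V_j$ under this evaluation, the image of $\H^0(G) \rarr E_{p_j}$ is exactly $W \cap V_j$, so your move succeeds precisely when $W \cap V_j \neq 0$, i.e.\ (for $V_j$ generic of codimension $\lambda_j$) when $\dim W \geq \lambda_j + 1$. The original $\lambda$-datum only guarantees that the unrelaxed image $W_0$ satisfies $\dim W_0 \geq \lambda_j$, and in the boundary case $\dim W_0 = \lambda_j$ one has $W_0 \cap V_j = 0$; you then need the single extra section $\tau$ gained by enlarging $V_i$ to satisfy $\tau(p_j) \notin W_0$. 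Genericity of $V'_i$ does not obviously deliver this: the union over all choices of $V'_i$ of the available sections is just $\H^0$ of the modification with no condition at $p_i$ or $p_j$, and nothing in your setup prevents its image at $p_j$ from still equaling $W_0$. Ruling that out forces you to bound below the rank of the joint evaluation $\H^0(E) \rarr E_{p_i} \oplus E_{p_j}$ (and, for general $m$, of all the partial evaluations relative to the remaining conditions $V_k$) --- which is precisely the content the paper isolates in \cref{T:interpolation-criterion}. So the move is plausibly true, but as written the decisive step is asserted rather than proved, and an honest proof of it appears to require re-deriving something equivalent to the paper's rank criterion.
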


In turn, the equivalence rests on the following strengthening of \cref{T:reducing-lambda}.

\begin{theorem}
  \label{T:interpolation-implication}
  Let $\lambda = (\lambda_i)$ and $\lambda' = (\lambda_i')$ be two tableaux. If $\lambda' \trianglelefteq \lambda$, then $\lambda$-interpolation implies $\lambda'$-interpolation.
\end{theorem}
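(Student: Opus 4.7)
The plan is to reduce to a single elementary move in the dominance order and then analyze the resulting modification via a cohomology argument. A standard fact about partition dominance says that any $\lambda' \trianglelefteq \lambda$ can be obtained from $\lambda$ by a finite sequence of elementary moves of two types: a \emph{box removal} decreasing one $\lambda_i$ by one, or a \emph{box slide} moving one box from row $i$ to row $j > i$ while preserving the weakly decreasing property. Box removals are handled by \cref{T:reducing-lambda}, so the whole theorem reduces to the single box slide case.

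For a box slide, assume $\lambda$ and $\lambda'$ agree except $\lambda_i' = \lambda_i - 1$ and $\lambda_j' = \lambda_j + 1$ with $i < j$. Given data $(p_k, V_k)$ realizing $\lambda$-interpolation, with $E' = M(E, V_k)$ satisfying $\h^0(E') = \h^0(E) - |\lambda|$ and $\h^1(E') = 0$, I would keep the same points, keep $V_k$ for $k \neq i, j$, enlarge $V_i$ to a generic superspace $V_i' \supsetneq V_i$ of codimension $\lambda_i - 1$ in $E_{p_i}$, and shrink $V_j$ to a generic hyperplane $V_j' \subset V_j$ of codimension $\lambda_j + 1$ in $E_{p_j}$. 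The goal is to verify that $E'' = M(E, V_1, \ldots, V_i', \ldots, V_j', \ldots)$ satisfies $\h^0(E'') = \h^0(E')$, which equals $\h^0(E) - |\lambda'|$ since the total box count is preserved.

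To analyze $\h^0(E'')$, introduce the intermediate bundle $F = M(E, V_1, \ldots, V_i', \ldots, V_j, \ldots)$. It fits into two short exact sequences
\[
0 \to E' \to F \to (V_i'/V_i)_{p_i} \to 0
\quad\text{and}\quad
0 \to E'' \to F \to (V_j/V_j')_{p_j} \to 0,
\]
each with a one-dimensional skyscraper quotient. The first, combined with $\h^1(E') = 0$, yields $\h^0(F) = \h^0(E') + 1$ and $\h^1(F) = 0$. The second then shows $\h^0(E'') = \h^0(F) - \epsilon$, where $\epsilon \in \{0,1\}$ equals one precisely when $\H^0(F) \to (V_j/V_j')_{p_j}$ is surjective. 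For a generic hyperplane $V_j' \subset V_j$ this surjectivity is equivalent to the image of $\H^0(F) \to E_{p_j}$, which automatically lies in $V_j$, being nonzero.

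The main obstacle is establishing this non-vanishing. The easy case is when $\H^0(E') \to E_{p_j}$ is already nonzero, since $\H^0(E') \hookrightarrow \H^0(F)$ and the inclusion respects evaluation. The hard case is when every section of $E'$ vanishes at $p_j$, in which case one must exploit the extra section of $F$ produced by the enlargement $V_i \subset V_i'$ and argue via a genericity argument in $\Pbb(E_{p_i}/V_i)$ that for generic $V_i'$ this extra section does not vanish at $p_j$. The openness of the $\lambda$-interpolation locus (\cref{T:smooth-components}) provides the necessary flexibility: we may vary $p_i$ and $p_j$ within the open locus of the Grassmannian bundle on which $\lambda$-interpolation holds, and the bad set of $(V_i', p_i, p_j)$ on which surjectivity fails forms a proper closed subvariety of this parameter space, so a generic choice suffices. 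Once the box slide is settled, \cref{T:strong-interpolation} is an immediate corollary: if $E$ satisfies regular interpolation with $\lambda = ((\rank E)^q, r)$, then every admissible tuple $\lambda'$ satisfies $\lambda' \trianglelefteq \lambda$ by an elementary check using $\lambda_i' \leq \rank E$ and $\sum \lambda_i' \leq \h^0(E) = |\lambda|$, hence $E$ also satisfies $\lambda'$-interpolation.
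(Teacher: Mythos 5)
Your reduction to elementary moves and the cohomological bookkeeping around the intermediate bundle $F$ are fine, but the proof has a genuine gap exactly at the step you flag as the ``hard case,'' and the genericity argument you offer there cannot close it. Set $G = M(E, V_k : k \neq i)$, the modification at every point except $p_i$. Every candidate bundle $F = F_{V_i'}$ satisfies $\H^0(F_{V_i'}) = \{\sigma \in \H^0(G) : \sigma(p_i) \in V_i'\}$, and as $V_i'$ ranges over all codimension-$(\lambda_i - 1)$ superspaces of $V_i$ these subspaces sweep out all of $\H^0(G)$. Hence some choice of $V_i'$ makes $\H^0(F) \to V_j/V_j'$ surjective if and only if the evaluation $\H^0(G) \to V_j$ is nonzero --- a condition that does not depend on $V_i'$ at all. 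Equivalently, writing $H = M(E, V_k : k \neq i,j)$, one needs $\operatorname{rank}\bigl(\H^0(H) \to E_{p_j}\bigr) > \lambda_j$; the hypothesis of $\lambda$-interpolation only gives rank $\geq \lambda_j$ at $p_j$ (it places the large codimension $\lambda_i$ at $p_i$, not at $p_j$). So the assertion that ``the bad set forms a proper closed subvariety'' is precisely the statement to be proved, and no argument is given. The claim is in fact true for a general configuration --- for instance by exchanging the roles of $p_i$ and $p_j$ in the modification datum, which gives rank $\geq \lambda_i \geq \lambda_j + 2$ at $p_j$ --- but that swap needs justification (e.g.\ $p_i$ and $p_j$ must lie in interchangeable components of $C_\sm$, cf.\ \cref{T:smooth-components}), and it is this missing argument, not the genericity of $V_i'$, that carries the content. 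A secondary, smaller point: the combinatorial fact that any $\lambda' \trianglelefteq \lambda$ with $|\lambda'| \leq |\lambda|$ is reachable by box slides and removals through admissible intermediate tableaux is also asserted rather than proved.

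The paper takes a different and cleaner route that avoids this issue entirely. It first reformulates $\lambda$-interpolation (\cref{T:interpolation-criterion}) as the existence of points $p_1, \dots, p_m$ such that each partial evaluation map $\H^0(E) \rarr \bigoplus_{j \leq i} E_{p_j}$ has rank at least $\sum_{j \leq i} \lambda_j$; the nontrivial direction --- extracting an actual modification datum $(V_i)$ from these rank bounds --- is handled once and for all by the linear-algebra statement \cref{T:algebra-lambdas}. Since the criterion refers only to the partial sums of $\lambda$, and $\lambda' \trianglelefteq \lambda$ says exactly that the partial sums of $\lambda'$ are no larger, \cref{T:interpolation-implication} (and with it \cref{T:strong-interpolation}) becomes immediate, with no induction on elementary moves and no delicate non-vanishing statement to verify.
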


The heart of the matter lies in the following linear algebra result.

\begin{proposition}
  \label{T:algebra-lambdas}
  Consider a sequence of vector spaces $E_1, \dots, E_m$ and a sequence of integers $\lambda_1, \dots, \lambda_m$. If $\Lambda \subset E_1 \oplus \cdots \oplus E_m$ is a subspace satisfying
  \begin{enumeratea}
  \item
    for each $i$ the projection $\Lambda \rarr E_i$ has rank at least $\lambda_i$, and
  \item
    for each $i$ the projection $\Lambda \rarr \bigoplus_{j \leq i} E_j$ has rank at least $\sum_{j \leq i} \lambda_j$,
  \end{enumeratea}
  then there exists a sequence of subspaces $\Lambda_1, \dots, \Lambda_m \subset \Lambda$ such that
  \begin{enumeratea}
  \item
    $\dim \Lambda_i = \lambda_i$ for all $i$,
  \item
    all projection morphisms $\Lambda_i \rarr E_i$ are injective, and
  \item
    all $\Lambda_i$ are independent.
  \end{enumeratea}
\end{proposition}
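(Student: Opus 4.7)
The plan is to parametrize all candidate tuples at once and realize the two required properties as open conditions on a single irreducible variety. Specifically, consider the product of Grassmannians
\[
G = \prod_{i=1}^m \Gr(\lambda_i, \Lambda),
\]
whose points are tuples $(\Lambda_1, \dots, \Lambda_m)$ with $\dim \Lambda_i = \lambda_i$. The variety $G$ is irreducible, as a product of irreducible Grassmannians over our algebraically closed field.

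First I would isolate the open subset $U_i \subset \Gr(\lambda_i, \Lambda)$ consisting of those $\Lambda_i$ with $\Lambda_i \cap \ker(\pi_i|_\Lambda) = 0$, equivalently those on which $\pi_i$ restricts injectively. Openness follows from upper semicontinuity of the dimension of intersection with a fixed linear subspace. Nonemptiness uses hypothesis (a): the bound $\dim \pi_i(\Lambda) \geq \lambda_i$ says that $\ker(\pi_i|_\Lambda)$ has codimension at least $\lambda_i$ in $\Lambda$, so a generic $\lambda_i$-dimensional subspace of $\Lambda$ avoids it.

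Next I would isolate the open subset $W \subset G$ on which the natural sum map $\Lambda_1 \oplus \cdots \oplus \Lambda_m \to \Lambda$ is injective, i.e.\ the $\Lambda_i$ are independent. Openness is the standard lower semicontinuity of rank for a morphism of vector bundles over $G$ (pulling back the tautological subbundle from each factor). Nonemptiness comes from hypothesis (b) at $i = m$, which reads $\dim \Lambda \geq \sum_i \lambda_i$; one then builds an explicit point of $W$ by partitioning a basis of $\Lambda$ into independent pieces of the required sizes.

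The conclusion is immediate from the observation that any two nonempty open subsets of an irreducible variety must meet. Intersecting $U_1 \times \cdots \times U_m$ with $W$ inside $G$ produces a tuple satisfying all the required conditions simultaneously. I expect the only real work to be the careful verification of openness and nonemptiness of $U_i$ and $W$, both reducing to standard rank semicontinuity, so this is a mild obstacle at most. Notably, only hypothesis (b) at $i = m$ enters this argument directly; the nested conditions (b) at $i < m$ are not needed for the proof itself, but they are the natural cumulative statements matching the dominance framework from \cref{T:interpolation-implication} in which the proposition is ultimately applied.
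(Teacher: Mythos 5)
Your argument is correct, and it takes a genuinely different route from the paper. The paper proves the proposition by induction on $m$: it projects $\Lambda$ to $E_1 \oplus \cdots \oplus E_m$, applies the inductive hypothesis there, lifts the resulting subspaces back to $\Lambda$, and then chooses $\Lambda_{m+1}$ subject to two transversality conditions whose feasibility is checked by explicit dimension counts; this induction genuinely consumes the nested hypotheses (b) for every $i$, since they are exactly what is needed to apply the inductive hypothesis to the image of $\Lambda$ in each partial sum. You instead make a one-shot genericity argument on the irreducible variety $G = \prod_i \Gr(\lambda_i, \Lambda)$: each injectivity condition is a nonempty open $U_i$ (nonempty by (a), since $\ker(\pi_i|_\Lambda)$ has codimension at least $\lambda_i$ in $\Lambda$), independence is a nonempty open $W$ (nonempty because $\dim \Lambda \geq \sum_i \lambda_i$), and finitely many nonempty opens in an irreducible variety meet. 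Both proofs are sound; yours is shorter, avoids the lifting bookkeeping, and has the side benefit of showing that the intermediate hypotheses (b) for $i < m$ are superfluous for the proposition as stated --- only (a) together with (b) at $i = m$ (i.e.\ $\dim \Lambda \geq \sum_i \lambda_i$) is used, a point easily confirmed by small examples where an intermediate inequality in (b) fails yet the conclusion still holds. The full nested form of (b) is there because it is what the evaluation maps in \cref{T:interpolation-criterion} naturally provide, not because the linear algebra requires it. The one thing worth making explicit in a final write-up is that a nonempty open subset of $G$ has a $K$-point, which is automatic here since $K$ is algebraically closed.
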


\begin{remark}
  Note that the statement of \cref{T:algebra-lambdas} did not impose any order on the $\lambda_i$.
\end{remark}

\begin{proof}
  We proceed by induction on $m$. When $m = 1$, it suffices to pick a subspace $\Lambda_1 \subset \Lambda$ of dimension $\lambda_1$. Next, assume the statement holds for some $m$; we study the statement for $m + 1$. Let $\Lambda'$ be the image of $\Lambda$ in $E_1 \oplus \cdots \oplus E_m$. In order to use the inductive hypotheses hold, we construct spaces $\Lambda_1', \dots, \Lambda_m' \subset \Lambda'$ satisfying the mentioned conditions. For each $1 \leq i \leq m$, we lift $\Lambda_i'$ to $\Lambda_i  \subset \Lambda$ such that $\dim \Lambda_i = \lambda_i$. Since $\Lambda_i'$ are linearly independent, then so are $\Lambda_i$. For each $i$, the projection $\Lambda_i \rarr E_i$ factors through $\Lambda_i \rarr \Lambda_i'$ which is an isomorphism. We know that $\Lambda_i' \rarr E_i$ is injective, hence $\Lambda_i \rarr E_i$ is also injective. It remains to choose $\Lambda_{m+1} \in \Gr(\lambda_{m+1}, \Lambda)$ satisfying
  \[
  \Lambda_{m+1} \cap \left( \bigoplus_{i \leq m} E_i \right) = 0
  \qquad and \qquad
  \Lambda_{m+1} \cap \left( \bigoplus_{i \leq m} \Lambda_i \right) = 0.
  \]
  The first of these conditions implies that the $\Lambda_{m+1}$ intersects trivially the kernel of the projection morphism $\bigoplus_i E_i \rarr E_{m+1}$, so it must be injective onto its image. Together with the fact $\Lambda_1, \dots, \Lambda_m$ are linearly independent, the second condition tells us that by adding $\Lambda_{m+1}$ the list remains linearly independent. That said, we need to argue why such a choice of $\Lambda_{m+1}$ is possible. Both conditions are open, so it suffices to show they describe non-empty varieties in $\Gr(\lambda_{m+1}, \Lambda)$. Numerically, we need to verify
  \[
  \dim \Lambda_{m+1} + \dim \left(\Lambda \cap \bigoplus_{i \leq m} E_i\right) \leq \dim \Lambda
  \qquad and \qquad
  \dim \Lambda_{m+1} + \dim \left(\bigoplus_{i \leq m} \Lambda_i\right) \leq \dim \Lambda
  \]
  If we identify $\Lambda \cap (E_1 \oplus \cdots \oplus E_m)$ with the kernel of the morphism $\Lambda \rarr E_{m+1}$, then the first statement follows from the rank inequality of the morphism in question. The second statement, equivalent to $\lambda_1 + \cdots + \lambda_{m+1} \leq \dim \Lambda$, is a consequence of the lower bound on the rank of $\Lambda \rarr E_1 \oplus \cdots \oplus E_{m+1}$. This completes the proof of desired claim.
\end{proof}

This leads us to a natural criterion for interpolation.

\begin{proposition}
  \label{T:interpolation-criterion}
  A vector bundle $E$ satisfies $\lambda = (\lambda_1, \dots, \lambda_m)$ interpolation if and only if there exist $m$ distinct points $p_1, \dots, p_m$ such that: for every $i$ the evaluation map $\H^0(E) \rarr \bigoplus_{j \leq i} E_{p_j}$ has rank at least $\sum_{j \leq i} \lambda_i$.
\end{proposition}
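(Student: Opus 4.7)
The plan is to prove both directions separately; the forward direction is direct, while the backward direction rests on \cref{T:algebra-lambdas}.

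For the forward direction, suppose $E$ satisfies $\lambda$-interpolation with modification data $(p_i, V_i)$. The short exact sequence $0 \to M(E, V_i) \to E \to \bigoplus_i E_{p_i}/V_i \to 0$, combined with $\H^1(E) = 0$, shows that the identity $\h^0(M(E, V_i)) = \h^0(E) - \sum_i \lambda_i$ is equivalent to surjectivity of the evaluation $\H^0(E) \to \bigoplus_i E_{p_i}/V_i$. This map factors through $\H^0(E) \to \bigoplus_i E_{p_i}$, and composing further with the quotient onto $\bigoplus_{j \leq i} E_{p_j}/V_j$ shows that $\H^0(E) \to \bigoplus_{j \leq i} E_{p_j}$ has rank at least $\sum_{j \leq i} \lambda_j$, which is the stated rank condition.

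For the backward direction, given points $(p_i)$ satisfying the rank condition, let $\Lambda \subset \bigoplus_i E_{p_i}$ denote the image of the evaluation $\H^0(E) \to \bigoplus_i E_{p_i}$. I would first note that \cref{T:algebra-lambdas} requires, in addition to the prefix-rank condition, the pointwise condition that $\rank(\Lambda \to E_{p_i}) \geq \lambda_i$ for each $i$. This pointwise condition does not automatically follow from the prefix condition, but it can be arranged by passing to a nearby generic choice of points $(q_i)$: lower semi-continuity of rank preserves the prefix condition in an open neighborhood of $(p_i)$, while at a generic $q \in C$ the rank of $\H^0(E) \to E_q$ is at least its value at $p_1$, which is at least $\lambda_1 \geq \lambda_i$ by the first prefix condition and the weakly decreasing assumption on $\lambda$. \cref{T:algebra-lambdas} then produces subspaces $\Lambda_i \subset \Lambda$ of dimension $\lambda_i$, linearly independent, with each $\Lambda_i \to E_{q_i}$ injective; denote its image by $\bar\Lambda_i$.

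To conclude, I would choose each $V_i$ to be a complement of $\bar\Lambda_i$ in $E_{q_i}$, so that $\codim(V_i, E_{q_i}) = \lambda_i$. It suffices to show that the composition $\widehat\Lambda := \bigoplus_i \Lambda_i \hookrightarrow \Lambda \to \bigoplus_i E_{q_i}/V_i$ is an isomorphism for some choice of complements, since this forces $\Lambda + \bigoplus_i V_i = \bigoplus_i E_{q_i}$ and hence $\h^0(M(E, V_i)) = \h^0(E) - \sum_i \lambda_i$. Under the identifications $E_{q_i}/V_i \cong \bar\Lambda_i \cong \Lambda_i$, this composition takes the form $I + N$, where $I$ is the identity and $N$ depends on the complements with zero diagonal blocks. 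The main obstacle is exhibiting a choice that makes $I + N$ invertible; I plan to handle this by an incidence variety argument in $\Pbb(\widehat\Lambda) \times \prod_i \Gr(\dim E_{q_i} - \lambda_i, E_{q_i})$, using that $\pi_j \cn \widehat\Lambda \to E_{q_j}$ has rank exactly $\lambda_j$ to show that the locus of bad complements is a proper closed subvariety, so a generic choice works.
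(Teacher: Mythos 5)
Your forward direction and your reduction to general points (to secure the pointwise hypothesis of \cref{T:algebra-lambdas} via semicontinuity and the weakly decreasing order of $\lambda$) are both correct and follow the paper's argument. You have also put your finger on a real subtlety that the paper itself passes over in one sentence: after choosing $V_i$ complementary to $\bar\Lambda_i$, the map $\bigoplus_i \Lambda_i \to \bigoplus_i E_{q_i}/V_i$ is ``identity plus off-diagonal'' and its invertibility is not automatic. Unfortunately, your proposed repair does not work. First, the input to your incidence argument is false: $\pi_j \cn \widehat\Lambda \to E_{q_j}$ has rank \emph{at least} $\lambda_j$ (coming from $\Lambda_j$), but the other summands $\Lambda_i$, $i \neq j$, can contribute arbitrarily to the $j$-th coordinate, so the rank is not $\lambda_j$ in general. (If it were exactly $\lambda_j$ for every $j$, then $\widehat\Lambda = \bigoplus_j \bar\Lambda_j$ by a dimension count and no genericity argument would be needed at all.) Second, and more seriously, no genericity argument over the complements can succeed from the stated conclusion of \cref{T:algebra-lambdas}. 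Take $m = 3$, $\lambda = (1,1,1)$, $E_1 = E_2 = E_3 = K^2$ with basis $e_1, e_2$, and $\Lambda = \bigoplus_i \Lambda_i$ spanned by $u = (e_1, 0, e_1)$, $v = (0, e_2, 0)$, $w = (-e_1, e_1, -e_1)$. These $\Lambda_i$ satisfy all three conclusions of \cref{T:algebra-lambdas} (and $\Lambda$ satisfies its hypotheses, so this is a legitimate output of the inductive construction). Yet for \emph{every} choice of lines $V_i \subset E_i$: setting $\alpha = \gamma$ in $\alpha u + \beta v + \gamma w = ((\alpha - \gamma)e_1,\ \beta e_2 + \gamma e_1,\ (\alpha-\gamma)e_1)$ leaves a nonzero vector supported only in $E_2$, and one can always solve $\beta e_2 + \gamma e_1 \in V_2$ nontrivially. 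So $\widehat\Lambda \cap \bigoplus_i V_i \neq 0$ always, and the locus of ``bad'' complements is everything, not a proper closed subvariety.

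The missing ingredient is a rank bound for \emph{all} subsets $S \subseteq \{1, \dots, m\}$, not just prefixes: one needs $\rank\bigl(\H^0(E) \to \bigoplus_{j \in S} E_{q_j}\bigr) \geq \sum_{j \in S} \lambda_j$. (In the example above this fails for $S = \{1,3\}$, where the rank is $1 < 2$.) This stronger condition does hold after moving to general points: for general $(q_1, \dots, q_m)$ the rank of evaluation at $\{q_j\}_{j \in S}$ depends only on $|S|$, hence is at least $\sum_{j \leq |S|} \lambda_j \geq \sum_{j \in S} \lambda_j$ since $\lambda$ is weakly decreasing. Given the all-subsets condition, a Hall/Rado-type selection (dualize and choose $\lambda_i$-dimensional subspaces $U_i$ inside the images of $E_{q_i}^\ast \to \Lambda^\ast$ spanning $\Lambda^\ast$) produces the desired $V_i$ directly; alternatively one can strengthen \cref{T:algebra-lambdas} so that its output satisfies $\bigoplus_{i \in S} \Lambda_i \hookrightarrow \bigoplus_{i \in S} E_i$ for all $S$. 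Either way, the step you flagged genuinely requires more than what \cref{T:algebra-lambdas}, as stated, provides, and your incidence-variety plan as written cannot close it.
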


\begin{proof}
  The forward direction is simple. Let $V_i \subset E_{p_i}$ be a modification datum for $E$ such that $\h^0(M(E, V_i)) = \h^0(E) - \sum \lambda_i$. It follows that the morphism $\H^0(E) \rarr \bigoplus E_{p_i}/V_i$ is surjective; taking a further quotient we deduce that
  \[\xymatrix{
    \H^0(E) \ar[r] & \displaystyle\bigoplus_{j \leq i} E_{p_j}/V_j
  }\]
  is surjective for all $1 \leq i \leq m$. These statements are equivalent to the rank inequalities we desire.

  Conversely, we would like to choose points $p_1, \dots, p_m \in C$ such that
  \begin{enumeratea}
  \item
    \label{T:interpolation-criterion-conditions-a}
    for each $i$ the rank of $\H^0(E) \rarr E_{p_i}$ is at least $\lambda_i$, and
  \item
    \label{T:interpolation-criterion-conditions-b}
    for each $i$ the rank of $\H^0(E) \rarr \bigoplus_{j \leq i} E_{p_j}$ is at least $\sum_{j \leq i} \lambda_j$.
  \end{enumeratea}
  Our hypothesis implies that the open subset of $C^m \setminus \Delta$ cut out by condition (\ref{T:interpolation-criterion-conditions-b}) is non-empty. For $i = 1$ condition (\ref{T:interpolation-criterion-conditions-a}) and (\ref{T:interpolation-criterion-conditions-b}) are equivalent. Since rank is upper semi-continuous and $\lambda_1 \geq \cdots \geq \lambda_m$, it follows that the open specified by condition (\ref{T:interpolation-criterion-conditions-a}) is also non-empty, so we can choose the points $p_i \in C$ as desired.
  
  If $E_i = E_{p_i}$ and $\Lambda$ is the image of $\H^0(E)$ in $\bigoplus E_i$, we can apply \cref{T:algebra-lambdas}. For each $i$, the morphism $\Lambda_i \rarr E_i$ is injective, so a complement $V_i$ to its image would have codimension $\lambda_i$. Since the spaces $\Lambda_i$ are linearly independent, it follows that the evaluation $\H^0(E) \rarr \bigoplus E_i/V_i$ is surjective as desired.
\end{proof}

It is clear that \cref{T:strong-interpolation,T:interpolation-implication} follow from the interpolation criterion in \cref{T:interpolation-criterion}. There are two much simpler corollaries worth spelling out.

\begin{corollary}[Replacing $\lambda$]
  \label{T:replacing-lambda}
  If $\lambda' \trianglelefteq \lambda$, then $(\lambda', \mu) \trianglelefteq (\lambda, \mu)$. In particular, $(\lambda, \mu)$-interpolation implies $(\lambda', \mu)$-interpolation.
\end{corollary}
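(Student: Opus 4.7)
The plan is to split the proof into two steps. First I would verify the purely combinatorial assertion $(\lambda',\mu) \trianglelefteq (\lambda,\mu)$. The implication on interpolation properties will then fall out immediately by applying \cref{T:interpolation-implication} to the two tableaux $(\lambda,\mu)$ and $(\lambda',\mu)$. Since \cref{T:interpolation-implication} has already been established through the interpolation criterion \cref{T:interpolation-criterion}, essentially all that remains is the dominance check.

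To verify the dominance, I would first pad the shorter of $\lambda, \lambda'$ with trailing zeros so that both have the same length $m$; this is harmless because trailing zeros do not affect any partial sum, and the order hypothesis $\lambda' \trianglelefteq \lambda$ is preserved. Writing $\mu = (\mu_1, \dots, \mu_\ell)$, I would then check the partial-sum inequality $\sum_{j \leq i}(\lambda',\mu)_j \leq \sum_{j \leq i}(\lambda,\mu)_j$ separately for $i \leq m$ and for $i = m+k$ with $1 \leq k \leq \ell$. In the first range the inequality reduces to $\sum_{j \leq i}\lambda_j' \leq \sum_{j \leq i}\lambda_j$, which is exactly $\lambda' \trianglelefteq \lambda$. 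In the second range the common $\mu$-contribution $\sum_{j \leq k}\mu_j$ appears on both sides and cancels, leaving $\sum_{j \leq m}\lambda_j' \leq \sum_{j \leq m}\lambda_j$, which is the terminal instance of the dominance hypothesis.

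There is no real obstacle here; the argument is pure bookkeeping on partial sums. The only mildly delicate point to mention, but not to dwell on, is that the concatenation $(\lambda,\mu)$ is treated as a tuple at the level of the dominance order, and \cref{T:interpolation-implication} is exactly the bridge that converts this combinatorial order into the geometric implication we want.
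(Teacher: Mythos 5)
Your argument is correct and is exactly the deduction the paper intends: the corollary is stated without proof as an immediate consequence of \cref{T:interpolation-implication}, and your partial-sum bookkeeping (with the harmless zero-padding to align lengths) is precisely the omitted verification that $(\lambda',\mu) \trianglelefteq (\lambda,\mu)$.
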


\begin{corollary}[Breaking $\lambda$]
  If $a = b + c$ is a sum of non-negative integers and $\lambda$ is any tableau, then $(a, \lambda)$-interpolation implies $(b, c, \lambda)$-interpolation.
\end{corollary}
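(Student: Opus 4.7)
The plan is to deduce the statement from the interpolation criterion (\cref{T:interpolation-criterion}), which recasts $\lambda$-interpolation as a collection of rank bounds on evaluation maps. The key observation is that splitting the leading entry $a$ of $(a, \lambda)$ into $b + c$ strictly \emph{weakens} the partial-sum bounds required by the criterion, so a witness configuration for $(a, \lambda)$-interpolation extends to one for $(b, c, \lambda)$-interpolation simply by inserting one extra point.

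Write $\lambda = (\lambda_1, \ldots, \lambda_m)$ and suppose $E$ satisfies $(a, \lambda)$-interpolation. \cref{T:interpolation-criterion} produces distinct points $p_0, p_1, \ldots, p_m \in C_\sm$ at which the rank of $\H^0(E) \to \bigoplus_{j \leq i} E_{p_j}$ is at least $a + \lambda_1 + \cdots + \lambda_i$ for each $0 \leq i \leq m$. Since the set of points $p$ where $\H^0(E) \to E_p$ has rank $\geq c$ is open and contains $p_0$ (as $a \geq c$), I may select a further point $q \in C_\sm$, distinct from all $p_j$, at which the rank at $E_q$ is at least $c$. Relabel $q_0 = p_0$, $q_1 = q$, and $q_{k+1} = p_k$ for $1 \leq k \leq m$. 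The individual rank bounds $\geq b, c, \lambda_1, \ldots, \lambda_m$ at the $q_i$ now hold: at $q_0$ because the rank at $E_{p_0}$ is $\geq a \geq b$; at $q_1$ by construction; and at $q_{k+1} = p_k$ because the forward direction of \cref{T:interpolation-criterion} applied to the original witness ensures the rank at $E_{p_k}$ is $\geq \lambda_k$. The partial-sum bounds for $(b, c, \lambda)$ also follow: adjoining any extra summand can only increase the rank, and the sub-direct-sum $\bigoplus_{k \leq i} E_{p_k} \subset \bigoplus_{j \leq i+1} E_{q_j}$ already realizes the bound $a + \lambda_1 + \cdots + \lambda_i$ required at position $i + 1$ for $i \geq 0$, while at position $0$ the bound is just $b \leq a$. \cref{T:interpolation-criterion} then yields $(b, c, \lambda)$-interpolation.

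The only formal point that warrants attention is that $(b, c, \lambda)$ need not be weakly decreasing (for instance when $c < \lambda_1$), whereas the criterion is phrased for tableaux. This is routine bookkeeping: the underlying linear-algebra input \cref{T:algebra-lambdas} imposes no monotonicity hypothesis, so the rank inequalities established above already suffice to construct the desired modification directly. Alternatively, one may reorder $(b, c, \lambda)$ into a tableau $\mu$ and invoke \cref{T:interpolation-implication}, after checking $\mu \trianglelefteq (a, \lambda)$ via the standard majorization fact that splitting one part of a partition into two smaller pieces weakens the dominance order. Neither path presents a substantive obstacle.
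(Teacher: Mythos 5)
Your proof is correct and follows the route the paper intends: the corollary is stated as an immediate consequence of \cref{T:interpolation-criterion} (equivalently of \cref{T:interpolation-implication} after sorting), and your argument simply fills in that deduction, correctly noting that \cref{T:algebra-lambdas} needs no monotonicity so the unsorted tuple $(b,c,\lambda)$ causes no trouble. The only detail worth keeping is the one you already flagged — either verify $\mathrm{sort}(b,c,\lambda)\trianglelefteq(a,\lambda)$ or run the criterion directly — and both of your ways of handling it work.
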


\begin{remark}
  Unlike strong interpolation, weak interpolation is not equivalent its regular counterpart (see \cref{T:interpolation-distinct}). The precise statement is that a vector bundle $E$ satisfying weak interpolation also satisfies regular interpolation if and only if $\h^0(E) \equiv 0,1 \pmod{\rank E}$.
\end{remark}

\bibliographystyle{amsplain.bst}
\bibliography{Interpolation}

\providecommand{\bysame}{\leavevmode\hbox to3em{\hrulefill}\thinspace}
\providecommand{\MR}{\relax\ifhmode\unskip\space\fi MR }
\providecommand{\MRhref}[2]{%
  \href{http://www.ams.org/mathscinet-getitem?mr=#1}{#2}
}
\providecommand{\href}[2]{#2}
\begin{thebibliography}{10}

\bibitem{ACGH}
E.~Arbarello, M.~Cornalba, P.~A. Griffiths, and J.~Harris, \emph{Geometry of
  algebraic curves. {V}ol. {I}}, Grundlehren der Mathematischen Wissenschaften
  [Fundamental Principles of Mathematical Sciences], vol. 267, Springer-Verlag,
  New York, 1985. \MR{770932 (86h:14019)}

\bibitem{Beauville}
Arnaud Beauville, \emph{Complex algebraic surfaces}, second ed., London
  Mathematical Society Student Texts, vol.~34, Cambridge University Press,
  Cambridge, 1996, Translated from the 1978 French original by R. Barlow, with
  assistance from N. I. Shepherd-Barron and M. Reid. \MR{1406314 (97e:14045)}

\bibitem{Chen-Farkas-Morrison}
Dawei Chen, Gavril Farkas, and Ian Morrison, \emph{Effective divisors on moduli
  spaces of curves and abelian varieties}, A celebration of algebraic geometry,
  Clay Math. Proc., vol.~18, Amer. Math. Soc., Providence, RI, 2013,
  pp.~131--169. \MR{3114939}

\bibitem{Deligne-Mumford}
P.~Deligne and D.~Mumford, \emph{The irreducibility of the space of curves of
  given genus}, Inst. Hautes \'Etudes Sci. Publ. Math. (1969), no.~36, 75--109.
  \MR{0262240 (41 \#6850)}

\bibitem{Fedorchuk-thesis}
Maksym Fedorchuk, \emph{Severi varieties and the moduli space of curves},
  ProQuest LLC, Ann Arbor, MI, 2008, Thesis (Ph.D.)--Harvard University.
  \MR{2711678}

\bibitem{Griffiths-Harris}
Phillip Griffiths and Joseph Harris, \emph{Principles of algebraic geometry},
  Wiley Classics Library, John Wiley \& Sons Inc., New York, 1994, Reprint of
  the 1978 original. \MR{1288523 (95d:14001)}

\bibitem{Harris-Morrison-slopes}
J.~Harris and I.~Morrison, \emph{Slopes of effective divisors on the moduli
  space of stable curves}, Invent. Math. \textbf{99} (1990), no.~2, 321--355.
  \MR{1031904 (91d:14009)}

\bibitem{Harris-Montreal-notes}
Joe Harris, \emph{Curves in projective space}, S\'eminaire de Math\'ematiques
  Sup\'erieures [Seminar on Higher Mathematics], vol.~85, Presses de
  l'Universit\'e de Montr\'eal, Montreal, Que., 1982, With the collaboration of
  David Eisenbud. \MR{685427 (84g:14024)}

\bibitem{Hartshorne-Hirschowitz}
R.~Hartshorne and A.~Hirschowitz, \emph{Smoothing algebraic space curves},
  Algebraic geometry, {S}itges ({B}arcelona), 1983, Lecture Notes in Math.,
  vol. 1124, Springer, Berlin, 1985, pp.~98--131. \MR{805332 (87h:14023)}

\bibitem{Hartshorne}
Robin Hartshorne, \emph{Algebraic geometry}, Springer-Verlag, New York, 1977,
  Graduate Texts in Mathematics, No. 52. \MR{0463157 (57 \#3116)}

\bibitem{Johnsen-plane-projections}
Trygve Johnsen, \emph{Plane projections of a smooth space curve}, Parameter
  spaces ({W}arsaw, 1994), Banach Center Publ., vol.~36, Polish Acad. Sci.,
  Warsaw, 1996, pp.~89--110. \MR{1481483 (98g:14031)}

\bibitem{Keem-Kim}
C.~Keem and SeonJa Kim, \emph{Irreducibility of a subscheme of the {H}ilbert
  scheme of complex space curves}, J. Algebra \textbf{145} (1992), no.~1,
  240--248. \MR{1144672 (93a:14004)}

\bibitem{Keem}
Changho Keem, \emph{A remark on the {H}ilbert scheme of smooth complex space
  curves}, Manuscripta Math. \textbf{71} (1991), no.~3, 307--316. \MR{1103736
  (92f:14020)}

\bibitem{Ran}
Ziv Ran, \emph{Normal bundles of rational curves in projective spaces}, Asian
  J. Math. \textbf{11} (2007), no.~4, 567--608. \MR{2402939 (2009e:14091)}

\bibitem{Reid-surfaces}
Miles Reid, \emph{Chapters on algebraic surfaces}, Complex algebraic geometry
  ({P}ark {C}ity, {UT}, 1993), IAS/Park City Math. Ser., vol.~3, Amer. Math.
  Soc., Providence, RI, 1997, pp.~3--159. \MR{1442522 (98d:14049)}

\bibitem{Samuel}
P.~Samuel, \emph{Lectures on old and new results on algebraic curves}, Notes by
  S. Anantharaman. Tata Institute of Fundamental Research Lectures on
  Mathematics, No. 36, Tata Institute of Fundamental Research, Bombay, 1966.
  \MR{0222088 (36 \#5140)}

\end{thebibliography}

\end{document}